\def\R{\mathbb{R}}
\def\d{|\nabla|}
\def\Z{\mathbb{Z}}
\def\p{\partial}
\def\vo{\vspace{1\baselineskip}}
\def\be{\begin{equation}}
\def\ee{\end{equation}}
\newtheorem{theorem}{Theorem}[section]
\newtheorem{lemma}{Lemma}[section]
\newtheorem{proposition}{Proposition}[section]
\theoremstyle{definition}
\theoremstyle{remark}
\newtheorem{remark}{Remark}[section]
\numberwithin{equation}{section}
\begin{document}
 \title[$3D$ simplified RVM]{ Remarks on the Large data global solutions of $3D$ RVP  system and $3D$ RVM system}
\author{Xuecheng Wang}
\address{YMSC, Tsinghua  University, Beijing, China,  100084
}
\email{xuecheng@tsinghua.edu.cn,\quad xuechengthu@outlook.com 
}

\thanks{}
 
\maketitle
\begin{abstract}
 We show that  the simplified  $3D$ relativistic Vlasov-Maxwell (sRVM) system, in which there is no magnetic field, poses a global solution for a class of  arbitrarily large cylindrically symmetric initial data. In particular, a vanishing order condition imposed on the initial data  for the relativistic Vlasov-Poisson system (RVP) in \cite{wang2}  is not imposed for the  sRVM system. 
\end{abstract}
 
\section{Introduction} 

\subsection{Motivation}
The  $3D$ relativistic  Vlasov-Maxwell system is one of the   fundamental models in the collisionless plasma physics. It reads as follows, 
\be\label{mainequation}
(\textup{RVM})\qquad \left\{\begin{array}{c}
\p_t f + \hat{v} \cdot \nabla_x f + (E+ \hat{v}\times B)\cdot \nabla_v f =0,\\
\nabla \cdot E = 4 \pi \displaystyle{\int_{\R^3}  f(t, x, v) d v}, \qquad \nabla \cdot B =0, \\
\p_t E = \nabla \times B - 4\pi \displaystyle{\int_{\R^3} f(t, x, v) \hat{v} d v }, \quad \p_t B  =- \nabla\times E,\\
f(0,x,v)=f_0(x,v), \quad E(0,x)=E_0(x), \quad B(0,x)=B_0(x),
\end{array}\right.
\ee
where $f:\R_t\times \R_x^3\times \R_v^3\longrightarrow \R_{+}$ denotes the distribution function of particles, $E, B :\R_t\times \R_x^3 \longrightarrow \R^3 $ denote  the electromagnetic field,  $\hat{v}:=v/\sqrt{1+|v|^2}$.

In particular, if there is no magnetic field, then   the electric field is curl free. Hence, we know that $E=\nabla\phi$, where $\phi:\R_t\times \R_x^3\longrightarrow \R$. As a result, the RVM system is reduced to  the following simplified RVM (sRVM) system 
\be\label{sRVM}
(\textup{sRVM})\quad \left\{\begin{array}{c}
\p_t f + \hat{v} \cdot \nabla_x f + E \cdot \nabla_v f =0,\quad E=\nabla\phi \\
\Delta \phi = 4 \pi \displaystyle{\int_{\R^3}  f(t, x, v) d v},\qquad
\p_t\nabla_x\phi =  - 4\pi \displaystyle{\int_{\R^3} f(t, x, v) \hat{v} d v }, \quad 
f(0,x,v)=f_0(x,v). 
\end{array}\right. 
\ee
The following conservation law holds for the above system, 
\be\label{conservationlaw}
\mathcal{H}(t):= \int_{\R^3} \int_{\R^3} \sqrt{1+|v|^2} f(t,x,v) dx d v + \int_{\R^3} |\nabla_x \phi(t,x)|^2 dx = \mathcal{H}(0).
\ee

We remark that, due to the divergence free condition of the magnetic field,  for any smooth localized radial initial data,  the solution of the  RVM system (\ref{mainequation}) must have zero magnetic field.

As comparison, the following $3D$ relativistic Vlasov-Poisson system (RVP) for the plasma physics case  is also widely studied in the literature. 
 \be\label{vlasovpo}
(\textup{RVP})\quad \left\{\begin{array}{c} 
\p_t f + \hat{v} \cdot \nabla_x f + E \cdot \nabla_v f =0,\quad E=\nabla\phi \\
\Delta \phi = \rho(t), \quad \rho(t):= \displaystyle{\int_{\R^3} f(t,x,v) d v},\quad f(0,x,v)=f_0(x,v). \\ 
\end{array}\right. 
\ee

Note that the RVP is a determined system  while sRVM system is over-determined in general.  However,  for the radial case,  these two systems are equivalent. 

 The properties of the solution of the  RVP system are shared with the solution of the sRVM system, however not the other way around. For example, the solution of sRVM system (\ref{sRVM}) enjoys the following property over time, 
\[
\int_{\R^3} \hat{v}\times \nabla_x f(t,x,v) d v =0,   
\]
which is  not necessarily true for the solution of the RVP system in general. But, indeed, it's true for the radial case. 

In \cite{wang2}, we show the global existence   of $3D$ RVP system (\ref{vlasovpo})   for a class of arbitrarily large   initial data with cylindrical symmetry and an extra vanishing order condition on the planar momentum. More precisely, the following condition is imposed on the initial data, 
\[
\textup{(a vanishing order condition for RVP)}\qquad \int_{\R^3}\int_{\R^3} \frac{1}{|\slashed x \times \slashed v|^{13}} f_0(x,v) d x d v  < +\infty.
\]

In \cite{wang2}, we exploit the benefit of the above condition via  the conservation law of planar momentum of the $3D$ RVP system for the cylindrical symmetry. The the conservation law of planar momentum is available is mainly because of the cylindrical symmetry assumption and the gradient structure of the electric field. However, the gradient structure of the electric field is no longer available for the RVM system (\ref{mainequation}). We don't expect such conservation law for the RVM system. 

One of the main goals of this paper is to show that, thanks to the extra equation in sRVM system (\ref{sRVM}),  the difficulty of removing the vanishing order condition for the RVP system is absent in  the  sRVM system as well as in  the RVM system, see \cite{wang3}.  In this paper, we will not use the conservation law of the planar momentum of any kind to show large data global existence of the sRVM system (\ref{sRVM}). Instead, we exploit the hyperbolic nature of the sRVM system.

We also reveal some properties  shared commonly by the RVP system and the sRVM system, e.g., the estimate (\ref{projest}) in Lemma \ref{horprojlemma}. We hope that it can shine some light on the further study of the large data problem of the RVP system (\ref{vlasovpo}).  

Moreover, as a toy model for the RVM system (\ref{mainequation}), the framework of proving global existence used in this paper is same as the framework used in a  more complicated study of the RVM system   \cite{wang3}.  For  pedagogical purpose, we  also hope that this paper also serves as an introduction for more complicated and more technical full study of the RVM system in \cite{wang3}.

\subsection{Main result of this paper}

The main result of this paper is stated as follows. 
 
\begin{theorem}\label{maintheorem}
Assume that the   initial data  $  f_0(x,v)\in H^s(\R_x^3 \times \R_v^3)$, $s\in \mathbb{Z}_{+},s\geq 6$ is cylindrical symmetric  in the sense that the following equality holds for any $x,v\in \R^3$, 
\be\label{april25eqn1}
f_0(x_1,x_2,x_3,v_1,v_2,v_3)= f_0(|\slashed{x}|,0,x_3,|\slashed{v}|,0,v_3),\quad  \slashed{x}:=(x_1,x_2), \quad  \slashed{v}:=(v_1,v_2). 
\ee
Moreover,   we assume that    the initial data decays polynomially as $(x,v)\longrightarrow \infty$ in the following sense,   
\be\label{assumptiononinitialdata}
\sum_{\alpha \in \mathbb{Z}_{+}^6,|\alpha|\leq s} \|(1+| x|+|v|)^{N_0}\nabla_{x,v}^\alpha f_0(x,v)\|_{L^2_{x,v}}< +\infty, \quad  N_0:= 10^{8},
\ee
then the simplified  relativistic Vlasov-Maxwell system \textup{(\ref{sRVM})} admits a global solution $ (f(t,\cdot, \cdot) )\in  H^s(\R_x^3 \times \R_v^3)$. 

\end{theorem}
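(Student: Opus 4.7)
The plan is to run a standard local-existence-plus-bootstrap scheme. Local well-posedness in $H^s$ for $s\geq 6$ is classical, so the main task is to establish, on any finite time interval $[0,T]$, uniform a priori bounds for three quantities: the weighted $H^s$ norm of $f$ as controlled by (\ref{assumptiononinitialdata}), the growth of the velocity support along characteristics, and the $L^\infty$ norms of $E$ and its spatial derivatives. Cylindrical symmetry in (\ref{april25eqn1}) is propagated by the sRVM flow, so I would keep the full Cartesian formulation but commute throughout with the symmetry generator $x_1\p_{x_2}-x_2\p_{x_1}+v_1\p_{v_2}-v_2\p_{v_1}$, reducing pointwise estimates to the four effective variables $(|\slashed x|,|\slashed v|,x_3,v_3)$. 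The conservation identity (\ref{conservationlaw}) provides the baseline $L^2_x$ bound for $\n\phi$ and the uniform bound for the relativistic kinetic energy.

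The central structural observation, and the reason the vanishing-order condition of \cite{wang2} can be dropped, is that the extra equation in (\ref{sRVM}) can be integrated directly in time:
\[
E(t,x) = E(0,x) - 4\pi \int_0^t \int_{\R^3} f(s,x,v)\hat v dv ds.
\]
This representation is consistent with the Poisson equation because Vlasov automatically yields the continuity equation $\pt \rho + \n\cdot J = 0$, where $J:=\int f\hat v dv$. Thus $E$ is no longer only an elliptic functional of the instantaneous density; it becomes a spacetime integral of the current, converting pointwise field bounds into dispersive/bilinear estimates on $J$. Since $\hat v$ is a nondegenerate (bounded) function of $v$, the $\hat v$-integration produces genuine hyperbolic (dispersive) gain, which is what the introduction calls the hyperbolic nature of sRVM, and which is unavailable to RVP.

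With these tools in hand, the bootstrap would assume polynomial-in-$t$ upper bounds on $\|E\|_{L^\infty_x}$, on the weighted Sobolev norm of $f$, and on the size of the velocity support. From these, I would (a) bound pointwise moments of $f$ in terms of the velocity-support hypothesis, (b) plug them into the integral representation of $E$ and use the horizontal projection estimate (\ref{projest}) of Lemma \ref{horprojlemma} combined with the cylindrical symmetry to improve the $L^\infty_x$ bound on $E$ and its derivatives, (c) recover the velocity-support bound by integrating the characteristic ODE $\dot v = E$ with the improved $E$, and (d) close the top-order weighted $H^s$ estimate via energy estimates with weight $(1+|x|+|v|)^{N_0}$, commuting spatial and velocity derivatives through the transport operator and absorbing commutators using the lower-order bootstrap hypotheses.

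The main obstacle will be step (b). One needs an $L^\infty_x$ bound on $E$ that is subcritical in $t$ coming purely from the time integral of the current. In the RVP case \cite{wang2} this was obtained through the planar-momentum conservation law, which forced the integrability assumption on $|\slashed x\times \slashed v|^{-13}f_0$; here one must extract the equivalent decay from the hyperbolic structure and cylindrical symmetry alone. The argument will need a careful bilinear/stationary-phase analysis of the spacetime integral defining $E$: particles with large $|v|$ disperse in $\hat v$ and contribute a small-in-$t^{-1}$ piece, while particles with bounded $|v|$ produce a spatially localized contribution controllable by the weighted Sobolev norms. The very large choice $N_0 = 10^8$ in (\ref{assumptiononinitialdata}) is precisely the weight budget needed to propagate enough regularity through the many commutators demanded by this argument and to absorb the loss of derivatives incurred when differentiating the integral representation of $E$ along characteristics.
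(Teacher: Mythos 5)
Your overall skeleton (local existence, bootstrap on moments/velocity support, then a continuation criterion once $E\in L^\infty$) matches the paper's outer logic, but the central mechanism you propose does not hold up, and it is exactly at the point you yourself flag as ``the main obstacle.'' You claim that the hyperbolic nature of (\ref{sRVM}) is captured by integrating the extra equation in time, $E(t,x)=E(0,x)-4\pi\int_0^t\int f(s,x,v)\hat v\,dv\,ds$, and that the $\hat v$-integration then yields a dispersive/stationary-phase gain. That identity is true but inert: the time integral of the current carries no oscillatory factor, so there is no phase to be stationary in, and velocity averaging applied to $\int_0^t J\,ds$ gives nothing like the subcritical $L^\infty_x$ bound on $E$ you need in step (b). What the paper actually does is derive a genuine wave equation for the frequency-localized field $\tilde E_{k,\tilde k}$ (combining $\Delta\phi=4\pi\rho$ with $\p_t\nabla\phi=-4\pi J$, as in (\ref{june10eqn66})), solve it by Duhamel with the half-wave operators $e^{\pm it\d}$ and Kirchhoff's formula (Lemma \ref{Kirchhoff}), and then integrate by parts in time along the characteristic using the non-resonant phase $|\xi|+\hat V(s)\cdot\xi$; the gain comes from this oscillation, not from the time-integral representation. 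That is the content of Lemma \ref{bootstraplemma2}, and nothing in your proposal substitutes for it.

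A second, related gap: you invoke the projection estimate (\ref{projest}) of Lemma \ref{horprojlemma} as an available tool, but in the paper that lemma is itself half of the work (normal form in time, angular decompositions relative to $\theta_{V(s)}(v)$, and kernel estimates), and more importantly it is only useful inside the two-tier bootstrap on the characteristics of the $t$-majority set: first control the horizontal momentum exponent $\alpha_t\leq 7/10+\iota$, then feed that into the wave-equation lemma to control the full momentum exponent $\beta_t\leq 1-2\epsilon$, and only then do the moments $\tilde M_n(t)$ grow polynomially. Your bootstrap makes no distinction between $|\slashed V|$ and $|V|$, and it has no analogue of the singular weighted space-time estimate (\ref{jan12eqn36}), which is what controls particles near the symmetry axis where the cylindrical reduction degenerates --- in \cite{wang2} that is precisely where the vanishing-order condition was needed, so any proof removing it must supply a replacement mechanism there. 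Finally, the top-order weighted $H^s$ energy estimate you plan in step (d) is unnecessary for the paper's route (the Luk--Strain continuation criterion only needs the moment/field bounds), so the effort is misplaced relative to the real difficulty. As written, the proposal asserts the conclusion of Lemmas \ref{horprojlemma} and \ref{bootstraplemma2} without a workable path to either.
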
 

A few remarks are in order. 

\begin{remark}
Although, as stated in (\ref{april25eqn1}),   the initial data is cylindrical symmetric with respect to the  $x_3$-axis, by changing coordinate  system, our main theorem also holds for  initial data  with   cylindrical symmetry with  respect to any  line  that goes through the origin. 
\end{remark}

\begin{remark}
The plausible  goal of optimizing the size of $N_0, s $ is not pursued here. 
\end{remark}

\subsection{Notation and plan of this paper}
 For any two numbers $A$ and $B$, we use  $A\lesssim B$ and $B\gtrsim A$  to denote  $A\leq C B$, where $C$ is an absolute constant. We use the convention that all constants which only depend on the \textbf{initial data}, e.g., the conserved quantities in  (\ref{conservationlaw}),  will be treated as absolute constants.  

   We  fix an even smooth function $\tilde{\psi}:\R \rightarrow [0,1]$, which is supported in $[-3/2,3/2]$ and equals to ``$1$'' in $[-5/4, 5/4]$. For any $k,j\in \mathbb{Z}, j> 0$, we define the cutoff functions $\psi_k, \psi_{\leq k}, \psi_{\geq k}:\cup_{n\in \Z_+}\R^n\longrightarrow \R$ as follows, 
\[
\psi_{k}(x) := \tilde{\psi}(|x|/2^k) -\tilde{\psi}(|x|/2^{k-1}), \quad \varphi_0(x):=  \tilde{\psi}(|x|), \quad\forall j\in (0, \infty)\cap \Z,\quad  \varphi_j(x):=\psi_j(x) .
\]

For any $x\in \R^3/\{0\}, u\in \R^3$, we use $\tilde{x}:=x/|x|$ to denote the direction of the vector $x$ and use $\slashed u$ to denote the projection of $u$ onto the $x_1x_2$-plane. For an integrable function $f(x)$, we use both $\widehat{f}(\xi)$ and $\mathcal{F}(f)(\xi)$ to denote the Fourier transform of $f$, which is defined as follows,
\[
\mathcal{F}(f)(\xi)= \int e^{-ix \cdot \xi} f(x) d x.
\]

We use $\mathcal{F}^{-1}(g)$ to denote the inverse Fourier transform of $g(\xi)$.

\vo

The rest of this paper is organized as follows. 

\begin{enumerate}
\item[$\bullet$] In section \ref{preli}, we introduce the set-up of problem  and  record some tools developed in \cite{wang2}. 
\item[$\bullet$] In section \ref{proofmain}, under the assumption of the validity of Lemma \ref{horprojlemma} and the Lemma \ref{bootstraplemma2}, we control both the projection of the velocity characteristics onto the horizontal plane and the full velocity characteristics that start from the $t$-majority set $R_t(0)$, see (\ref{may10majority}). With strong control of $\beta_t$, see Proposition \ref{controlchar},  we control the high momentum of the distribution function over time and prove  theorem \ref{maintheorem}.

\item[$\bullet$] In section \ref{mainlemma1}, we prove  Lemma \ref{horprojlemma}. 

\item[$\bullet$] In section \ref{mainlemma2}, by exploiting the hyperbolic nature of the  sRVM system (\ref{sRVM}),  we prove  Lemma \ref{bootstraplemma2}. 
\end{enumerate}

\vo 

\noindent \textbf{Acknowledgment}\qquad The author acknowledges  support from  NSFC-11801299, 12141102, and MOST-2020YFA0713003. 

\section{Preliminary}\label{preli}

\subsection{The set-up}
We use the classic moment method, which is propagating a    high order moment of the distribution function over time, to control directly the electric field over time, see Lemma \ref{roughesti}. We define 
 \be\label{may2eqn1}
M_{ n}(t ):= \int_{\R^3} \int_{\R^3} (1+ |v|)^n f(t,x,v) d v d x,\quad    \tilde{M}_n(t) := (1+t)^{n^2}+\sup_{s\in[0,t]} M_n(s) , \quad n:= N_0/10=10^7. 
 \ee
Therefore $\tilde{M}_n(t) $ is an increasing function with respect to time.

Let $T\in \R_+$ be the maximal time of existence.   For any fixed $t\in[0,T]$, we let $M_t \in \mathbb{Z}_+$ to be the least integer such that $2^{M_t}\geq (\tilde{M}_n(t) )^{1/  (n-1 )}$.  Moreover, it would be sufficient to only consider the case that $M_t\gg 1 $ otherwise  the momentum $\tilde{M}_n(t)$ is naturally bounded by an absolute constant by definition.  

From (\ref{sRVM}), the backward characteristics associated with the sRVM system  read  as follow, 
\be\label{characteristicseqn}
\left\{\begin{array}{l}
\displaystyle{\frac{d}{ds} X(x,v,s,t) = \hat{V}(x,v,s,t)}, \quad 
\displaystyle{\frac{d}{ds} V(x,v, s,t) = E(s, X(x,v,s,t))}\\ 
\\ 
X(x,v,t,t)=x, \quad V(x,v,t,t)=v.\\ 
\end{array}\right. 
\ee
Due to the transport nature of the Vlasov equation, see (\ref{sRVM}),  for any $s, t\in [0, T^{  })$, we have
\be\label{conservation}
f(t,x,v) = f(s, X(x,v,s,t), V(x,v,s,t)).
\ee

 For any $t\in [0, T^{  })$,   we define  $t$-majority set  of particles, which initially localize around zero, at time $s\in [0, T^{  })$ as follows,  
\be\label{may10majority}
R_t(s):= \{(X(x,v,s,0),V(x,v,s,0)): |V (x,v,0,0)|+| X(x,v,0,0) |\leq  2^{ M_t/2}  \}.
\ee
Moreover, we define 
\begin{multline}\label{may9en21}
\alpha_t(s,x,v):=  \sup_{\tau \in[0,s] } \inf\{ k: k\in \R_+, |  \slashed V(x,v,\tau,0)| \leq 2^{k M_t  }   \}  ,\quad \alpha_t(s):=\sup_{(x,v)\in R_t(0)} \alpha_t(s,x,v), \\ 
\beta_t(s, x,v):= \sup_{\tau\in[0,s] } \inf\{ k: k\in \R_+, | V(x,v,\tau,0)|\leq 2^{k M_t}  \}, \quad \beta_t(s):=\sup_{(x,v)\in R_t(0)} \beta_t(s,x,v), \\ 
 \epsilon:=100/n=10^{-5}, \quad \alpha_t:=\alpha_t(t), \quad  \tilde{\alpha}_t=\min\{\alpha_t, 1+\epsilon \},\quad  \beta_t:=\beta_t(t), \quad  \tilde{\beta}_t=\min\{\beta_t, 1+\epsilon \}.
\end{multline}
From the above definition, we know that 
\be\label{2021dec18eqn1}
\forall t\in [0, T^{  }), s\in [0, t], \quad R_s(0)\subset R_t(0), \quad  \alpha_s M_s \leq \alpha_t(s) M_t\leq \alpha_t M_t, \quad \beta_s M_s\leq \beta_t(s) M_t\leq \beta_t M_t. 
\ee

Note that
\[
\forall s\in[0, t], x, v \in \R^3, \quad X( X(x,v,0,s), V(x,v,0,s),s,0)= x, \quad V( X(x,v,0,s), V(x,v,0,s),s,0)= v. 
\]
Therefore, from the above definition,  for any fixed  $s\in [0, t],  x \in \R^3, v\in \R^3$, s.t., either $|\slashed v|\geq 2^{(\alpha_s+\epsilon) M_s }$ or $| v|\geq 2^{(\beta_s+\epsilon ) M_s}$, we have
\be\label{nov24eqn27}
 (X(x,v,0,s), V(x,v,0,s))\notin R_s(0). 
\ee

 Thanks to the rapid polynomial decay rate of the initial data, see (\ref{assumptiononinitialdata}),  to control the high moments       $ \tilde{M}_n(t) $, it would be sufficient    to   control the majority  set over time. More precisely, from the assumption on the initial data in (\ref{assumptiononinitialdata}), the following estimate holds for any $s\in [0, t],  x,v\in \R^3$, s.t., either $|\slashed v|\geq 2^{ (\alpha_s+\epsilon) M_s}$ or $| v|\geq 2^{ (\beta_s+\epsilon) M_s}$, 
 \be\label{outsidemajority}
 \big|f(s,x,v)\big| = \big|f_0( X(x,v,0,s),V(x,v,0,s) )\big|\lesssim 2^{-4 n M_s}. 
 \ee

To exploit the benefit of the hyperbolic natural of the sRVM system, we will use the following Kirchhoff's formula. 

 \begin{lemma}[Kirchhoff's formula]\label{Kirchhoff}
 For any $t\in \R,x\in \R^3$, the following equality holds, 
\be\label{dec20eqn1}
\d^{-1}   \sin( t\d ) h( x)  = \frac{1}{4\pi}   t  \int_{\mathbb{S}^2} h(x+t \theta) d \theta,
\ee
\be\label{march4eqn100}
\d^{-1}   \cos(t\d) h(x)= \frac{1}{4\pi}     \int_{\mathbb{S}^2}  \d^{-1}  h(x+t \theta) d \theta     + \frac{1}{4\pi}     \int_{\mathbb{S}^2} t \theta\cdot \frac{\nabla}{\d} h(x+t \theta) d \theta. 
\ee
\end{lemma}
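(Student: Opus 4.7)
The plan is to reduce both formulas to a single scalar Fourier identity and then pass back to physical space by the inverse Fourier transform. Specifically, I would aim to establish that for any $\xi \in \R^3$,
\[
\frac{\sin(t|\xi|)}{|\xi|} = \frac{t}{4\pi} \int_{\mathbb{S}^2} e^{it\theta \cdot \xi}\, d\theta.
\]
Multiplying both sides by $\hat{h}(\xi)$ and taking the inverse Fourier transform then yields (\ref{dec20eqn1}) at once, since $\mathcal{F}^{-1}(e^{it\theta \cdot \xi} \hat{h}(\xi))(x) = h(x+t\theta)$ and the sphere integration may be exchanged with $\mathcal{F}^{-1}$ for $h$ in the Schwartz class.

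To verify the scalar identity, I would exploit the rotation invariance of the surface measure on $\mathbb{S}^2$ to rotate so that the ``north pole'' is aligned with $\xi/|\xi|$. Parametrizing by the polar angle $\phi\in[0,\pi]$ and setting $u=\cos\phi$ reduces the integral to
\[
\int_{\mathbb{S}^2} e^{it\theta \cdot \xi}\, d\theta = 2\pi \int_{-1}^1 e^{it|\xi|u}\, du = \frac{4\pi \sin(t|\xi|)}{t|\xi|},
\]
which rearranges to the claimed formula. This takes care of the sine identity (\ref{dec20eqn1}).

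For the cosine identity (\ref{march4eqn100}), I would simply differentiate (\ref{dec20eqn1}) in $t$. On the Fourier side, $\pt\bigl(\sin(t|\xi|)/|\xi|\bigr) = \cos(t|\xi|)$, so differentiating the spherical-average side via the product and chain rules gives
\[
\cos(t\d) h(x) = \frac{1}{4\pi} \int_{\mathbb{S}^2} h(x+t\theta)\, d\theta + \frac{t}{4\pi} \int_{\mathbb{S}^2} \theta \cdot \n h(x+t\theta)\, d\theta.
\]
Substituting $h \mapsto \d^{-1} h$ and commuting $\n$ past $\d^{-1}$ (both are Fourier multipliers) produces exactly (\ref{march4eqn100}).

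There is no serious obstacle here; the proof is elementary Fourier calculus. The only mild technical point is justifying the interchange of the sphere integration with both the inverse Fourier transform and the $t$-derivative, which is routine for Schwartz data and extends to the $H^s$ regularity class relevant to the paper by density.
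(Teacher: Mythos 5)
Your proposal is correct and follows essentially the same route as the paper: the paper also computes $\int_{\mathbb{S}^2} e^{it\xi\cdot\theta}\,d\theta = \frac{4\pi\sin(t|\xi|)}{t|\xi|}$ by the polar-angle reduction and then obtains (\ref{march4eqn100}) by differentiating (\ref{dec20eqn1}) in $t$. Your extra remarks (applying the result to $\d^{-1}h$ and justifying the interchanges for Schwartz data) merely spell out details the paper leaves implicit.
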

\begin{proof}
Note that 
\be\label{march4eqn41}
 \int_{\R^3} e^{-i   x\cdot \xi } \int_{\mathbb{S}^2} h(x+t\theta) d \theta d x = \int_{\mathbb{S}^2 } e^{i  t \xi \cdot \theta} \hat{h}(\xi) d\theta=  2\pi \hat{h}(\xi)  \int_{0}^{\pi}e^{it|\xi|\cos(\phi)} \sin(\phi) d \phi =  \frac{4\pi \sin(  t |\xi|)}{t|\xi|}  \hat{h}(\xi)  .
\ee
 Hence finishing the proof of the desired formula (\ref{dec20eqn1}). Our desired equality (\ref{march4eqn100}) holds after taking derivative with respect to ``$t$'' for  the equality (\ref{dec20eqn1}).
\end{proof}

\subsection{Rough estimates for the electric field}
Let  $E_k(\cdot, \cdot)=P_k(E)(\cdot, \cdot)$ denotes the frequency localized electric field.  Moreover, we localize further the sizes of $\slashed v $ and $v$ for the frequency localized electric field as follows,  
 \begin{multline}\label{june9eqn2} 
 E_k(t,x)= \sum_{j_2\in \Z_+, j_1\in [0, j_2+2]\cap \Z_+}  E_{k;j_1,j_2}(t,x),\\ 
 E_{k;j_1,j_2}(t,x)= \int_{\R^3} \int_{\R^3} K_k(y) f(t,x-y, v) \varphi_{j_1}(\slashed v)\varphi_{j_2}(v) d y d v,\quad  K_k(y)= \int_{\R^3} \int_{\R^3} e^{i y\cdot \xi}\frac{-i\xi}{|\xi|^2} \varphi_{k}(\xi)   d \xi. 
 \end{multline}
After doing integration by parts in $\xi$ many times, the following estimate holds for the kernel  $K_k(y)$, 
\be\label{june9eqn21}
 \big|K_k(y)\big|\lesssim 2^{2k}(1+2^{k}|y|)^{-N_0^3}. 
\ee

The following singular weighted space-time estimate controls strongly the distribution of particles near the $x_3$-axis. 
\begin{lemma}\label{spacetimeest}
Let $\epsilon^{\star}:=\epsilon/100.$ For any $t\in [0, T^{})$, the following weighted space-time estimate holds,
 \be\label{jan12eqn36}
 A(t):= \int_0^t\int_{\R^3} \int_{\R^3}  \frac{| \slashed v|^{2+2\epsilon^{\star}}}{|\slashed x|^{1-2\epsilon^{\star}}\langle v \rangle }    f(s,x,v)  d x  d v d s \lesssim  2^{5\epsilon M_t}.
 \ee
 As a by-product, the following $L^1_t-L^\infty_x$-type estimate holds for any $t_1, t_2\in[0,t]$, 
 \be\label{june9eqn10}
\int_{t_1}^{t_2} \| E_{k;j_1,j_2}(t,\cdot )\|_{L^\infty_x} d  t \lesssim 2^{2\epsilon M_t} + 2^{k-2j_1+j_2+6\epsilon M_t}. 
 \ee

\end{lemma}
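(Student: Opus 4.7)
\noindent\emph{Proof proposal.} The plan is to prove the weighted space-time estimate (\ref{jan12eqn36}) via a Morawetz-type virial identity with a suitably chosen multiplier, and then to extract the $L^1_t L^\infty_x$ estimate (\ref{june9eqn10}) by combining the weighted bound with the kernel decay (\ref{june9eqn21}).

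For (\ref{jan12eqn36}) I would use the multiplier
\[
\Psi(x,v) := \frac{(\slashed{x}\cdot \slashed{v})\,|\slashed{v}|^{2\epsilon^{\star}}}{|\slashed{x}|^{1-2\epsilon^{\star}}}.
\]
A direct calculation, together with the Cauchy--Schwarz bound $(\slashed{x}\cdot\slashed{v})^2 \leq |\slashed{x}|^2|\slashed{v}|^2$ applied to the cross term, gives
\[
\hat{v}\cdot\nabla_x \Psi = \frac{|\slashed{v}|^{2\epsilon^{\star}}}{|\slashed{x}|^{1-2\epsilon^{\star}}\langle v\rangle}\left[|\slashed{v}|^2 - (1-2\epsilon^{\star})\frac{(\slashed{x}\cdot\slashed{v})^2}{|\slashed{x}|^2}\right] \;\geq\; 2\epsilon^{\star}\,\frac{|\slashed{v}|^{2+2\epsilon^{\star}}}{|\slashed{x}|^{1-2\epsilon^{\star}}\langle v\rangle},
\]
which is precisely the integrand of $A(t)$. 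Because the transport operator in (\ref{sRVM}) is $(x,v)$-divergence-free, testing the Vlasov equation against $\Psi$ and integrating in $(x,v,s)\in\mathbb{R}^3\times\mathbb{R}^3\times[0,t]$ yields
\[
2\epsilon^{\star} A(t) \;\leq\; \Big|\int\!\!\int \Psi f(t)\,dx\,dv\Big| + \Big|\int\!\!\int \Psi f_0\,dx\,dv\Big| + \int_0^t\!\!\int\!\!\int |E\cdot \nabla_v \Psi|\,f\,dx\,dv\,ds.
\]
The boundary terms are estimated via $|\Psi|\leq |\slashed{x}|^{2\epsilon^{\star}}|\slashed{v}|^{1+2\epsilon^{\star}}$, the characteristic bound $|X(x,v,s,0)|\leq |x|+s$ coming from $|\hat{v}|\leq 1$ in (\ref{characteristicseqn}), the conservation relation (\ref{conservation}), the polynomial decay of $f_0$ from (\ref{assumptiononinitialdata}), and the definition of $M_t$; together they contribute at most $2^{C\epsilon M_t}$ for an absolute constant $C$. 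For the error term, $|\nabla_v\Psi|\lesssim |\slashed{x}|^{2\epsilon^{\star}}|\slashed{v}|^{2\epsilon^{\star}}$, so I rewrite it as $\int |E(s,x)|\,|\slashed{x}|^{2\epsilon^{\star}}\rho_{2\epsilon^{\star}}(s,x)\,dx$ with $\rho_{2\epsilon^{\star}}(s,x):=\int|\slashed{v}|^{2\epsilon^{\star}}f\,dv$, apply Cauchy--Schwarz in $x$, bound $\|E(s,\cdot)\|_{L^2}$ by the Hamiltonian conservation (\ref{conservationlaw}), and control the weighted $L^2_x$-norm of $\rho_{2\epsilon^{\star}}$ by the high moment $\tilde{M}_n(t)$ from (\ref{may2eqn1}) via Lemma \ref{roughesti}. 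Since $\epsilon^{\star}=\epsilon/100$ is small, all factors combine to $A(t)\lesssim 2^{5\epsilon M_t}$.

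For the by-product (\ref{june9eqn10}), I start from (\ref{june9eqn2}) and insert the weight using $|\slashed{v}|^{2+2\epsilon^{\star}}/\langle v\rangle \sim 2^{(2+2\epsilon^{\star})j_1-j_2}$ on the support of $\varphi_{j_1}(\slashed{v})\varphi_{j_2}(v)$:
\[
|E_{k;j_1,j_2}(t,x)| \;\lesssim\; 2^{-2j_1+j_2}\!\int\!\!\int |K_k(x-y)|\,|\slashed{y}|^{1-2\epsilon^{\star}}\cdot \frac{|\slashed{v}|^{2+2\epsilon^{\star}}}{|\slashed{y}|^{1-2\epsilon^{\star}}\langle v\rangle}\,f(t,y,v)\varphi_{j_1}\varphi_{j_2}\,dy\,dv.
\]
I split the $y$-integral into the regions $\{|\slashed{y}|\leq 2|\slashed{x}-\slashed{y}|\}$ and its complement. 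In the first region, $|\slashed{y}|^{1-2\epsilon^{\star}}\leq 2^{1-2\epsilon^{\star}}|x-y|^{1-2\epsilon^{\star}}$, which is absorbed by (\ref{june9eqn21}) to produce the factor $2^{(1+2\epsilon^{\star})k}(1+2^k|x-y|)^{-N_0^3+1-2\epsilon^{\star}}$; after $t$-integration this piece is bounded by $2^{k-2j_1+j_2}A(t)\lesssim 2^{k-2j_1+j_2+5\epsilon M_t}$ via (\ref{jan12eqn36}). In the complementary region $|\slashed{y}|$ is comparable to $|\slashed{x}|$ and the singular weight $1/|\slashed{y}|^{1-2\epsilon^{\star}}$ gives no gain; there I use a cheap pointwise estimate of the form $|E_{k;j_1,j_2}|\lesssim \|K_k\|_{L^\infty}\|\rho_{j_1,j_2}\|_{L^1}\lesssim 2^{2k-j_2 n}\tilde{M}_n(t)$, optimized against a Young-type bound, producing the $2^{2\epsilon M_t}$ term. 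The hardest part will be the book-keeping in the kernel decomposition for (\ref{june9eqn10}) and verifying that the Cauchy--Schwarz estimate of the error term in the virial identity is truly subcritical in $\epsilon^{\star}$; the remaining arithmetic is routine.
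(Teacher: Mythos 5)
Your multiplier computation is fine: with $\Psi=(\slashed x\cdot\slashed v)\,|\slashed v|^{2\epsilon^{\star}}|\slashed x|^{-(1-2\epsilon^{\star})}$ one indeed gets $\hat v\cdot\nabla_x\Psi\geq 2\epsilon^{\star}|\slashed v|^{2+2\epsilon^{\star}}|\slashed x|^{-(1-2\epsilon^{\star})}\langle v\rangle^{-1}$, and the boundary terms can be handled as you indicate (using $t\lesssim 2^{\epsilon M_t/100}$, the spatial decay of $f_0$, and interpolation of $\int\!\!\int\langle v\rangle^{1+2\epsilon^{\star}}f$ between the conserved energy and $\tilde M_n$). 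Note the paper does not prove this lemma itself; it cites \cite{wang2}[Proposition 3.1], so the comparison is with what any correct proof must achieve. The genuine gap is your treatment of the commutator term $\int_0^t\!\!\int\!\!\int |E\cdot\nabla_v\Psi|\,f\,dx\,dv\,ds$. Cauchy--Schwarz in $x$ with $\|E(s,\cdot)\|_{L^2_x}\lesssim 1$ forces you to bound $\|\,|\slashed x|^{2\epsilon^{\star}}\rho_{2\epsilon^{\star}}(s,\cdot)\|_{L^2_x}$, and an $L^2_x$ bound on a velocity average requires roughly the third velocity moment: $\|\rho_{2\epsilon^{\star}}(s,\cdot)\|_{L^2_x}\lesssim \|f\|_{L^\infty_{x,v}}^{1/2}\big(\int\!\!\int (1+|v|)^{3+}f\big)^{1/2}\lesssim \big(\tilde M_n(t)\big)^{3/(2n)}\sim 2^{3M_t/2}$. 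No interpolation against $\tilde M_n$ can make this small, since $\epsilon=100/n$: the resulting bound is of size $2^{cM_t}$ with $c$ of order one, vastly larger than the target $2^{5\epsilon M_t}=2^{5\cdot 10^{-5}M_t}$, and using the rough $L^\infty_x$ bounds on $E$ from Lemma \ref{roughesti} instead costs $2^{(5/3+)M_t}$ and fails for the same reason. (Lemma \ref{roughesti} also does not give weighted $L^2_x$ control of densities, so the step as written is not even well-posed.)

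What is actually needed here, and what you discard by taking absolute values, is the gradient/repulsive structure of $E$: writing $E(s,x)=\int (x-y)|x-y|^{-3}\rho(s,y)\,dy$, the leading part of $\int\!\!\int (E\cdot\nabla_v\Psi) f\,dx\,dv$ is $\int E\cdot \slashed x\,|\slashed x|^{-(1-2\epsilon^{\star})}\rho_{2\epsilon^{\star}}\,dx$, and after symmetrizing in $x\leftrightarrow y$ the kernel $\big(\slashed x|\slashed x|^{-(1-2\epsilon^{\star})}-\slashed y|\slashed y|^{-(1-2\epsilon^{\star})}\big)\cdot(\slashed x-\slashed y)\,|x-y|^{-3}\geq 0$ by monotonicity of $z\mapsto z|z|^{-(1-2\epsilon^{\star})}$, so this contribution enters with a favorable sign in the repulsive case; the residual error caused by the mismatch between $\rho_{2\epsilon^{\star}}$ and $\rho$ is what a careful proof (as in \cite{wang2}) must control, and it is the only place a small power $2^{C\epsilon M_t}$ should appear. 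Without an input of this type your virial inequality cannot close. Separately, in the by-product (\ref{june9eqn10}) your region $\{|\slashed y|>2|\slashed x-\slashed y|\}$ is not actually handled: reinstating the weight there reproduces the original integral, and the ``cheap pointwise estimate optimized against a Young-type bound'' is not a proof of the $2^{2\epsilon M_t}$ term (one needs, e.g., a genuine decomposition in $|\slashed x-\slashed y|$ together with the cylindrical symmetry); also your region-one bound carries an extra factor $2^{2\epsilon^{\star}k}$ that must be absorbed, which requires restricting to the relevant range of $k$ and treating the remaining $k$ by the rough bounds of Lemma \ref{roughesti}.
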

\begin{proof}
See \cite{wang2}[Proposition 3.1].
\end{proof}

We record the following rough estimates for the localized electric field obtained in \cite{wang2}.
 \begin{lemma}\label{roughesti}
 For any $s\in [0, t]\subset [0, T^{})$,  the following rough estimate holds for the localized electric field, 
\be\label{may31eqn1}
\|E_{k;j_1,j_2}(s,x)\|_{L^\infty_x} \lesssim \min\{2^{-k+2j_1+j_2}, 2^{2k-j_2}, 2^{2k-n j_2} \tilde{M}(t)\}. 
\ee
Moreover, for any $x\in \R^3$ s.t., $|\slashed x|\neq 0$, we have the following point-wise estimate, 
\be\label{may31eqn2}
|E_{k;j_1,j_2}( s,x)|\lesssim 1+ \min\{ \frac{ 2^{j_1+\epsilon M_t}}{|\slashed x |^{1/2}}, \frac{2^{k-j_2+\epsilon M_t}}{|\slashed x |} \}.
\ee
\end{lemma}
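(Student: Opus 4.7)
The plan is to split the lemma into its two assertions and attack each via convolution estimates for $E_{k;j_1,j_2}=K_k\ast_x \rho_{j_1,j_2}$, where $\rho_{j_1,j_2}(s,x):=\int f(s,x,v)\varphi_{j_1}(\slashed v)\varphi_{j_2}(v)\,dv$. From (\ref{june9eqn21}), the kernel obeys $\|K_k\|_{L^1_y}\lesssim 2^{-k}$ and $\|K_k\|_{L^\infty_y}\lesssim 2^{2k}$, while the density admits the two complementary bounds $\|\rho_{j_1,j_2}\|_{L^\infty_x}\lesssim 2^{2j_1+j_2}\|f_0\|_{L^\infty_{x,v}}$ (volume of the $v$-support times conservation of $f$ along characteristics, cf.\ (\ref{conservation})) and $\|\rho_{j_1,j_2}\|_{L^1_x}\lesssim 2^{-m j_2}M_m(s)$ for any $m$ (using $\varphi_{j_2}(v)\lesssim 2^{-mj_2}(1+|v|)^m$).

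For (\ref{may31eqn1}), I apply Young's inequality in three ways: pairing $\|K_k\|_{L^1_y}$ with the $L^\infty_x$ bound on $\rho_{j_1,j_2}$ gives $2^{-k+2j_1+j_2}$; pairing $\|K_k\|_{L^\infty_y}$ with the $L^1_x$ bound for $m=1$ gives $2^{2k-j_2}$ (after using $M_1(s)\lesssim 1$ from (\ref{conservationlaw})); and pairing $\|K_k\|_{L^\infty_y}$ with the same $L^1_x$ bound for $m=n$ gives $2^{2k-nj_2}\tilde{M}_n(t)$, by definition of $\tilde M_n$ in (\ref{may2eqn1}).

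For the pointwise estimate (\ref{may31eqn2}) I use the cylindrical symmetry of $f$ (which is preserved by the characteristic flow), so that $\rho_{j_1,j_2}(s,y)$ depends only on $(|\slashed y|,y_3)$. Passing to cylindrical coordinates $y=(r\cos\theta,r\sin\theta,y_3)$ and performing the $\theta$-integration first, the angular average $\int_0^{2\pi}|K_k(\slashed x-r\hat\theta,x_3-y_3)|\,d\theta$ is a Bessel-type integral that gains, after a change of variables to the distance $\tau=|\slashed x-r\hat\theta|$, a Jacobian of order $(|\slashed x|r)^{-1/2}$ in the non-degenerate range $||\slashed x|-r|\lesssim \tau\lesssim |\slashed x|+r$. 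This is the mechanism producing the factor $|\slashed x|^{-1/2}$ in the first alternative of (\ref{may31eqn2}), and after one further integration against the cylindrical measure $r\,dr$ combined with the $L^\infty_y$ size of $K_k$, the factor $|\slashed x|^{-1}$ in the second alternative. The $v$-localization weights $2^{j_1}$ and $2^{k-j_2}$ enter respectively from the pairing with $\|\rho_{j_1,j_2}\|_{L^1}$ vs.\ a weighted $L^1$, and the losses $2^{\epsilon M_t}$ absorb the use of the weighted control given by the space-time estimate (\ref{jan12eqn36}) of Lemma \ref{spacetimeest}, which is the only input that distinguishes the pointwise bound near the axis from a purely convolutional estimate.

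The main obstacle is the pointwise bound (\ref{may31eqn2}): one has to exploit the cylindrical symmetry to convert the naively divergent behavior of $K_k\ast \rho_{j_1,j_2}$ near the axis into the integrable singularities $|\slashed x|^{-1/2}$ and $|\slashed x|^{-1}$. Tracking the correct exponents in the Bessel-type angular average (valid only in the regime $|\slashed x|\gtrsim 2^{-k}$, the complementary regime being handled by the $L^\infty$-bounds in (\ref{may31eqn1})), and absorbing the losses from the weighted bound (\ref{jan12eqn36}) cleanly into the $2^{\epsilon M_t}$ factor, are the delicate points. Everything else is a routine application of Young's inequality and the a priori bounds already at our disposal.
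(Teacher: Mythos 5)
Your treatment of \eqref{may31eqn1} is fine and is the expected argument: $E_{k;j_1,j_2}=K_k\ast_x\rho_{j_1,j_2}$, the kernel bounds $\|K_k\|_{L^1}\lesssim 2^{-k}$, $\|K_k\|_{L^\infty}\lesssim 2^{2k}$ from \eqref{june9eqn21}, the support-volume bound $\|\rho_{j_1,j_2}\|_{L^\infty_x}\lesssim 2^{2j_1+j_2}$, and $\|\rho_{j_1,j_2}\|_{L^1_x}\lesssim 2^{-mj_2}M_m(s)$ with $m=1$ (energy conservation) and $m=n$ (definition of $\tilde M_n$). Note the paper itself does not prove this lemma but cites \cite{wang2}[Lemma 3.1], so the comparison can only be against the natural argument; for the first display yours is it.

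For the pointwise bound \eqref{may31eqn2}, however, your outline has a genuine gap. First, you cannot use the space-time estimate \eqref{jan12eqn36} as an input: that is a time-integrated quantity, so it says nothing about $E_{k;j_1,j_2}(s,x)$ at a \emph{fixed} time $s$; in the paper it only feeds the $L^1_tL^\infty_x$ bound \eqref{june9eqn10}, which is a different statement. Second, your accounting of the factors $2^{j_1}$ and $2^{k-j_2}$ does not come out. After the angular integration (arc-length gain $\lesssim 2^{-k}/\sqrt{r r_0}$ with $r_0=|\slashed x|$, $r=|\slashed y|\sim r_0$ on the support of $K_k(x-y)$), the remaining integral is $\int\!\!\int \rho_{j_1,j_2}(r,y_3)\,\sqrt{r}\,dr\,dy_3$ over a box of side $\sim 2^{-k}$, i.e.\ the Jacobian $r\,dr$ has been spent; pairing this with $\|\rho_{j_1,j_2}\|_{L^1_x}\lesssim 2^{-j_2}$, as you propose, requires reinserting $1/r\sim 1/r_0$ and yields precisely the second alternative $2^{k-j_2}|\slashed x|^{-1}$ — it cannot produce $2^{j_1}$. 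The first alternative $2^{j_1}|\slashed x|^{-1/2}$ needs an additional idea you do not supply: e.g.\ Cauchy--Schwarz in $(r,y_3)$ over the $2^{-k}$-box together with $\|\rho_{j_1,j_2}\|_{L^2_x}\lesssim(\|\rho_{j_1,j_2}\|_{L^\infty_x}\|\rho_{j_1,j_2}\|_{L^1_x})^{1/2}\lesssim 2^{j_1}$, which gives $2^{2k}\cdot 2^{-2k}\cdot 2^{j_1}\,r_0^{-1/2}$ as desired, using only fixed-time inputs (conservation of $\|f\|_{L^\infty_{x,v}}$, energy conservation, the $v$-support volume, and cylindrical symmetry). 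As written, your argument neither closes the $2^{j_1}|\slashed x|^{-1/2}$ bound nor is legitimate in its appeal to Lemma \ref{spacetimeest}.
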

\begin{proof}
See \cite{wang2}[Lemma 3.1]. 

\end{proof}
\begin{lemma}\label{roughgeneral}
 For any $s, s_1,s_2\in [0, t]\subset [0, T^{})$,  the following $L^1_t L^\infty_x$-type and $L^\infty_x$ type rough estimates for the electric field hold, 
 \be\label{june14eqn21}
  \int_{s_1}^{s_2} \| E_{ }(s,\cdot )\|_{L^\infty_x} d  s \lesssim2^{(1+5\epsilon)M_t}, 
 \ee
 \be\label{june2eqn71}
 \|E_{ }(s,x)\|_{L^\infty_x} \lesssim  \min\{ 2^{4  \tilde{\alpha}_t M_t/3 + M_t/3+5\epsilon M_t},  |\slashed x |^{-1/2} 2^{ \tilde{\alpha}_t M_t+\epsilon M_t }\}.  
\ee
\end{lemma}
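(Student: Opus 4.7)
The plan is to expand $E = \sum_{k,j_1,j_2} E_{k;j_1,j_2}$ via the decomposition in (\ref{june9eqn2}), bound each piece by combining the pointwise estimates of Lemma \ref{roughesti} with the $L^1_t L^\infty_x$ bound (\ref{june9eqn10}) from Lemma \ref{spacetimeest}, and then sum. Outside the ranges $j_1 \leq (\alpha_s + \epsilon) M_s$ and $j_2 \leq (\beta_s + \epsilon) M_s$ the distribution function is super-polynomially small by (\ref{outsidemajority}), so those pieces contribute negligibly; by the monotonicity (\ref{2021dec18eqn1}) and the definitions in (\ref{may9en21}) one always has $\tilde\alpha_s M_s \leq \tilde\alpha_t M_t$, so for the pointwise estimate it is enough to truncate $j_1$ at $(\tilde\alpha_t+\epsilon) M_t$, while the $j_2$-sum effectively terminates near $M_t$ thanks to the high-moment estimate $2^{2k-nj_2}\tilde{M}_n(t)$ in (\ref{may31eqn1}), which beats $2^{2k-j_2}$ precisely when $j_2 \gtrsim M_t$.

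For (\ref{june14eqn21}), I would combine (\ref{june9eqn10}) with the time-integrated version of (\ref{may31eqn1}) (using $|s_2-s_1| \leq t \lesssim 2^{M_t}$) to obtain the triple-indexed bound $\int_{s_1}^{s_2}\|E_{k;j_1,j_2}\|_{L^\infty_x}\,ds \lesssim \min\{ 2^{2\epsilon M_t} + 2^{k-2j_1+j_2+6\epsilon M_t},\ 2^{M_t}\min\{2^{-k+2j_1+j_2},\, 2^{2k-j_2},\, 2^{2k-nj_2}\tilde{M}_n(t)\}\}$. For each $(j_1,j_2)$, the $k$-sum is controlled by balancing the decreasing factor $2^{-k+2j_1+j_2}$ against the increasing factors (with the moment version taking over for $j_2 \gtrsim M_t$), and summing $j_1, j_2$ over the truncated ranges then yields (\ref{june14eqn21}).

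For (\ref{june2eqn71}), the uniform bound is obtained by balancing the first two estimates in (\ref{may31eqn1}) in $k$ at $k \approx 2(j_1+j_2)/3$, producing $\sum_k \|E_{k;j_1,j_2}\|_{L^\infty_x} \lesssim 2^{(4j_1+j_2)/3}$ for $j_2 \leq M_t$; for $j_2 > M_t$ the same balance with $2^{2k-j_2}$ replaced by $2^{2k-nj_2}\tilde{M}_n(t)$ gives a contribution that is summable in $j_2$. Summing $j_1$ up to $(\tilde\alpha_t+\epsilon) M_t$ and $j_2$ up to $M_t + O(\epsilon M_t)$ then yields $2^{4\tilde\alpha_t M_t/3 + M_t/3 + 5\epsilon M_t}$. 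The weighted bound is obtained from the estimate $|\slashed x|^{-1/2}2^{j_1+\epsilon M_t}$ of (\ref{may31eqn2}), with the $k$-summation controlled by pairing the other bound $|\slashed x|^{-1}2^{k-j_2+\epsilon M_t}$ of (\ref{may31eqn2}) against $2^{2k-j_2}$ and $2^{-k+2j_1+j_2}$ from (\ref{may31eqn1}), and then summing $j_1$ up to $(\tilde\alpha_t+\epsilon)M_t$.

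The main obstacle is the careful bookkeeping of the $O(\epsilon M_t)$ losses across the multi-index summation so that they stay within the advertised $5\epsilon M_t$ slack, and correctly identifying the transition $j_2 \approx M_t$ where the Bernstein-type estimate $2^{2k-j_2}$ is superseded by the moment estimate $2^{2k-nj_2}\tilde{M}_n(t)$; neither the pure Bernstein nor the pure moment bound is sharp across the whole range of $j_2$, and the stated exponent $4\tilde\alpha_t/3 + 1/3$ emerges only after performing the crossover at $j_2 \approx M_t$.
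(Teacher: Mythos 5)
Your overall strategy (dyadic decomposition (\ref{june9eqn2}), the moment bound $2^{2k-nj_2}\tilde M_n(t)$ to kill $j_2\gtrsim(1+\epsilon)M_t$, the majority-set truncation of $j_1$ at $\approx\tilde\alpha_t M_t$ via (\ref{2021dec18eqn1}) and (\ref{outsidemajority}), and balancing the two Bernstein-type bounds of (\ref{may31eqn1}) in $k$ to get $2^{4j_1/3+j_2/3}$) is the paper's argument, and your treatment of the pointwise estimate (\ref{june2eqn71}) is essentially identical to it.

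However, your proof of the $L^1_tL^\infty_x$ estimate (\ref{june14eqn21}) has a genuine quantitative gap: you convert the $L^\infty_x$ bounds of (\ref{may31eqn1}) into time-integrated bounds using $|s_2-s_1|\le t\lesssim 2^{M_t}$. With that time factor, the only decreasing-in-$k$ bound at your disposal is $2^{M_t-k+2j_1+j_2}$, and its crossover with the smoothing bound $2^{k-2j_1+j_2+6\epsilon M_t}$ from (\ref{june9eqn10}) occurs at the value $2^{j_2+M_t/2+3\epsilon M_t}$ (the alternative crossover with $2^{M_t+2k-j_2}$ gives $2^{M_t+4j_1/3+j_2/3}$, which is even worse when $j_1\sim j_2$); summing over $j_2$ up to the moment-truncation point $\approx(1+\epsilon)M_t$ then yields roughly $2^{3M_t/2}$, which overshoots the target $2^{(1+5\epsilon)M_t}$, so the sums do not close. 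The point you are missing is that the time of existence considered is in fact tiny on the $2^{M_t}$ scale: by the definition (\ref{may2eqn1}), $(1+t)^{n^2}\le\tilde M_n(t)\le 2^{(n-1)M_t}$, hence $1+t\le 2^{M_t/n}=2^{\epsilon M_t/100}$. Using this, the time-integrated version of the first bound in (\ref{may31eqn1}) costs only $2^{-k+2j_1+j_2+\epsilon M_t}$, and its balance against $2^{k-2j_1+j_2+6\epsilon M_t}$ gives $2^{j_2+3.5\epsilon M_t}\le 2^{(1+4.5\epsilon)M_t}$ per block, which (with the $O(M_t^3)$ logarithmic loss over the remaining indices $k\le 3M_t+5\epsilon M_t$, $j_2\le(1+\epsilon)M_t$, $j_1\le j_2+2$) is exactly how the paper obtains (\ref{june14eqn21}); this restriction on $t$ is precisely why the term $(1+t)^{n^2}$ is built into $\tilde M_n(t)$.
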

\begin{proof}
Recall the decomposition of the electric field in (\ref{june9eqn2}). 
From the obtained rough estimate (\ref{may31eqn1}), we can first rule out the case $j_2\geq (1+\epsilon)M_t$ as follows, 
\[
\sum_{j_2\geq (1+\epsilon)M_t}\sum_{j_1\in[0,j_2+2]\cap \Z} \sum_{k\in \Z}   \| E_{k;j_1,j_2 }(s,\cdot )\|_{L^\infty_x} \lesssim \sum_{k\in \Z}  \sum_{j_2\geq (1+\epsilon)M_t}2^{\epsilon j_2}  \min\{2^{-k+3j_2}, 2^{2k-nj_2+(n-1)M_t} \}
\]
\be\label{2022jan18eqn30}
\lesssim  \sum_{k\in \Z}   \sum_{j_2\geq (1+\epsilon)M_t} 2^{-\epsilon k} 2^{ (2+\epsilon)j_2 }\big( 2^{-nj_2+(n-1)M_t}\big)^{(1-\epsilon)/3}\lesssim 1. 
\ee
For any fixed $j_2\in [0, (1+\epsilon)M_t]\cap \Z$, again from the  rough estimate (\ref{may31eqn1}), we can rule out the case $k\geq 3M_t + 5\epsilon M_t$ as follows, 
\be\label{2022jan18eqn31}
\sum_{k\in \Z, k\geq  3M_t + 5\epsilon M_t}  \| E_{k;j_1,j_2 }(s,\cdot )\|_{L^\infty_x}\lesssim \sum_{k\in \Z, k\geq  3M_t + 5\epsilon M_t} 2^{-k+3(1+\epsilon)M_t} \lesssim 1. 
\ee

It remains to consider the case $k\in [0,3M_t + 5\epsilon M_t]\cap \Z$,  $j_2\in [0, (1+\epsilon)M_t]\cap \Z $ and $j_1\in [0, j_2+2]\cap \Z$. Since there are at most $M_t^3$ cases, which causes only logarithmic loss. It would be sufficient to let $k,j_1,j_2$ be fixed. From the estimate (\ref{june9eqn10}) and the estimate (\ref{may31eqn1}), we have
\be\label{2022jan18eqn71}
 \int_{s_1}^{s_2}\| E_{k;j_1,j_2 }(s,\cdot )\|_{L^\infty_x}  ds \lesssim 2^{2\epsilon M_t} + \min\{2^{-k+2j_1+j_2+\epsilon M_t}, 2^{k-2j_1+j_2 + 6\epsilon M_t}\}\lesssim 2^{j_2 + 3.5\epsilon M_t}\lesssim 2^{(1+4.5\epsilon)M_t}. 
\ee
Hence finishing the proof of our desired estimate (\ref{june14eqn21}).

It remains to prove (\ref{june2eqn71}). Note that,  If $j_1 \in [ {\alpha}_t M_t + \epsilon M_t, j_2+2]\cap \Z$, from  the estimates (\ref{2021dec18eqn1}) and  (\ref{outsidemajority}), we have 
\be
f(t,x,v)=f_0( X(x,v,0,t),V(x,v,0,t) )\lesssim 2^{-10M_t}, \quad   
\Longrightarrow \| E_{k;j_1,j_2 }(s,\cdot )\|_{L^\infty_x}  \lesssim 2^{-k + 3j_2-10 M_t}  \lesssim 1. 
\ee
If $j_1 \leq  \min\{{\alpha}_t M_t + \epsilon M_t, j_2+2\} $, then from the rough estimate (\ref{may31eqn1}) in Lemma \ref{roughesti}, we have
\[
\| E_{k;j_1,j_2 }(s,\cdot )\|_{L^\infty_x} \lesssim \min\{2^{-k+2j_1+j_2}, 2^{2k-j_2}\}\lesssim 2^{4j_1/3+j_2/3}\lesssim   2^{4  \tilde{\alpha}_t M_t/3 + M_t/3+3\epsilon M_t} .
\]
Hence finishing the proof of the desired estimate (\ref{june2eqn71}) holds from the above obtained estimates and the estimate (\ref{may31eqn2}) in Lemma \ref{roughesti}. 
\end{proof}

\section{Control of the $t$-majority set and the proof of main theorem}\label{proofmain}

 At large scale, the framework of the proof is almost same as the one we used for the RVM system in \cite{wang3}.  Intuitively speaking, the main ideas of proof of theorem \ref{maintheorem} can be summarized as follows, 
 \begin{multline}\label{logic}
  \textup{smoothing effect  }\stackrel{\textup{bootstrap} }{\Longrightarrow} \alpha_t\leq 0.71, \quad 
  \textup{smoothing effect +hyperbolic nature} +  \alpha_t\leq  0.71 \stackrel{\textup{bootstrap} }{\Longrightarrow} \beta_t\leq 1-2\epsilon \\ 
  \Longrightarrow  \tilde{M}_n(t) \lesssim  (1+t)^{n^2} \Longrightarrow E(t)\in L^\infty([0, T)\times \R^3 \overset{\textup{continuation criteria} }{\Longrightarrow  } \textup{global existence}.
 \end{multline}

  First of all, by assuming the validity of   $\beta_t\leq 1-2\epsilon$, which is stated in Proposition \ref{controlchar}, we prove our main theorem.   From the conservation law (\ref{conservationlaw}), we have 
\be\label{march20eqn11}
 \big| \int_{\R^3}\int_{|v|\leq 2^{(1- \epsilon )M_t }}(1+|v|)^{n} f(t,x,v)   d x dv\big|\lesssim  2^{(n-1)(1- \epsilon)M_t}\leq (\tilde{M}_{n}(t))^{ 1- \epsilon }. 
\ee

Moreover,  from the estimate $\beta_t \leq (1-2\epsilon)$, we know that  $|  X(x,v,0,t) |+|  V(x,v,0,t) |\geq 2^{  M_t /2}$ if $|v|\geq 2^{(1- \epsilon )M_t }$. From the decay assumption of the initial data in (\ref{assumptiononinitialdata}) and the following estimate holds if $|v|\geq 2^{(1- \epsilon)M_t }$, 
\be\label{may11eqn31}
|f(t,x,v)|=| f_0(X(x,v,0,t),  X(x,v,0,t))| \lesssim (1+|X(x,v,0,t)|+|V(x,v,0,t)|)^{-N_0+10}\lesssim 2^{-5 n M_t +5}.
\ee

Recall (\ref{characteristicseqn}), from the rough   $L^\infty_x$ estimate of electric field in (\ref{june2eqn71}) in Lemma \ref{roughgeneral}, the following estimates hold if $ |v|\gtrsim 2^{ 2 M_t}, |x|\gtrsim 2^{2\epsilon M_t}$, 
\be\label{may11eqn32}
\big|v-V(x,v,0,t)|\big| \lesssim    2^{5M_t/3+10\epsilon M_t}, \quad \big|x-X(x,v,0,t)|\big| \lesssim    2^{3 \epsilon M_t/2},\quad  \Longrightarrow |v|\sim |V(x,v,0,t)|, \big|x\big|\sim \big|    X(x,v,0,t) \big|.  
\ee  
 
To sum up, after combining the above estimate  (\ref{may11eqn31}), we know that the following estimate holds if $|v|\geq 2^{(1- \epsilon)M_t }$, 
\be
|f(t,x,v)|=| f_0(X(x,v,0,t),  X(x,v,0,t))|\lesssim (1+|x|)^{-4}(1+|v |)^{-n -4}.  
\ee
From the above estimate, we have
\be\label{may11eqn34}
 \big| \int_{\R^3}\int_{|v|\geq 2^{(1- \epsilon)M_t }}(1+|v|)^{n} f(t,x,v)   d x dv\big|\lesssim 1.
\ee
Therefore, recall (\ref{may2eqn1}), from the estimates (\ref{march20eqn11}) and (\ref{may11eqn34}), we know that the following estimate holds for any  $t\in[0,T)$,
\be 
M_{n}(t)\lesssim \big( \tilde{M}_{n}(t) \big)^{  1- \epsilon }.  
\ee
From the above estimate and 
the fact that $\tilde{M}_n(t)$ is an increasing function with respect to $t $, the following estimate holds for any $s\in [0, t]$,
\[
\tilde{M}_{n}(t)=\sup_{s\in [0, t]}M_{n}(s)+  (1+t)^{n^3}  \lesssim \big( \tilde{M}_{n}(  t) \big)^{  1- \epsilon}+  (1+t)^{n^3}, 
 \quad \Longrightarrow \tilde{M}_{n}(t)\lesssim   (1+t)^{n^3}. 
\]

Therefore, $ \tilde{M}_n(t) $ grows at most at rate $(1+t)^{n^3}$ over time. From the rough   $L^\infty_x$ estimate of electric field in (\ref{june2eqn71}) in Lemma \ref{roughgeneral}, we know that    that  the boundedness of $ \tilde{M}_n(t) $ in $[0, T)$ ensures that $E \in L^\infty([0,T^{})\times \R_{x}^3)$, hence the 
   the lifespan  of the system  (\ref{characteristicseqn}) can be extended at any finite time, i.e., the solution exists globally, see also the continuation criteria by Luk-Strain \cite{luk}[Theorem 5.7]. Hence finishing the proof of Theorem \ref{maintheorem}.

   Now, it would be sufficient to prove the   Proposition \ref{controlchar}. For the rest of this section, we mainly  prove     Proposition \ref{controlchar} by assuming the validity of Lemma \ref{horprojlemma} and Lemma \ref{bootstraplemma2}.

\begin{proposition}\label{controlchar}
For any $t\in [0, T)$, s.t., $M_t\gg 1$, we have
\be\label{sizeofmajority}
\alpha_t \leq \alpha^{\star}:=7/10+\iota, \quad \beta_t\leq (1-2\epsilon), \quad \iota:=10^{-2}, \quad \epsilon= 10^{-5}. 
\ee
\end{proposition}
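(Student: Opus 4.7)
The plan is to run a continuity/bootstrap argument in two successive stages, following exactly the logic diagram (\ref{logic}). Let $T^{\star}\in [0,T)$ be the supremum of times $t$ for which both inequalities in (\ref{sizeofmajority}) hold. Assuming for contradiction that $T^{\star}<T$, by the (essential) continuity of $\alpha_t$ and $\beta_t$ in $t$ one has $\alpha_{T^{\star}}=\alpha^{\star}$ or $\beta_{T^{\star}}=1-2\epsilon$, so it suffices to strictly improve each bound at $t=T^{\star}$. Integrating the characteristic ODE (\ref{characteristicseqn}) on $[0,s]$ for any $(x,v)\in R_t(0)$ gives the elementary identities
\[
|\slashed V(x,v,s,0)|\leq |\slashed v|+\int_0^s |\slashed E(\tau, X(x,v,\tau,0))|\, d\tau,\qquad |V(x,v,s,0)|\leq |v|+\int_0^s |E(\tau, X(x,v,\tau,0))|\, d\tau,
\]
and since $|v|\leq 2^{M_t/2}$ on $R_t(0)$, the whole task reduces to sharp control of the time integrals of $|\slashed E|$ and $|E|$ along characteristics.

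For the $\alpha_t$ improvement I would invoke Lemma \ref{horprojlemma}, whose estimate (\ref{projest}) is tailored to the horizontal projection $|\slashed E|$ under cylindrical symmetry and gains a genuine smallness factor over the naive bound on $|E|$. Feeding (\ref{projest}) into the dyadic decomposition (\ref{june9eqn2}), and combining with the rough pointwise bounds of Lemma \ref{roughesti} and the $L^1_t L^\infty_x$-type bound (\ref{june9eqn10}) of Lemma \ref{spacetimeest}, I expect an estimate of the form $\int_0^{T^{\star}}|\slashed E(\tau, X(x,v,\tau,0))|\, d\tau\lesssim 2^{(\alpha^{\star}-\delta)M_{T^{\star}}}$ for some $\delta>0$; together with $|\slashed v|\leq 2^{M_t/2}$ this yields the strict improvement $\alpha_{T^{\star}}<\alpha^{\star}$.

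With $\alpha_t\leq \alpha^{\star}$ in hand I would turn to the $\beta_t$ improvement. Here the rough pointwise bound (\ref{june2eqn71}) only gives $|E|\lesssim 2^{(4\alpha^{\star}/3+1/3+5\epsilon)M_t}$, and the $L^1_tL^\infty_x$ bound (\ref{june14eqn21}) gives $\lesssim 2^{(1+5\epsilon)M_t}$; neither is sharp enough to close the bootstrap with the $2\epsilon$ margin in (\ref{sizeofmajority}). This is precisely where Lemma \ref{bootstraplemma2} enters: built on the Kirchhoff formulas (\ref{dec20eqn1})--(\ref{march4eqn100}) together with the extra equation $\partial_t\nabla\phi=-4\pi\int f\hat v\, dv$ that distinguishes sRVM from RVP, it should deliver a time-integrated bound for $|E|$ along characteristics in the $t$-majority regime of order strictly smaller than $2^{(1-2\epsilon)M_t}$. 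Combined with $|v|\leq 2^{M_t/2}$ on $R_t(0)$, this produces the strict improvement $\beta_{T^{\star}}<1-2\epsilon$ and contradicts maximality of $T^{\star}$. The main obstacle, as flagged by (\ref{logic}), is this second stage: one must extract genuine decay from the hyperbolic half of the sRVM system while tracking the interplay between the loss $4\alpha^{\star}/3+1/3$ from the pointwise field bound, the Kirchhoff-based gain encoded in Lemma \ref{bootstraplemma2}, and the logarithmic $M_t^{\epsilon}$-losses from dyadic summation, so that the numerology $\alpha^{\star}=7/10+\iota$ and $1-2\epsilon$ closes with strict room to spare.
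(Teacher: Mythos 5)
There is a genuine gap, and it sits exactly where you reduce the proposition to ``sharp control of the time integrals of $|\slashed E|$ and $|E|$ along characteristics.'' Both key lemmas of the paper control \emph{signed} time integrals, $\int_{t_1}^{t_2} C(\slashed X(s),V(s))\cdot E(s,X(s))\,ds$ and $\int_{t_1}^{t_2} V(s)\cdot E(s,X(s))\,ds$, and the gain in (\ref{projest}) and (\ref{2022jan18eqn51}) comes from cancellation (normal forms, angular localization, the Kirchhoff representation). Once you pass to $|\slashed V(s)|\leq |\slashed v|+\int_0^s|\slashed E(\tau,X(\tau))|\,d\tau$ by the triangle inequality, that structure is destroyed: no estimate in the paper gives $\int_0^{t}|\slashed E(\tau,X(\tau))|\,d\tau\lesssim 2^{(\alpha^{\star}-\delta)M_{t}}$, and the only absolute bounds available, (\ref{june14eqn21}) and (\ref{june2eqn71}), give $2^{(1+5\epsilon)M_t}$ and $2^{(4\tilde\alpha_t/3+1/3+5\epsilon)M_t}$, both far above the target. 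The correct route is the one the paper takes: apply Lemma \ref{horprojlemma} with $C(\slashed x,v)=|\slashed v|^{-1}(\slashed v,0)$ to the identity $\frac{d}{ds}|\slashed V(s)|=|\slashed V(s)|^{-1}(\slashed V(s),0)\cdot E(s,X(s))$ from (\ref{oct22eqn1}), so that the quantity being estimated is itself the signed integral, and likewise use $\frac{d}{ds}|V(s)|^2=2V(s)\cdot E(s,X(s))$ with Lemma \ref{bootstraplemma2} to control the \emph{change of $|V|^2$} directly (which is $\lesssim 2^{(2\gamma-5\epsilon)M_t}$, small relative to $|V|^2\sim 2^{2\gamma M_t}$), rather than trying to extract a bound on $\int|E|$.

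Even granting the correct use of Lemma \ref{horprojlemma}, your argument is missing the two ingredients that make its conclusion summable. The hypothesis of the lemma requires $|\slashed X(s)|\sim 2^{a_p}$ throughout $[t_1,t_2]$, and the right-hand side of (\ref{projest}) contains terms proportional to $2^{-ba_p}2^{b(\gamma_1-\gamma_2)M_t+(\gamma_1-2\epsilon)M_t}(t_2-t_1)$, which are useless if $t_2-t_1$ is the full lifespan and $a_p$ is small. The paper closes this by (i) an outgoing-trajectory argument: the positive term $|\slashed V(s)|^2/(1+|V(s)|^2)$ in $\frac{d}{ds}\bigl(\slashed X\cdot\slashed V/\sqrt{1+|V|^2}\bigr)$ forces $|\slashed X(s)|$ to grow, so the dwell time in each dyadic shell $|\slashed X|\sim 2^{a_i}$ is at most $\sim 2^{a_i-(\gamma_1-\gamma_2)M_t}$, which exactly cancels the $2^{-ba_p}$ losses; and (ii) a combinatorial argument (\textbf{Fact (i)}, \textbf{Fact (ii)} and the regrouping (\ref{nov4eqn11})) showing the trajectory visits only $O(M_t)$ shells in total, so the per-shell increments of $|\slashed V|$, each $\leq 2^{\gamma_1 M_t-\epsilon M_t}$, sum to something still below the threshold. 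Neither mechanism appears in your proposal, and without them the first stage of the bootstrap does not close. A smaller structural point: the paper bootstraps in the inner time $s\in[0,t]$ for fixed $t$ (using (\ref{2021dec18eqn1}) to compare $\alpha_sM_s$ with $\alpha_t(s)M_t$), rather than in the outer time $T^{\star}$, since $\alpha_t,\beta_t$ are normalized by the $t$-dependent scale $M_t$.
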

\begin{proof}

The argument presented is very general, which only depends on the analysis of the motion of the characteristics (\ref{characteristicseqn}) by using bootstrap arguments.  Actually, it's   almost same as the one we used in the study of $3D$ RVM system   \cite{wang3}[section 3]. The key difference between RVP system (\ref{vlasovpo}), RVM system (\ref{mainequation}), and the sRVM system (\ref{sRVM}) is that the behavior of the acceleration force  is very different in different settings. We summarize  the effects of acceleration force over time  as black boxes in     Lemma \ref{horprojlemma} and Lemma \ref{bootstraplemma2}.

 For the sake of readers and also for the pedagogical purpose,  instead of referring readers
  to \cite{wang3}[section 3],   we  still  give a full proof here. 

Let  $t\in [0,T)$   be fixed. Recall (\ref{characteristicseqn}).  For convenience in notation, \textbf{ we suppress  the dependence of   characteristics with respect to $( x,v) \in R_t(0)$}.  We define the first time that characteristics almost reaches the threshold as follows, 
\be\label{2021dec18eqn21}
t^{\ast}:=\inf\{s: s\in [0, t], \beta_t(s)\geq  (1-3\epsilon)  \}. 
\ee
 Due to the continuity of characteristics,  and the rough estimate of the electric  field (\ref{june2eqn71}) in Lemma  \ref{roughgeneral}, we know that $t^{\ast}>0. $ Moreover,  $\forall s \in [0, t^{\ast}],$ we have $|V(s)|\leq 2^{(1-3\epsilon)M_t}. $  We make the following bootstrap assumption, 
\be\label{2021dec18eqn22}
\tau:= \sup\{ \kappa: \kappa \in [0,t], \forall s\in [t^{\ast}, \kappa],   | 2^{ \beta_t(s) M_t}|\in  2^{(1-3 \epsilon)M_t }    [99/100, 101/100] \} .
\ee
We aim to improve the above bootstrap assumption and  prove that $\tau=t$.

  \textbf{Step} $\mathbf{1}$ \qquad Estimate of the projection of the velocity characteristics, i.e., $|\slashed V(s)|$ within the time interval $[0, \tau]$. 

 Let $ \gamma_1:=    \alpha^{\star}   -2  \epsilon$. Define $\tau^1$ and  $\tau^2 $ be the first time and the second time  that  the projection of the velocity characteristics almost reaches a threshold as follows 
\be\label{may10eqn109}
\tau_{\ast}:= \sup\{ s: s\in [0,\tau], \forall\kappa\in [0, s],    {\alpha_t (\kappa) }\leq  \gamma_1\},\quad \tau^{\star}:= \sup\{  s: s\in [0,\tau], \forall\kappa\in [0, s],    {\alpha_t (\kappa) }\leq    \gamma_1   +1/M_t \} 
 .
\ee
Our goal is to show that $\tau^{\star}= \tau$.  If either $\tau_{\ast}$ or $\tau^{\star}$ equals to $\tau$, then there is nothing left to be proved. We focus on the case $\tau_{\ast}< \tau^{\star}< \tau$. Let 
\[
\gamma_2:= \inf\{k: k\in \R_+, |V(x,v,\tau_{\ast}, 0)||\leq 2^{k M_t}\}. 
\]
We make the    bootstrap assumption for the velocity characteristics  $  V(s) $ as follow , 
\be\label{bootstrap1}
  \tau^{\ast}:=\sup\{s: s\in[\tau_{\ast},\tau^{\star} ], \forall \kappa\in [\tau^{\ast}, s],    |\slashed V(s)| \in   2^{  \gamma_1  M_t}  [99/100, 101/100], |V(s)|  \in   2^{   \gamma_2 M_t}  [99/100, 101/100] \}.
\ee
Now, our goal is to show that, independent of $(x,v)\in R_t(0)$, we can improve the above bootstrap assumption and show that $\tau^{\ast}= \tau^{\star}$.  Hence improving the upper bound in the definition of $\tau^{\star}$ and  ruling out the case $\tau^{\star}< \tau.$

Recall (\ref{may10majority}) and (\ref{characteristicseqn}), we have 
\[
\forall s\in [0,t], \quad |X(s)|\leq |X(0)|+\int_{0}^t  |\hat{V}(s)| d s\leq 2^{M_t/2 +1 }.
\]

Based on the possible size of  the magnitude of space characteristic in the horizontal plane, we decompose the interval $[0,M_t/2+1]$ into a finite union of intervals, which overlap with at most with four other intervals. More precisely, 
\begin{multline}\label{overlapintervals}
[0,2^{M_t/2+1}]=\cup_{i=0}^K I_i, \quad I_0=[0,  2^{{\kappa_0 }}], \quad I_k=[2^{a_k},2^{a_k+10}], \quad a_{0}=\kappa_0-6, \kappa_0=-100M_t,  \\ 
 \forall k\in\{1,\cdots, K-2\}, a_{k+1}= a_{k}+6,\quad a_{K}= M_t/20+1-10,\quad   K=\lceil (M_t/2+1-\kappa_0)/6\rceil.
\end{multline}

Moreover, as a result of direct computation, from (\ref{characteristicseqn}),   we have
\begin{multline}\label{oct22eqn1}
\frac{d}{ds} |\slashed X(s)|^2 = \frac{2 \slashed X(s)\cdot \slashed V(s)}{\sqrt{1+|V(s)|^2}},\qquad  \frac{d}{ds} \frac{\slashed X(s)\cdot \slashed V(s)}{\sqrt{1+|V(s)|^2}} = \frac{|\slashed V(s)|^2 }{1+|V(s)|^2}  +\big(\frac{\big(\slashed X(s), 0\big)}{\sqrt{1+|V(s)|^2}}- \frac{\slashed X(s)\cdot \slashed V(s) }{  \big({1+|V(s)|^2}\big)^{3/2}} V(s)\big)\cdot E(s, X(s)),\\ 
\frac{d}{ds} |\slashed V(s)|  = |\slashed V(s)|^{-1}\big(\slashed V(s), 0\big) \cdot E(s, X(s)  ), \quad \frac{d}{ds} |  V(s)|^2  =   2  V(s)  \cdot E(s, X(s)). 
\end{multline}

Within the time interval $[\tau_{\ast}, \tau^{\ast}]$, we   show two facts about the characteristics under the bootstrap assumption (\ref{bootstrap1}).
\begin{enumerate}
\item[ \textbf{Fact (i)}] Given any $t_1, t_2\in  [\tau_{\ast}, \tau^{\ast}]$ and any $i\in\{0,\cdots, K-1\}$, s.t., $\forall s\in [t_1, t_2], |\slashed X(s)|\in I_i\cup I_{i+1}$, then the bounds of speed characteristics $|\slashed V(s)|, |V(s)|$ can be improved for any $s\in [t_1,t_2].$ More, precisely, the increment of  $|\slashed V(s)|$     is at most  $2^{ \gamma_1 M_t- \epsilon M_t}$.  and  the increment of   $|V(s)|$   is at most  $2^{ \gamma_2 M_t- \epsilon M_t}$. 

\item[\textbf{Fact (ii)}] Let $I_{-1}:=\emptyset$. For any  $i\in\{0,\cdots, K\}$ and any $s\in  [\tau_{\ast}, \tau^{\ast}]$, if $|\slashed X(s)|\in I_i/I_{i-1}$ with $d |\slashed X(t)|^2/dt\big|_{t=s}\geq 0 $, then for any later time $l \in[s,\tau^{\ast}]$,   $|\slashed X(l)|\in \cup_{i\leq j\leq K} I_j$.
\end{enumerate}

Based on the possible size of $i$, we divide the proof of   \textbf{Fact (i)}  into two cases as follows. The   \textbf{Fact (ii)} will be obtained as a byproduct.

\quad \textbf{Case} $1$:\quad  Given any $t_1, t_2\in  [\tau_{\ast}, \tau^{\ast}]$, if $\forall s\in [t_1, t_2], |\slashed X(s)|\in I_0\cup I_1$.

Recall (\ref{oct22eqn1}). From the rough estimate of the electric   field (\ref{june2eqn71}) in   Lemma \ref{roughgeneral}, the following estimate holds for any $s\in [t_1, t_2],$
\[
 \frac{\slashed X(s )\cdot \slashed V(s  )}{\sqrt{1+|V(s )|^2}}-  \frac{\slashed X(t_1  )\cdot \slashed V(t_1  )}{\sqrt{1+|V(t_1 )|^2}}\geq \frac{4}{5}2^{2  \gamma_1 M_t-2\gamma_2 M_t} (s-t_1) - 2^{(\kappa_0-\gamma_2)M_t + 5M_t/3+10\epsilon M_t}(s-t_1),
\]
\[
\Longrightarrow  \frac{\slashed X(s )\cdot \slashed V(s  )}{\sqrt{1+|V(s )|^2}}  
\geq \frac{3}{4}2^{ 2  \gamma_1 M_t-2\gamma_2 M_t } (s-t_1)-\frac{11}{10} 2^{\kappa_0 M_t+   \gamma_1 M_t-\gamma_2 M_t},
\]
\be\label{nov28eqn78}
\Longrightarrow |X(s)|^2 -|X(t_1)|^2 \geq \frac{3}{4}2^{2 \gamma_1 M_t-2\gamma_2 M_t} (s-t_1)^2-\frac{11}{5} 2^{\kappa_0 M_t+   \gamma_1 M_t-\gamma_2 M_t } (s-t_1). 
\ee
Since $|\slashed X(t_2)|\leq 2^{\kappa_0+10}$, from the above estimate, we know that 
\[
2^{2\gamma_1 M_t-2\gamma_2 M_t} (t_2-t_1)^2\leq 2^{2\kappa_0 +20}, \quad \Longrightarrow |t_2-t_1|\leq 2^{\kappa_0+ \gamma_2 M_t-\gamma_1 M_t+10}.
\]
 
 From the above estimates, we know that,  if the lapse of time is greater than $2^{\kappa_0+\gamma_2   M_t -  \gamma_1 M_t+10}$, then the space characteristics will leave $I_0\cup I_1$, i.e., entering the region $\cup_{i=2}^K I_i\cup (I_2/I_1)$. Moreover,   from (\ref{oct22eqn1}) and  the rough estimate of the electric   field   (\ref{june2eqn71}) in   Lemma \ref{roughgeneral}, for any $s\in [t_1, t_2],$ we have
 \be 
  \big| |\slashed V(s)|^2 -|\slashed V(t_1)|^2 \big| +  \big| |  V(s)|^2 -|  V(t_1)|^2 \big|  \leq 2^{\gamma_2  M_t +2}\int_{t_1}^{t_2} \|E(s,x)\|_{L^\infty_x} ds \lesssim  2^{\kappa_0+2\gamma_2  M_t      }  2^{  (5/3+ 10\epsilon)M_t} \lesssim  1. 
 \ee

 \quad \textbf{Case} $2$:\quad  Given any $t_1, t_2\in  [\tau_{\ast}, \tau^{\ast}]$, if  $\forall s\in [t_1, t_2], |\slashed X(s)|\in I_i\cup I_{i+1} $ .

  Recall    (\ref{oct22eqn1}).   For any $s \in [t_1, t_2]$,  from the estimate (\ref{projest}) in  Lemma \ref{horprojlemma},  we have
  \[
  \frac{\slashed X(  s  )\cdot \slashed V( s )}{\sqrt{1+|V(s )|^2}}- \frac{\slashed X( t_1  )\cdot \slashed V( t_1 )}{\sqrt{1+|V(t_1 )|^2}}     \geq \frac{9}{10} 2^{ 2\gamma_1 M_t-2\gamma_2 M_t} (s -t_1)-   C \big[ \sum_{b\in \{0,2/3, 5/6,1\}}   2^{(1-b)a_i  } 2^{ b( \gamma_1  -\gamma_2   )M_t+ (\gamma_1-\gamma_2-2 \epsilon) M_t  }(t_2-t_1)
\]
\[
+ 2^{a_i+3\alpha^{\star} M_t/4 -\gamma_2 M_t+10\epsilon M_t}\big]
\]
\be\label{oct30eqn41}
\Longrightarrow    \frac{\slashed X(s   )\cdot \slashed V(s   )}{\sqrt{1+|V(s  )|^2}}  \geq \frac{4}{5}  2^{  2\gamma_1 M_t-2\gamma_2 M_t}   (s -t_1) -  \frac{11}{10}|\slashed  X(t_1)|2^{  \gamma_1 M_t-\gamma_2 M_t},
\ee
\[
\Longrightarrow  | \slashed X(s )|^2  -  | \slashed X(t_1)|^2   
   \geq  \frac{4}{5} 2^{   2(\gamma_1 - \gamma_2) M_t} (s -t_1)^2  -  \frac{22}{10 }|\slashed  X(t_1)|2^{    (\gamma_1 - \gamma_2) M_t}  (s -t_1), 
\]
\be\label{oct30eqn21}
\Longrightarrow | \slashed X( s  )| \geq 2^{ (\gamma_1 - \gamma_2) M_t -5}  (s -t_1)  -2|\slashed  X( t_1)|, \quad \Longrightarrow |t_2-t_1| \lesssim 2^{a_i -(\gamma_1 - \gamma_2) M_t}. 
\ee

From the above estimate of the time difference, the fact that $t_1, t_2\leq 2^{\epsilon M_t/100}$, and  the estimate (\ref{projest}) in  Lemma \ref{horprojlemma},    the following estimate holds for any $s \in [t_1, t_2]$,
\[
\big| | \slashed V(s )|  -  | \slashed V(t_1)|\big| \lesssim \big(  \sum_{b\in  \{0,2/3, 5/6,1\} }   2^{-ba_i } 2^{ b (\gamma_1 -\gamma_2) M_t   + (\gamma_1 -2 \epsilon) M_t}   \big)(s-t_1)  +  2^{   (3 {\alpha}^{\star} +\epsilon)M_t/4} \leq  2^{   \gamma_1 M_t-3\epsilon M_t/2 },
\]
\[
\big| |  V(s )|  -  |   V(t_1)|\big| \lesssim   \big[  \big(  \sum_{b\in  \{0,2/3, 5/6,1\} }   2^{-ba_i } 2^{ b (\gamma_1  -\gamma_2) M_t  +   (\gamma_1 -2 \epsilon) M_t }   \big)(s-t_1)  +  2^{  (3 {\alpha}^{\star}+20\epsilon)M_t/4} \big]\leq  2^{ \gamma_2 M_t -3\epsilon M_t/2  }   .
\]
 Moreover, if $\slashed X( t_1  )\cdot \slashed V( t_1 )\geq 0$, i.e., $d |X(t )|^2/dt\big|_{t=t_1}\geq 0$, and $|\slashed X( t_1  )|\in I_i/I_{i-1}=(2^{a_i+5}, 2^{a_i+10}]$, then we can improve the obtained estimate  (\ref{oct30eqn41}) as follows, 
 \[
 \textup{If $\slashed X( t_1  )\cdot \slashed V( t_1 )\geq 0$}, \quad  \frac{\slashed X(s   )\cdot \slashed V(s   )}{\sqrt{1+|V(s  )|^2}}  \geq \frac{4}{5}  2^{ 2(\gamma_1 -\gamma_2) M_t}   (s -t_1) -   2^{-10+a_i } 2^{ (\gamma_1 -\gamma_2) M_t},
 \]
 \be\label{oct30eqn51}
 \Longrightarrow \forall s\in [t_1, t_2], \quad   | \slashed X( s  )| \geq 2^{ (\gamma_1 -\gamma_2) M_t  -1}  (s -t_1)  +2^{-1}|\slashed  X( t_1)|
 \ee
 Recall (\ref{overlapintervals}).   From the above estimate, we  have  $| \slashed X( s  )|\in [2^{a_k+3}, 2^{a_k+8}]\subsetneq (2^{a_k}, 2^{a_{k}+10}]$, i.e., the space characteristics will leave $I_i$ and enter $I_{i+1}$ instead of $I_{i-1}$. 

 To sum up, our desired \textbf{Fact (i)} and \textbf{Fact (ii)} hold from the above discussion. Now, we are ready to improve the bootstrap assumption (\ref{bootstrap1}).  

 Thanks to \textbf{Fact (i)}, we only have to show that the trajectory of $|\slashed X(s)|, s\in  [\tau_{\ast}, \tau^{\ast}]$, visit intervals $I_i\cup I_{i+1}, i\in\{0, \cdots, K-1\}$ for at most     $C M_t$ number of times, which is only logarithmic.

Since the $d|\slashed X(s)|/ds$ is bounded, the trajectory of $|\slashed X(s)|$ can only visit intervals $J_i:=I_i/I_{i-1}, i\in\{0, \cdots, K\}$ for finite times. We order the visited intervals within $[\tau, \tau^{\star}]$ with respect to time as follows $(J_{i_1},J_{i_2},\cdots, J_{i_X})$, $\forall p\in\{1,\cdots, X\}, i_{p}\in \{0,\cdots, K\}$. Due to the possible revisit scenario, e.g., $J_1\longrightarrow J_2 \longrightarrow J_1\longrightarrow \cdots$, the size of  finite number  $X$ is not    clear. 

 Due to the continuity of characteristics, we have $ i_{p}-i_{p+1}\in\{1,-1\}$. Let  $ {i_{\iota_0}}$ be the first local minimum of the ordered set $(i_{1}, i_2, \cdots, i_{X})$, then $\iota_0\leq K+1, i_{\iota+1}= i_{\iota}+1$. Let $\tau_{ \iota_0}$ be the time such that the   $|\slashed X(\tau_{ \iota_0})|$leaves $I_{i_{ \iota_0}}$ and enters $I_{i_{ \iota_0 }+1}/I_{ \iota_0 }$. As $|\slashed X(s)|$ increases at time $\tau_{ \iota_0 }$, we know that  $d |\slashed X(t)|^2/dt\big|_{t=\tau_{ \iota_0}}\geq 0 $. Therefore, from the obtained \textbf{Fact (ii)}, we know that $i_{\kappa}\geq i_{ \iota_0 }$  for any $\kappa \geq  \iota_0$. Let $\iota_1:=\inf\{\kappa: i_{\kappa}\geq i_{\iota_0}+2,  \kappa\geq \iota_0\}$. We know that $J_{i_{\iota_1}}=I_{i_{\iota_0+2}}/I_{i_{\iota_0+1}}$ and $\cup_{\kappa\in [ {\iota_0},  {\iota_1}-1]\cap \Z} J_{i_{\kappa}} \subset I_{i_{\iota_0+1}}\cup  I_{ \iota_0 }$.

Similarly,  let $\tau_1$ be the time such that   $|\slashed X(\tau_{ \iota_1})|$leaves $I_{i_{ \iota_1}-1}=I_{i_{\iota_0+1}}$ and enters $J_{\iota_{\iota_1}}= I_{i_{ \iota_0 }+2}/I_{ i_{ \iota_0 }+1 }$, from the obtained \textbf{Fact (ii)}, we know that $i_{\kappa}\geq i_{ \iota_0 }+1$  for any $\kappa \geq  \iota_1$. Therefore, inductively, we can define a sequence of   $\{\iota_k\}_{k=0}^{m}$ such that the following property holds, 
 \begin{multline}\label{nov4eqn12}
 \{\kappa: i_{\kappa}\geq i_{\iota_m}+1\} =\emptyset,\quad \forall k\in\{0,\cdots, m-1\}, \quad  i_{\iota_{k+1}}\geq i_{\iota_k}+1,\quad  X_{\iota_k}:=\cup_{\kappa\in [\iota_k, \iota_{k+1}-1]\cap \Z} J_{i_{\kappa}}\subset I_{i_{\iota_k}+1}\cup I_{i_{\iota_k}},\\
  X_{\iota_m}:=\cup_{\kappa\in [\iota_m, K]\cap \Z} J_{i_{\kappa}}\subset I_{i_{\iota_m}+1}\cup I_{i_{\iota_m}}.
\end{multline}
If $i_{\iota_m}=K$, we use the convention that $I_{K+1}:=\emptyset.$

Since    $\{i_{\iota_k}\}_{k=0}^{m}$  is an increasing sequence with upper bound $K$, we know that $m\leq K+1$.  From the above discussion, we regroup the ordered set $(J_{i_1},J_{i_2},\cdots, J_{i_X})$, as follows, 
\be\label{nov4eqn11}
(J_{i_1}, J_{i_2},\cdots, J_{i_{\iota_0-1}}, X_{\iota_0},X_{\iota_1},\quad X_{\iota_m}),
\ee
in which the order is still consistent with the time order that the characteristic  $|\slashed X(s)|, s\in [\tau_{\ast}, \tau^{\ast}]$  travels. 

Since the total number of sets in (\ref{nov4eqn11}) is less than $2(K+1)\leq 100 M_t$, see (\ref{overlapintervals}),  which is only logarithmic, from the relation in (\ref{nov4eqn12}) and  \textbf{Fact (i)}, our bootstrap assumption is improved.  Hence finishing the bootstrap argument, i.e., $\tau^{\ast}= \tau^{\star}$, which further implies that  $\tau^{\ast}= \tau^{\star}=\tau.$

  \textbf{Step} $\mathbf{2}$ \qquad Estimate of the full velocity characteristics within the time interval $[0, \tau]$. 

As a result of the  \textbf{Step} $\mathbf{1}$  and (\ref{2021dec18eqn1}), we know that  $ \alpha_\tau M_\tau \leq \alpha_t(\tau) M_t \leq (\alpha^{\star}-2 \epsilon)M_t$.  Now, our goal is to show that $\tau = t  .$ Recall   (\ref{oct22eqn1}). For any $(x,v)\in R_t(0)$,  $t_1, t_2\in [t^{\ast}, \tau]$, the following estimate holds from  the estimate (\ref{2022jan18eqn51}) in Lemma \ref{bootstraplemma2} 
\[
\big||V(t_2)|^2  - |V(t_1)|^2   \big|\leq  2\big|\int_{t_1}^{t_2}    {V}(s)\cdot E(s, X(s) ) d s\big| \lesssim  2^{   (2\gamma-5\epsilon) M_t   }\leq 2^{  2\gamma M_t-4.5\epsilon M_t}.
\]
 Therefore, our bootstrap assumption in (\ref{2021dec18eqn22}) is improved.  Hence finishing the bootstrap argument, i.e., $\tau = t.$  Recall the definition of the majority set in (\ref{may9en21}). Since $\tau = t$, after rerunning the argument in  \textbf{Step} $\mathbf{1}$,   we have 
 \[
\beta_t = \sup_{s\in[0, t]}\beta_t(s) \leq 1-2\epsilon, \quad \alpha_t = \sup_{s\in[0, t]}\alpha_t(s) \leq \alpha^{\star}-  \epsilon. 
 \]
 Hence finishing the proof of our desired estimate (\ref{sizeofmajority}). 
\end{proof}

The key ingredient of the first induction in (\ref{logic}) is the following Lemma, in which we  use the smoothing effect by localizing the frequency on Fourier side and then doing normal form transformation. It worth to remark that the following Lemma is also valid for the RVP system (\ref{vlasovpo}) because we will not use the extra equation in the sRVM system (\ref{sRVM}) in the proof. 
\begin{lemma}\label{horprojlemma}
Let $t\in [0, T), a_p\in \Z, \alpha^{\star}=7/10+\iota, \iota:=10^{-2}$,  $t_1, t_2\in [0, t]$ be fixed, s.t., $M_t\gg 1, $ $\forall s\in [t_1, t_2], |\slashed X(s)|\sim 2^{a_p }, {\alpha}_s M_s\leq  \alpha^{\star} M_t,  |\slashed V(s)|\sim 2^{\gamma_1 M_t} ,  |  V(s)|\sim  2^{\gamma_2 M_t} $,  where  $\gamma_1\in  [ \alpha^{\star} -2\epsilon , \alpha^{\star}]  $, $\gamma_2 \leq (1-3\epsilon)  $. Assume that $C(\cdot, \cdot):\R^2 \times \R^3\rightarrow \R^3$ is a smooth function s.t., the following estimate holds for any $  s \in [t_1, t_2]$,
\be\label{assumptiononcoefficients}
 \big|C(\slashed x, V(s))\big| +|\slashed X(s)|\big|\nabla_{\slashed x}C(\slashed x, V(s))|_{\slashed x = \slashed X(s)} \big| + \sum_{|\alpha|\leq 10} |V(s)|^{|\alpha|} \big|\nabla_{v}^{\alpha}C(\slashed x, v)|_{v = V(s)} \big|\lesssim \mathcal{M}(C).
\ee
 Then the following estimate holds, 
\be\label{projest}
\big|\int_{t_1}^{t_2} C(\slashed X(s), V(s)) \cdot  E(s, X(s) ) d s \big|\lesssim  \mathcal{M}(C) \big[\big(\sum_{b\in  \{0,2/3, 5/6,1\}  }   2^{-ba_p  } 2^{ b(\gamma_1-\gamma_2)M_t+ (\gamma_1-2\epsilon )  M_t}\big)(t_2-t_1)+  2^{(3\alpha^{\star} + \epsilon)M_t/4}\big]. 
\ee
\end{lemma}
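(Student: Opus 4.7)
The plan is to attack the $s$-integral dyadically on the Fourier side via the decomposition $E=\sum_{k,j_1,j_2} E_{k;j_1,j_2}$ of (\ref{june9eqn2}). First I would combine the rough $L^\infty_x$ bounds (\ref{may31eqn1})--(\ref{may31eqn2}) with the weighted $L^1_t L^\infty_x$ bound (\ref{june9eqn10}) to eliminate the extreme regimes: very small $k$ (where the Fourier cutoff $\varphi_k(\xi)$ defeats the $|\xi|^{-2}$ singularity), very large $k$ (where the $2^{-nj_2}\tilde M_n(t)$ decay in (\ref{may31eqn1}) dominates), and $j_1\geq \alpha^\star M_t+\epsilon M_t$ (where the hypothesis $\alpha_s M_s\leq \alpha^\star M_t$ combined with (\ref{nov24eqn27})--(\ref{outsidemajority}) forces $f$ to be negligibly small on that horizontal velocity annulus). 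After these reductions only a logarithmic-in-$M_t$ set of triples $(k,j_1,j_2)$ survives, so it suffices to prove the bound for a single admissible triple and absorb the logarithmic loss into the $2^{\epsilon M_t}$ factors.

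For a fixed admissible triple I would write $E_{k;j_1,j_2}(s,X(s))$ via Fourier inversion and isolate the oscillating factor $e^{i\xi\cdot X(s)}$, whose derivative along the characteristic is $\frac{d}{ds}e^{i\xi\cdot X(s)}=i\xi\cdot\hat V(s)\,e^{i\xi\cdot X(s)}$. Integration by parts in $s$ against $1/(i\xi\cdot\hat V(s))$ converts the $s$-integral into a boundary term at $s=t_1,t_2$ plus volume terms where $\partial_s$ lands on either $C(\slashed X(s),V(s))$, on the truncated density $\rho_{j_1,j_2}(s,y):=\int f(s,y,v)\varphi_{j_1}(\slashed v)\varphi_{j_2}(v)\,dv$, or on the amplitude $1/(\xi\cdot\hat V(s))$ itself. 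The time derivative of $\rho_{j_1,j_2}$ is handled by multiplying the Vlasov equation by $\varphi_{j_1}\varphi_{j_2}$ and integrating in $v$; this yields a divergence $-\nabla_x\cdot\int\hat v f\varphi_{j_1}\varphi_{j_2}\,dv$ that trades a $v$-factor for an additional $\xi$-factor in Fourier, plus a lower-order commutator term $\int f\,E\cdot\nabla_v(\varphi_{j_1}\varphi_{j_2})\,dv$ supported on the dyadic velocity boundaries, which is recycled through (\ref{jan12eqn36}) and (\ref{june2eqn71}). The derivatives of $C$ and of $1/(\xi\cdot\hat V)$ are bounded by (\ref{assumptiononcoefficients}) and the acceleration estimate (\ref{june2eqn71}) respectively.

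The main obstacle is that $\xi\cdot\hat V(s)$ degenerates when $\xi$ is nearly orthogonal to $V(s)$; since in our regime $|\hat V(s)|\approx 1$ while $|\hat{\slashed V}(s)|\sim 2^{(\gamma_1-\gamma_2)M_t}$, the resonant set is the cone where $|\xi_3|+|\slashed \xi|\,2^{(\gamma_1-\gamma_2)M_t}$ is small. I would split $\xi$ into this resonant cone and its complement: on the complement the integration by parts yields the expected gain $1/|\xi\cdot\hat V|$ and produces the $2^{-2\epsilon M_t}$ improvement over the trivial bound, while on the resonant cone the restricted angular measure compensates for the absent oscillatory gain, after combining with the weighted space-time estimate (\ref{jan12eqn36}). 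The four exponents $b\in\{0,2/3,5/6,1\}$ in (\ref{projest}) reflect interpolation of the spatial weight $|\slashed X(s)|^{-b}=2^{-ba_p}$ supplied by (\ref{may31eqn2}) and (\ref{jan12eqn36}) against the $k$-dependent oscillatory gain across four complementary regimes of $k$ relative to the thresholds fixed by $j_1,j_2,\alpha^\star$. Finally, the boundary contribution from $s=t_1,t_2$, after summing over admissible $k$ and balancing against the $L^\infty$ bound of Lemma \ref{roughgeneral}, produces the additive term $2^{(3\alpha^\star+\epsilon)M_t/4}$ in (\ref{projest}).
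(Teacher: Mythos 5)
Your opening reductions (dyadic decomposition of $E$ via (\ref{june9eqn2}), elimination of extreme $k$, $j_2$, and of $j_1\gtrsim \alpha^\star M_t$ through (\ref{outsidemajority}), acceptance of a logarithmic loss in the number of surviving triples) match the paper and are fine. The gap is in the core oscillatory step. You integrate by parts in $s$ against the phase $e^{i\xi\cdot X(s)}$ alone, with divisor $\xi\cdot \hat V(s)$, and let $\partial_s$ fall on the truncated density $\rho_{j_1,j_2}$. But the density is not slowly varying in time: under free transport $\widehat f(s,\xi,v)$ oscillates at frequency $\hat v\cdot\xi$, so $\partial_s$ of the density produces the current term $-\nabla_x\cdot\int \hat v f\,\varphi_{j_1}\varphi_{j_2}dv$, i.e.\ an extra factor of size $|\xi\cdot\hat v|\sim 2^{k}$ (since $|\hat v|\approx 1$ in the relevant regime $j_2$ large). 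This exactly cancels the best-possible gain $|\xi\cdot\hat V(s)|^{-1}\geq |\xi|^{-1}$ from the integration by parts, so even on your ``non-resonant'' cone you recover a term of the same size as the one you started from; the claimed $2^{-2\epsilon M_t}$ improvement is not obtained, and iterating does not help. Your phrase ``trades a $v$-factor for an additional $\xi$-factor'' is precisely the problem: the $v$-factor carries no smallness, the $\xi$-factor is a full loss of $2^k$.

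The paper's proof avoids this by removing the free-transport oscillation before integrating by parts: setting $g(s,x,v):=f(s,x+s\hat v,v)$, the phase becomes $e^{i(X(s)-s\hat v)\cdot\xi}$ with divisor $(\hat V(s)-\hat v)\cdot\xi$, and now $\partial_s\widehat g$ contains only the field term $\nabla_x\phi\cdot\nabla_v f$, which is genuinely lower order (after an integration by parts in $v$ it is controlled through the $L^1_tL^\infty_x$ and pointwise bounds of Lemmas \ref{spacetimeest}--\ref{roughgeneral}). Correspondingly, the resonance analysis is not a cone decomposition of $\xi$ relative to $V(s)$, as you propose, but a decomposition in the relative direction $\theta_{V(s)}(v)=(\hat V(s)-\hat v)/|\hat V(s)-\hat v|$: one localizes $|\tilde v-\tilde V(s)|$ at scale $2^{l_1}$, the angle $\theta_{V(s)}(v)\cdot\tilde\xi$ at scale $2^{l_2}$, and, crucially, the azimuthal angle between $\theta_{V(s)}(v)$ and $(-X_2(s),X_1(s),0)/|\slashed X(s)|$ at scale $2^{\alpha}$; the near-resonant pieces are then handled by the spherical-cap measure bound (\ref{april29eqn1}), the cylindrical symmetry of $f$, and the singular weight $|\slashed x|^{-1+2\epsilon^\star}$ in (\ref{jan12eqn36}). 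It is this three-parameter balancing (the choices (\ref{2022jan18eqn33})) that produces the exponents $b\in\{0,2/3,5/6,1\}$ and the additive boundary contribution $2^{(3\alpha^\star+\epsilon)M_t/4}$ in (\ref{projest}); your sketch has no substitute for it, and without the $g$-substitution the argument does not close.
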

\begin{proof}
See section \ref{mainlemma1}.  
\end{proof}

The key ingredient of the second induction in (\ref{logic}) is the following Lemma, in which we  use both the smoothing effect and the hyperbolic natural of the sRVM system (\ref{sRVM}) by using the extra equation. 

\begin{lemma}\label{bootstraplemma2}
Let $t\in [0, T), a_p\in \Z,  \alpha^{\star}=7/10+\iota, \iota:=10^{-2}$,  $ \gamma \in[  1-3\epsilon, 1]$,   $t_1, t_2\in [0, t]$ be fixed, s.t., $M_t\gg 1, $  $\forall s\in [t_1, t_2], |\slashed X(s)|\sim 2^{a_p }, |  V(s)|\sim 2^{\gamma  M_t} ,  \alpha_s M_s \leq \alpha^{\star}M_t $.  Then the following estimate holds, 
\be\label{2022jan18eqn51}
\big|\int_{t_1}^{t_2} {V}(s)  \cdot   E(s, X(s) ) d s \big|\lesssim     2^{   (2\gamma-5\epsilon) M_t }  .
\ee
\end{lemma}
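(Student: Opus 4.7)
The challenge is that the trivial bound $|V(s)|\cdot\int_{t_1}^{t_2}\|E\|_{L^\infty_x}\,ds\lesssim 2^{(\gamma+1+5\epsilon)M_t}$ from Lemma \ref{roughgeneral} exceeds the target $2^{(2\gamma-5\epsilon)M_t}$ by $2^{10\epsilon M_t}$ when $\gamma\approx 1$; this modest but definite loss must be recovered by using the extra equation of sRVM (unavailable in RVP). I plan to first dyadically decompose $E=\sum_{k,j_1,j_2}E_{k;j_1,j_2}$ as in \eqref{june9eqn2} and dispose of the tails---pieces with $j_2\geq(1+\epsilon)M_t$, $j_1\geq\alpha^\star M_t+\epsilon M_t$, or $|k|\geq CM_t$---by combining \eqref{may31eqn1}--\eqref{may31eqn2} with the out-of-majority decay \eqref{outsidemajority}, exactly as in \eqref{2022jan18eqn30}--\eqref{2022jan18eqn31}. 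This reduces matters to a polylog family of dyadic triples $(k,j_1,j_2)$.

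The main idea is to convert $E$ into a solution of a wave equation. Differentiating the extra equation $\partial_tE=-4\pi j$ in time and combining with the identity $\Delta E=4\pi\nabla\rho$ (from $E=\nabla\phi$ and $\Delta\phi=4\pi\rho$) yields
\[
(\partial_t^2-\Delta)E=-4\pi(\partial_tj+\nabla\rho),
\]
and a direct computation using the Vlasov equation rewrites the source as
\[
(\partial_tj+\nabla\rho)^i=\partial_k\!\!\int\!\sqrt{1+|v|^2}\,(\partial_{v^k}\hat v^i)\,f\,dv+E^k\!\!\int(\partial_{v^k}\hat v^i)\,f\,dv,
\]
where $\partial_{v^k}\hat v^i=(\delta^{ik}(1+|v|^2)-v^iv^k)/(1+|v|^2)^{3/2}$. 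Duhamel's formula together with the Kirchhoff identities \eqref{dec20eqn1}--\eqref{march4eqn100} would then represent each $E_{k;j_1,j_2}(s,x)$ as a free-data term (bounded by the initial-data norms) plus a retarded spherical mean of $P_k(\partial_tj+\nabla\rho)_{j_1,j_2}$.

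Substituting this representation into $\int_{t_1}^{t_2}V(s)\cdot E_{k;j_1,j_2}(s,X(s))\,ds$ and unfolding the frequency projection produces an oscillatory factor $e^{i\xi\cdot(X(s)+(s-\tau)\theta)}\varphi_k(\xi)$ whose phase has $s$-derivative $\xi\cdot\hat V(s)+\xi\cdot\theta$. Since $|V(s)|\sim 2^{\gamma M_t}$ forces $|\hat V(s)|=1-O(2^{-2\gamma M_t})$, this derivative has size $\gtrsim 2^k$ outside an angular set of measure $O(2^{-\gamma M_t})$ about $\theta=-\hat V(s)$: a normal-form integration by parts in $s$ on the non-resonant sector gives the smoothing factor $2^{-k}$, while on the small resonant sector the sphere measure itself supplies a comparable gain. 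In addition, the contraction $V\cdot\partial_{v^k}\hat v$ is proportional to $|V||v|^{-1}\sin(\angle(V,v))$, providing further cancellation precisely on the set where source particles co-move with the characteristic. I would combine these gains with the amplitude bounds \eqref{june9eqn10}, \eqref{june14eqn21}, and---to handle spheres brushing the $x_3$-axis---with the singular weighted estimate \eqref{jan12eqn36} of Lemma \ref{spacetimeest} together with the hypothesis $\alpha_sM_s\leq\alpha^\star M_t$, concluding that each of the polylog many dyadic pieces contributes at most $2^{(2\gamma-6\epsilon)M_t}$, summing to the claim.

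The principal obstacle I anticipate is the quasi-linear nature of the wave equation: the second term in the source reproduces $E$ inside the spherical mean, so a single Duhamel iteration is not enough. I would resolve this by a bootstrap---estimating the linear-in-$f$ (Vlasov-flux) part of the source first, then feeding the resulting $L^1_tL^\infty_x$ control of $E$ from \eqref{june14eqn21} back into the quadratic piece and verifying that the null contraction $V\cdot\partial_{v^k}\hat v$ gains an additional $2^{-\epsilon M_t}$ on each pass. This iteration is a simplified version of the scheme employed, in considerably more elaborate form, for the full RVM system in \cite{wang3}.
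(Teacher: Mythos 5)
You have the paper's central mechanism right: use the extra equation to derive a wave equation for (a piece of) $E$, rewrite the source with the Vlasov equation (your formula for $\partial_t j+\nabla\rho$ is exactly the paper's $\mathcal{N}^1_{k,\tilde k}+\mathcal{N}^2_{k,\tilde k}$ in \eqref{june10eqn67}--\eqref{june10eqn31}), then Duhamel/Kirchhoff and a normal form in $s$. The genuine gap is in your resonance analysis. Whichever way you set it up, the $s$-derivative of the phase is $|\xi|+\hat{V}(s)\cdot\xi$ (half-wave version) or $\xi\cdot(\hat{V}(s)+\theta)$ (your Kirchhoff-unfolded version), and its degenerate set is \emph{not} a cap of measure $O(2^{-\gamma M_t})$ around $\theta=-\hat{V}(s)$: for your phase it is a neighborhood of the whole circle $\{\theta:\ \xi\cdot\theta=-\xi\cdot\hat{V}(s)\}$, and for the half-wave phase it is the cone of frequencies $\xi$ nearly antiparallel to $V(s)$, i.e.\ (since $|\slashed{V}(s)|\lesssim 2^{\alpha^{\star}M_t}\ll |V(s)|\sim 2^{\gamma M_t}$) nearly parallel to the $x_3$-axis. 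The ``sphere-measure gain'' you invoke is absent there, and the null contraction $V\cdot\nabla_v\hat{v}$ cannot help because the degeneracy is in $\xi$, not in $v$. The paper's proof turns on a step your proposal lacks: first discard $\slashed{V}(s)\cdot\slashed{E}$ outright using $|\slashed{V}(s)|\lesssim 2^{\alpha^{\star}M_t}$ (estimate \eqref{2022jan20eqn71}), then decompose $E_3$ according to the size $2^{\tilde k}$ of the horizontal frequency $\slashed{\xi}$ as in \eqref{june10eqn43}; the nearly vertical frequencies $\tilde{k}\leq k-30\epsilon M_t$ are handled with no wave equation at all, by cylindrical symmetry plus the singular weighted space-time estimate \eqref{jan12eqn36} of Lemma \ref{spacetimeest} (giving \eqref{2022jan20eqn72}), and only for $\tilde{k}\geq k-30\epsilon M_t$ is the hyperbolic structure used, where $|\tilde{\xi}\times\tilde{V}(s)|\gtrsim 2^{-30\epsilon M_t}$ (see \eqref{june11eqn31}) makes the normal-form denominator $|\xi|+\hat{V}(s)\cdot\xi$ of size comparable to $2^k$ up to $\epsilon M_t$ losses. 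Without this angular split of $\xi$ relative to the symmetry axis, your integration by parts in $s$ fails on the vertical-frequency sector and the proposal offers no substitute mechanism there.

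Two secondary points. The ``quasilinear bootstrap'' you anticipate is not needed: the $E$-dependent source $\mathcal{N}^2$ is estimated directly from the rough bounds \eqref{june14eqn21}, \eqref{june2eqn71} together with the smallness of $\nabla_v\hat{v}_3$ (of size $2^{\tilde{j}-2j}$), with no re-insertion loop. Conversely, you underestimate what remains after the normal form: the endpoint terms and the terms where $\partial_s$ hits $V_3(s)\big(|\xi|+\hat{V}(s)\cdot\xi\big)^{-1}$ are where most of the work lies (the terms $\mathfrak{H}^{j;2}_{k,\tilde k}$, $\mathfrak{High}^{j}_{k,\tilde k}$, $H^{j;2}_{k,\tilde k}$, $High^{j}_{k,\tilde k}$, treated in Lemmas \ref{keylemma2} and \ref{keylemma3}), and they require further dyadic localization in $t_a-\tau$ and in the angle $\angle(\xi,v)$, a second integration by parts in $\tau$, and repeated use of cylindrical symmetry and \eqref{jan12eqn36}; asserting that each dyadic piece contributes $2^{(2\gamma-6\epsilon)M_t}$ does not engage with these terms.
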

\begin{proof}
See section \ref{mainlemma2}.  
\end{proof}

\section{Proof of Lemma \ref{horprojlemma}}\label{mainlemma1}
 
We first rule out the case $|\slashed X(s)|$ is very small, If $a_p\leq 2(\gamma_1-\gamma_2)M_t-20\epsilon M_t$, then from the estimate (\ref{june2eqn71}) in Lemma \ref{roughgeneral}, we have 
\[
 \big|\int_{t_1}^{t_2} C(\slashed X(s), V(s)) \cdot  E(s, X(s) ) d s \big|\lesssim 2^{\alpha^{\star}M_t + 2\epsilon M_t} 2^{-a_p/2} \mathcal{M}(C) (t_2-t_1)\lesssim 2^{-a_p +(\gamma_1-\gamma_2)M_t}2^{(\alpha^{\star}-5\epsilon )M_t}\mathcal{M}(C) (t_2-t_1).
\]

\textbf{Now, it would be sufficient to consider the case $a_p\geq 2(\gamma_1-\gamma_2)M_t-20\epsilon M_t$}. After doing dyadic decomposition for the size of frequency, the size of $\slashed v$, and $v$, we have
\begin{multline}
\big|\int_{t_1}^{t_2} C(\slashed X(s), V(s)) \cdot  E(s, X(s) ) d s \big|\lesssim \sum_{k,j_2\in \Z_+, j_1\in[0, j_2+2]\cap \Z} \big|H_{k;j_1,j_2}(t_1, t_2)\big|,\\  
H_{k;j_1,j_2}(t_1, t_2)= \int_{t_1}^{t_2} C(\slashed X(s), V(s)) \cdot  E_{k;j_1,j_2}(s, X(s) ) d s,
\end{multline}
where $E_{k;j_1,j_2}(\cdot, \cdot)$ is defined in (\ref{june9eqn2}). From the obtained estimates (\ref{2022jan18eqn30}) and (\ref{2022jan18eqn31}), we know that it would be sufficient to consider the case fixed $k,j_1, j_2\in \Z_+$, s.t.,  $k\in [0,3M_t + 5\epsilon M_t]\cap \Z$,  $j_2\in [0, (1+\epsilon)M_t]\cap \Z $ and $j_1\in [0, j_2+2]\cap \Z$.  Moreover, based on the possible sizes of $k, j_1, j_2$, we split into three sub-cases as follows. 

$\bullet$ If $k\leq j_2 + (\gamma_1-\gamma_2 + \alpha^{\star}-5\epsilon)M_t$ or $k\geq 2j_1+j_2-(\alpha^{\star}-5\epsilon)M_t.$

From the estimate (\ref{may31eqn1}) in Lemma \ref{roughesti}, we have 
\be\label{2022jan18eqn21}
\big|H_{k;j_1,j_2}(t_1, t_2)\big|\lesssim \min\{2^{-k+j_2+2j_1},  2^{k-j_2+\epsilon M_t} 2^{-a_p}\} \mathcal{M}(C)(t_2-t_1)\lesssim \sum_{b\in\{0,1\}} 2^{-b a_p} 2^{b(\gamma_1-\gamma_2)M_t +(\alpha^{\star}-5\epsilon)M_t}. 
\ee

$\bullet$ If $k\in [ j_2 + (\gamma_1-\gamma_2 + \alpha^{\star}-5\epsilon)M_t,2j_1+j_2-(\alpha^{\star}-5\epsilon)M_t ]$, $j_1\leq (\gamma_1-\gamma_2)M_t/2 +(\alpha^{\star}-6\epsilon)M_t.  $

From the estimate (\ref{may31eqn2}) in Lemma \ref{roughesti}, we have
\be\label{2022jan18eqn22}
\big|H_{k;j_1,j_2}(t_1, t_2)\big| \lesssim 2^{-a_p/2 +j_1 +\epsilon M_t } \mathcal{M}(C)(t_2-t_1) \lesssim 2^{-a_p/2} 2^{(\gamma_1-\gamma_2)M_t/2 +(\alpha^{\star}-5\epsilon)M_t}\mathcal{M}(C)(t_2-t_1) . 
\ee

$\bullet$ If $k\in [ j_2 + (\gamma_1-\gamma_2 + \alpha^{\star}-5\epsilon)M_t,2j_1+j_2-(\alpha^{\star}-5\epsilon)M_t ]$ and  $j_1\geq (\gamma_1-\gamma_2)M_t/2 +(\alpha^{\star}-6\epsilon)M_t $.

Let $\theta_{V(s)}(v):=(\hat{V}(s)-\hat{v})/|\hat{V}(s)-\hat{v}|$. Based on the possible size of  $\theta_{V(s)}(v)\cdot\xi/|\xi| $,  $|\hat{V}(s)-\hat{v}|$, and the angle between   $ (-X_2(s), X_1(s),0)/|\slashed X(s)|$ and $ \theta_{V(s)}(v)$,  we decompose  $H_{k;j_1,j_2}(t_1, t_2)$ into three parts as follows, 
\be 
H_{k;j_1,j_2}(t_1, t_2)= \sum_{i=1,2,3} \int_{t_1}^{t_2}   H_{k,j_1,j_2}^i(s ) d s,  
\ee
where
\be\label{2022jan16eqn1}
 H_{k,j_1,j_2}^1(s )=  \int_{\R^3} \int_{\R^3 } e^{i X(s)\cdot \xi } i  C(\slashed X(s), V(s)) \cdot  \xi |\xi|^{-2}  \widehat{f}(s ,\xi ,v)\varphi_{k;j_1,j_2}(v, \xi)  \varphi_{l_1,\alpha}(v, X(s), \tilde{V}(s))d v d \xi ,
\ee
\be\label{2022jan16eqn2}
   H_{k,j_1,j_2}^2(s )=  \int_{\R^3} \int_{\R^3 } e^{i X(s)\cdot \xi }   i  C(\slashed X(s), V(s)) \cdot   \xi |\xi|^{-2} \widehat{f}(s ,\xi ,v)\varphi_{k;j_1,j_2}(v, \xi)  \psi_{\leq l_2 }(  \theta_{V(s)}(v)\cdot \tilde{\xi} ) \big(1- \varphi_{l_1,\alpha}(v, X(t), \tilde{V}(s))\big)  d v d \xi ,
  \ee
\be\label{2022jan16eqn3}
 H_{k,j_1,j_2}^3(s )=  \int_{\R^3} \int_{\R^3 } e^{i X(s)\cdot \xi }   i   C(\slashed X(s), V(s)) \cdot  \xi |\xi|^{-2} \widehat{f}(s ,\xi ,v) \varphi_{k;j_1,j_2}(v, \xi)   \psi_{> l_2 }(  \theta_{V(s)}(v)\cdot \tilde{\xi} ) \big(1- \varphi_{l_1,\alpha}(v, X(s), \tilde{V}(s))\big)  d v d \xi,
  \ee
where $\varphi_{k;j_1,j_2}(v, \xi):=  \varphi_{j_1}(\slashed v)\varphi_{j_2}(v)  \varphi_k(\xi)$ the cutoff function $ \varphi_{l_1,\alpha}(v, \tilde{V}(t))$ is defined as follows, 
 \be\label{cutofffunc}
 \varphi_{l_1,\alpha}(v, X(s), \tilde{V}(s)):= \psi_{\leq l_1  }(|\tilde{v}-\tilde{V}(s)|) + \psi_{>l_1  }(|\tilde{v}-\tilde{V}(s)|)\psi_{\leq \alpha} (\theta_{V(s)}(v)\times (-X_2(s), X_1(s),0)/|\slashed X(s)|), 
 \ee
 and the thresholds of cutoff functions are chosen as follows, 
 \be\label{2022jan18eqn33}
l_1:= (\alpha^{\star} M_t+(\gamma_1-\gamma_2)M_t-a_p +k-3j_2)/2- 5\epsilon M_t, \quad \alpha = \frac{2}{3}l_1, \quad l_2:= (\gamma_1-\gamma_2+\alpha^{\star}-10\epsilon)M_t+j_2-k+\alpha. 
\ee

$\oplus$\qquad The estimate of $ H_{k,j_1,j_2}^1 (s ).$

Recall (\ref{2022jan16eqn1}).  As in \cite{wang2}[Lemma 4.3], for any fixed  $a, b\in \mathbb{S}^2, a\neq b,$ $0< \epsilon \ll 1$, we have
 \be\label{april29eqn1}
S_{a,b}:=\{c: c\in \mathbb{S}^2, |(a-c)\times b|\leq \epsilon\}, \quad |S_{a,b}|\lesssim \epsilon^{3/2}, \quad |a-c|\lesssim \epsilon^{1/2}.
\ee  
From  the above estimate and  the volume of support of $v$, we have
\be\label{june2eqn61}
\big|\int_{t_1}^{t_2}   H_{k,j_1,j_2}^1(s ) d s\big| \lesssim 2^{\epsilon M_t} \min\{ 2^{-k+3j_2+ 2l_1} + 2^{-k+3j_2+ 3\alpha}  \}\lesssim 2^{-a_p + (\gamma_1-\gamma_2)M_t} 2^{(\alpha^{\star}-6\epsilon)M_t}. 
\ee

$\oplus$\qquad The estimate of $ H_{k,j_1,j_2}^2 (s ).$

Note that, in terms of kernel, we have
 \be\label{april28eqn11}
    H_{k,j_1,j_2}^2(s )   = \int_{\R^3} \int_{\R^3} f(s,X(s)-y, v) \tilde{K}_{k,l_2}(y, v,V(s))  \big(1- \varphi_{l_1,\alpha}(v, X(s), \tilde{V}(s))\big) \varphi_{j_1}(\slashed v)\varphi_{j_2}(v) d v d y, 
 \ee
 where
 \[
 \tilde{K}_{k,l_2}(y, v,V(s)) = \int_{\R^3 } e^{i y\cdot \xi} i \xi |\xi|^{-2} \varphi_k(\xi)   \psi_{\leq l_2 }(  \theta_{V(s)}(v)\cdot \tilde{\xi} )  d \xi.  
 \]
 By doing integration by parts in $\theta_{V(s)}(v)$ direction and directions perpendicular to  $(\theta_{V(s)}(v))^{\bot}$,  we have the following estimate  for the kernel $ \tilde{K}_{k,l_2}(y, v,V(s))$,
 \be\label{april28eqn1}
 | \tilde{K}_{k,l_2}(y, v,V(s))| \lesssim 2^{2k+ l_2}(1+ 2^{k+l_2}|y\cdot \theta_{V(s)}(v)|)^{-N_0^3} (1+2^k|y\times \theta_{V(s)}(v)|)^{-N_0^3}.
 \ee
 From the above estimate, we know that ``$y$'' is localized inside a cylinder with base in the plane perpendicular to $\theta_{V(s)}(v)^{}$. Due to the cutoff function $ (1- \varphi_{l_1,\alpha}(v, X(s), \tilde{V}(s)) )  $ in (\ref{april28eqn11}),  the angle between $\theta_{V(s)}(v)$ and $ (-X_2(s), X_1(s),0)/|\slashed X(s)|)$ is greater than $2^{\alpha}$, which means that the intersection of the cylinder with any $x_1x_2$ plane  is less than $(2^{-k-\alpha})^2.$ 

Note that $a_p \geq -k-l_2+10 \epsilon M_t.$ Hence, from the cylindrical symmetry of solution and the above estimate, we have
\[
  \int_{t_1}^{t_2} \big| H_{k,j_1,j_2}^2(s)\big|  ds   \lesssim  2^{2k+l_2}   \int_{t_1}^{t_2}  \int_{\R^3} \int_{\R^3} (1+ 2^{k+l_2}|y\cdot \theta_{V(s)}(v)|)^{-N_0^3} (1+2^k|y\times \theta_{V(s)}(v)|)^{-N_0^3} f(s, X(s)-y, v) \varphi_{j_1}(\slashed v)
 \]
\[ 
 \times\varphi_{j_2}(v) \big(1- \varphi_{l_1,\alpha}(v, X(s), \tilde{V}(s))\big)  d v d y  ds  \lesssim 1 + \int_{t_1}^{t_2}  \int_{\R^3} \int_{\R^3}  \frac{2^{2k+l_2-k-\alpha +\epsilon M_t/10}}{|\slashed X(s)- \slashed y|} f(s, X(s)-y, v)\varphi_{j_1}(\slashed v)\varphi_{j_2}(v) d v d y   ds
\] 
 \be\label{june2eqn11}
  \lesssim 1+ 2^{2k+l_2-k-\alpha +\epsilon M_t/10}2^{-j_2 } 2^{-a_p }\lesssim 2^{-a_p + (\gamma_1-\gamma_2+\alpha^{\star}-6\epsilon)M_t }. 
\ee

$\oplus$\qquad The estimate of $ H_{k,j_1,j_2}^3 (s ).$

Recall (\ref{2022jan16eqn3}).   Note that, for any $(v,\xi)\in supp( \psi_{> l_2 }(  \theta_{V(s)}(v)\cdot \tilde{\xi} ) (1- \varphi_{l_1,\alpha}(v, X(s), \tilde{V}(s))) )$, c.f., (\ref{cutofffunc}), we have
 \be\label{april28eqn13}
|\hat{V}(s)-\hat{v}|\gtrsim |\tilde{V}(s)-\tilde{v}|-  2^{-2M_t + 4\epsilon M_t}\gtrsim 2^{l_1}, \quad \Longrightarrow 
|\hat{V}(t)\cdot\tilde{\xi} - \hat{v}\cdot \tilde{\xi}|= |\hat{V}(t)-\hat{v}||\theta_{V(t)}(v)\cdot \tilde{\xi}|\gtrsim 2^{l_1+l_2}. 
\ee

Let $g(s,x,v):= f(s, x+ s\hat{v}, v).$ For $H_{k,j_1,j_2}^3(s)$, we do integration by parts in $s$ once. As a result, we have
\be\label{june2eqn54}
  \big|\int_{t_1}^{t_2} H_{k,j_1,j_2}^3(s)d s \big|\lesssim  \big| End_{k,j_1,j_2}(t_1, t_2) \big| +  \big|\widetilde{H}_{k,j_1,j_2}^1(t_1,t_2) \big| +   \big|\widetilde{H}_{k,j_1,j_2}^2(t_1,t_2) \big|,
\ee
where
\[
 End_{k,j_1,j_2}(t_1, t_2):= \sum_{a=1,2} \int_{\R^3} \int_{\R^3 } e^{i X(t_a)\cdot \xi- i  t_a \hat{v}\cdot \xi}   C(\slashed X(t_a), V(t_a))\cdot \xi |\xi|^{-2}  \hat{g}(t_a ,\xi ,v)     \varphi_{k;j_1,j_2}(v,\xi)     (\hat{V}(t_a)\cdot\xi - \hat{v}\cdot \xi )^{-1} 
\]
\[
\times   \psi_{> l_2 }(  \theta_{V(t_a)}(v)\cdot \tilde{\xi} )  \big(1-  \varphi_{l_1,\alpha}(v, X(t_a), \tilde{V}(t_a))\big)    d v d \xi
\]
 \be\label{april28eqn14}
=  \sum_{a=1,2}  \sum_{ n\in[l_1,2]\cap \mathbb{Z}}    \sum_{ l\in[l_2,2]\cap \mathbb{Z}} \int_{\R^3} \int_{\R^3} f(t_a, X(t_a)-y, v) \widetilde{K}^0_{k,n,l }(y, X(t_a),  V(t_a), v)  \varphi_{j_1}(\slashed v)\varphi_{j_2}(v) d y d v, 
\ee
\[
 \widetilde{H}_{k,j_1,j_2}^1(t_1, t_2):= \int_{t_1}^{t_2}  \int_{\R^3} \int_{\R^3 } e^{i X(s)\cdot \xi- i s \hat{v}\cdot \xi}     C(\slashed X(s), V(s)) \cdot \xi |\xi|^{-2} \varphi_{k,j_1,j_2}(v, \xi )  \p_s  \hat{g}(s ,\xi ,v)   \big(1-  \varphi_{l_1,\alpha}(v, X(s ), \tilde{V}( s))\big) 
  \]
 \be\label{april28eqn15}
  \times   (\hat{V}(s)\cdot\xi - \hat{v}\cdot \xi )^{-1}   \psi_{> l_2 }(  \theta_{V(s)}(v)\cdot \tilde{\xi} ) d v d \xi d s,  
\ee
\[
 \widetilde{H}_{k,j_1,j_2}^2(t_1, t_2):=\int_{t_1}^{t_2} \int_{\R^3} \int_{\R^3 } e^{i X(s)\cdot \xi }  \p_s \big(   C(\slashed X(s), V(s))\cdot  \xi |\xi|^{-2}  \psi_{> l_2 }(  \theta_{V(s)}(v)\cdot \tilde{\xi} )(\hat{V}(s)\cdot\xi - \hat{v}\cdot \xi )^{-1} \big(1-  \varphi_{l_1,\alpha}(v, X(s), \tilde{V}(s))\big) \big) 
      \]
  \[
  \times   \varphi_{k;j_1,j_2}(v,\xi)   \varphi_{j_1}(\slashed v)\varphi_{j_2}(v) \hat{f}(s ,\xi ,v) d v d \xi d s 
\]
 \be\label{april28eqn16}
  =  \sum_{ n\in[l_1,2]\cap \mathbb{Z}}    \sum_{ l\in[l_2,2]\cap \mathbb{Z}} \int_{s_1}^{s_2} \int_{\R^3} \int_{\R^3} f(s, X(s)-y, v) \widetilde{K}^1_{k,n,l}(y,  X(s),  V(s), v)   \varphi_{j_1}(\slashed v)\varphi_{j_2}(v) d y d v d s,
\ee
where the kernels $\widetilde{K}^i_{k,l_1,l_2}(y, \slashed X(s), V(s), v), i\in\{0,1\},$ are defined as follow,
\be\label{april28eqn21}
\widetilde{K}^0_{k,n,l   }(y,   X(s),  V(s), v):= \int_{\R^3} e^{i y \cdot \xi} \frac{ \varphi_k(\xi)  C(\slashed X(s), V(s))\cdot  \xi}{ |\xi|^{2} (\hat{V}(s)\cdot\xi - \hat{v}\cdot \xi )}       \psi_{  l  }(  \theta_{V(t_a)}(v)\cdot \tilde{\xi} )  \psi_{n}(|\tilde{v}-\tilde{V}(t_a)|)     \big(1-  \varphi_{l_1,\alpha}(v, X(s ), \tilde{V}( s))\big)d \xi, 
\ee
\[
\widetilde{K}^1_{k, n,l  }(y,  X(s), V(s), v):= \int_{\R^3} e^{i y \cdot \xi} \p_s \big( \frac{ \varphi_k(\xi)  C(\slashed X(s), V(s))\cdot  \xi}{ |\xi|^{2} (\hat{V}(s)\cdot\xi - \hat{v}\cdot \xi )}      \psi_{> l_2 }(  \theta_{V(s)}(v)\cdot \tilde{\xi} )
\]
\be\label{april28eqn22}
   \times \psi_{  l  }(  \theta_{V(s)}(v)\cdot \tilde{\xi} )       \psi_{n}(|\tilde{v}-\tilde{V}(s)|)  \big(1-  \varphi_{l_1,\alpha}(v, X(s), \tilde{V}(s))\big) \big)  d \xi.
\ee

By using the same strategy as in the obtained estimate (\ref{june2eqn11}), we have 
\[
\big|  End_{k,j_1,j_2}(t_1, t_2)\big| \lesssim \sum_{a=1,2} \sum_{ n\in[l_1,2]\cap \mathbb{Z}}    \sum_{ l\in[l_2,2]\cap \mathbb{Z}} \int_{\R^3} \int_{\R^3} f(s_a, X(t_a)-y, v)  2^{k -n}   \varphi_{j_1}(\slashed v)\varphi_{j_2}(v)  (1+2^{k+l}|y\cdot\theta_{V(t_a)}(v) |)^{- N_0^3} \] 
\[
\times  (1+2^{k }|y\times \theta_{V(t_a )}(v) |)^{- N_0^3 }   \mathcal{M}(C)  d y d v \lesssim 2^{5\epsilon M_t} 2^{-l_1-\alpha - j_2 -a_p  } \mathcal{M}(C) \lesssim 2^{-a_p/6+2j_2/3-5(\alpha^{\star}+\gamma_1-\gamma_2)M_t/3}\mathcal{M}(C)
\]
\be\label{2022jan16eqn20}
 \lesssim 2^{-a_p/6+7M_t/3-10 \alpha^{\star} M_t/3+50\epsilon M_t}\mathcal{M}(C) \lesssim 2^{  \alpha^{\star} M_t/2} \mathcal{M}(C).
\ee
 
\noindent $\dagger$\quad The estimate of $  \widetilde{H}_{k,j_1,j_2}^1 (t_1,t_2)$.

Recall (\ref{april28eqn15}) and  (\ref{sRVM}). We have
\[
\p_s g(s,x,v) = \nabla_x\phi(s,x+s\hat{v})\cdot \nabla_v f(s,x+s\hat{v}, v ). 
\]
Hence, after doing integration by parts in $v$, we have 
 \be\label{april28eqn17}
\widetilde{H}_{k,j_1,j_2}^1 (t_1,t_2):=  \sum_{ n\in[l_1,2]\cap \mathbb{Z}}    \sum_{ l\in[l_2,2]\cap \mathbb{Z}}   \int_{t_1}^{t_2} \int_{\R^3} \int_{\R^3 }\int_{\R^3}    \widetilde{K}^{j_1,j_2}_{k,n,l}(y,\slashed X(s), V(s), v)\cdot E (s, X(s)-y) f(s,X(s)-y,v)  dy d v d s,
\ee
where the kernel  $    \widetilde{K}^{j_1,j_2}_{k,n,l}(y, \slashed X(s),V(s), v)$ is defined as follows, 
 \[
   \widetilde{K}^{j_1,j_2}_{k,n,l}(y, \slashed X(s), V(s), v):= \int_{\R^3} e^{i x\cdot \xi }   \nabla_v \big( \varphi_{j_1}(\slashed v)  \varphi_{j_2}(v) \big(1-  \varphi_{l_1,\alpha}(v, X(s ), \tilde{V}( s)) (\hat{V}(s)\cdot\xi - \hat{v}\cdot \xi )^{-1}  \psi_{l }(  \theta_{V(s)}(v)\cdot \tilde{\xi} )
 \]
 \be\label{april29eqn11}
  \times     \psi_{ n }(|\tilde{v}-\tilde{V}(s)|) \big)  C(\slashed X(s), V(s))\cdot   \xi |\xi|^{-2} \varphi_k(\xi)   d \xi.   
\ee

As a result of direct computation, by   doing integration by parts in $\theta_{V(s)}(v)$ direction and directions perpendicular to  $(\theta_{V(s)}(v))^{\bot}$,  we have
 \be\label{may5eqn51}
|   \widetilde{K}^{j_1,j_2}_{k,n,l}(y, \slashed X(s), V(s), v)| \lesssim 2^{k+ l}\big( 2^{ -2l-2n-j_2 } + 2^{ -l-n-j_1 } \big)   (1+2^{k+l}|y\cdot\theta_{V(s)}(v) |)^{- N_0^3 } (1+2^{k }|y\times \theta_{V(s)}(v) |)^{-  N_0^3 } \mathcal{M}(C)  .
\ee

From the above estimate of kernel in (\ref{may5eqn51}) and the rough estimate of the electric field (\ref{june2eqn71}) in Lemma \ref{roughgeneral}, the following estimate holds after using the same strategy as in the obtained estimate (\ref{june2eqn11}), \[
\big|\widetilde{H}_{k,j_1,j_2}^1 (t_1,t_2)\big|\lesssim \sum_{ n\in[l_1,2]\cap \mathbb{Z}}    \sum_{ l\in[l_2,2]\cap \mathbb{Z}}   \int_{t_1}^{t_2}  \mathcal{M}(C)   2^{k+ l}\big( 2^{ -2l-2n-j_2 } + 2^{ -l-n-j_1 } \big) \|  \nabla_x\phi (s, \cdot ) \|_{L^\infty_x } 
\]
\[
 \times \varphi_{j_1}(\slashed v)\varphi_{j_2}(v)  (1+2^{k+l}|y\cdot\theta_{V(s)}(v) |)^{- N_0^3 } (1+2^{k }|y\times \theta_{V(s)}(v) |)^{-  N_0^3 }  f(s,X(s)-y,v)  dy d v d s
 \]
\[
\lesssim  (2^{-2l_1-l_2 -j_2 } + 2^{-l_1-j_1})2^{-\alpha -j_2 -a_p} 2^{\alpha^{\star}M_t -a_p/2+10\epsilon M_t}   \mathcal{M}(C)(t_2-t_1)
\]
\be\label{2022jan16eqn21}
\lesssim 2^{30\epsilon M_t}\big(2^{14M_t/3-17\alpha^{\star}M_t/3} + 2^{-2a_p/3 + 7M_t/3-10\alpha^{\star}M_t/3-(\gamma_1-\gamma_2)M_t/2 }\big) \mathcal{M}(C)(t_2-t_1).
\ee

\noindent $\dagger$\quad The estimate of $  \widetilde{H}_{k,j_1,j_2}^2 (s_1,s_2)$.

Recall (\ref{april28eqn16}) and the definition of kernel $\widetilde{K}^1_{k, n,l  }(y,  X(s), V(s), v)$ in (\ref{april28eqn22}). As a result of direct computations, the following estimate holds for the kernel after doing integration by parts in $\xi$ in $\theta_{V(s)}(v)$ direction and $(\theta_{V(s )}(v))^{\bot}$ directions, 
\[ 
\big| \widetilde{K}^1_{k, n,l  }(y,  X(s), V(s), v)\big| \lesssim 2^{k+l-l-n} \big( 2^{-l-n-\gamma_2 M_t} |\nabla_x\phi(s, X(s)) | + 2^{-\alpha + (\gamma_1-\gamma_2)M_t-a_p }   \big) 
\]
\be\label{2022jan16eqn11}
 \times  (1+2^{k+l}|y\cdot\theta_{V(s)}(v) |)^{- N_0^3 } (1+2^{k }|y\times \theta_{V(s)}(v) |)^{-  N_0^3 }\mathcal{M}(C). 
\ee
 From the above estimate of kernel and the rough estimate of the electric field (\ref{june2eqn71}) in Lemma \ref{roughgeneral}, we have 
\[
\big|\widetilde{H}_{k,j_1,j_2}^2 (s_1,s_2)\big|\lesssim \sum_{ n\in[l_1,2]\cap \mathbb{Z}}    \sum_{ l\in[l_2,2]\cap \mathbb{Z}} \int_{t_1}^{t_2}  \mathcal{M}(C) 2^{k-n}  \big( 2^{-l-n-\gamma_2 M_t} \|\nabla_x\phi(s, \cdot)\|_{L^\infty_x} + 2^{-\alpha + (\gamma_1-\gamma_2)M_t-a_p }   \big)2^{-k-\alpha- j_2 -a_p +10\epsilon M_t } d s
\] 
 \[
\lesssim \big(2^{-2l_1-l_2+(\alpha^{\star}-\gamma_2)M_t-a_p/2} + 2^{-5l_1/3 + (\gamma_1-\gamma_2)M_t-a_p  }\big)   2^{-2l_1/3-j_2-a_p+15\epsilon M_t} \mathcal{M}(C)(t_2-t_1)
 \]
\[
 \lesssim 2^{20\epsilon M_t}\big( 2^{-2k/3+3j_2 -8\alpha^{\star} M_t/3 -5(\gamma_1-\gamma_2)M_t/3 +a_p/6}+ 2^{-7k/6 + 5j_2/2-7\alpha^{\star}M_t/6-(\gamma_1-\gamma_2)M_t/6-5a_p/6 }\big) \mathcal{M}(C)(t_2-t_1)
\]
\be\label{2022jan16eqn22}
\lesssim 2^{30\epsilon M_t}\big(  2^{14 M_t/3-17\alpha^{\star} M_t/3  +a_p/6} +  2^{8M_t/3-11\alpha^{\star}M_t/3 -5a_p/6}  \big)\mathcal{M}(C)(t_2-t_1).
\ee
Recall (\ref{june2eqn54}). To sum up, after combining the obtained estimates (\ref{2022jan16eqn20}),  (\ref{2022jan16eqn21}), and  (\ref{2022jan16eqn22}), we have 
\[
\big|   \int_{t_1}^{t_2} H_{k,j_1,j_2}^3(s)d s\big|\lesssim \mathcal{M}(C) \big( \sum_{b\in \{0,2/3, 5/6\} } 2^{-b a_p +b(\gamma_1-\gamma_2) M_t +(\alpha^{\star}-10\epsilon)M_t}(t_2-t_1) \big) + 2^{\alpha^{\star} M_t/2}\mathcal{M}(C). 
\]
After combining the above estimate, the estimate (\ref{june2eqn61}), and (\ref{june2eqn11}), we have
\[
\big| H_{k;j_1,j_2}(t_1, t_2)\big|\lesssim \mathcal{M}(C) \big( \sum_{b\in \{0,2/3, 5/6,1\} } 2^{-b a_p +b(\gamma_1-\gamma_2) M_t +(\alpha^{\star}-6\epsilon)M_t}(t_2-t_1) \big) + 2^{ \alpha^{\star} M_t/2}\mathcal{M}(C).
\]
Hence finishing the proof of our desired estimate (\ref{projest}).

\section{Proof of Lemma \ref{bootstraplemma2}}\label{mainlemma2}
 
From the obtained estimates (\ref{may31eqn1}), (\ref{2022jan18eqn30}) and (\ref{2022jan18eqn31}), we know that it would be sufficient to consider the case fixed   $k\in [4M_t/5-20\epsilon M_t,3M_t + 5\epsilon M_t]\cap \Z$.   Moreover, from the obtained  estimates (\ref{2022jan18eqn71}) and (\ref{2022jan18eqn30}), we have 
\be\label{2022jan20eqn71}
\int_{t_1}^{t_2} \big|  {{\slashed V(s )}}  \cdot     P_k(\slashed{E}) (s,X(s)) \big| ds  \lesssim 2^{(1+10\epsilon )M_t} 2^{\alpha^{\star}M_t +10\epsilon M_t} \lesssim 2^{9M_t/5}. 
\ee
Hence, to control the LHS of (\ref{2022jan18eqn51}),  it would be sufficient to control the following term for     fixed   $k\in  [4M_t/5-20\epsilon M_t,3M_t + 5\epsilon M_t]\cap \Z$,
\[
 \big|\int_{t_1}^{t_2}   {  V_3(s )}  \big(P_k{E} (s,X(s))\big)_3 ds \big|. 
\]

 Based on the size of $\slashed \xi$, we decompose $ \big({E}_{k;j_1,j_2}(s,X(s))\big)_3$ further as follows,   
\begin{multline}\label{june10eqn43}
 \big( P_k{E} (s,X(s)) \big)_3 = \sum_{\tilde{k}\in [0, k+2]\cap \Z}   \tilde{E}_{k,\tilde{k}}(s,x) ,\quad  \tilde{E}_{k,\tilde{k}}(s,x) = \sum_{j_2\in[0, (1+\epsilon)M_t]\cap \Z, j_1\in[0, j_2+2]\cap \Z}  \tilde{E}_{k,\tilde{k}}^{j_1,j_2}(s,x),\\ 
  \tilde{E}_{k,\tilde{k}}^{j_1,j_2}(s,x)= \int_{\R^3} \int_{\R^3}  {K}_{k,\tilde{k}}(y) f(s,x-y, v) \varphi_{j_1}(\slashed v )  \varphi_{j_2}(v) d y d v, \quad {K}_{k,\tilde{k}}(y)= \int_{\R^3} e^{i x\cdot \xi} \varphi_{\tilde{k}}(\slashed \xi) \varphi_{k}(\xi) \frac{i\xi_3}{|\xi|^2}  d \xi. 
\end{multline}
 By doing integration by parts in $\xi$, we have
\be\label{june10eqn42}
|  {K}_{k,\tilde{k}}(y) |\lesssim 2^{2\tilde{k} } (1+2^{\tilde{k}}|\slashed y|)^{-N_0^3}(1+2^{  {k}}|  y_3|)^{-N_0^3}  .
 \ee

 Based on the possible size of $\tilde{k}$, we split into two cases as follows. 

$\bullet$\qquad If $\tilde{k}\leq k - 30\epsilon M_t.$

From the estimate of kernel in (\ref{june10eqn42}), the cylindrical symmetry of solution, and the estimate (\ref{jan12eqn36}) in Lemma \ref{spacetimeest}, we have 
\[
\int_{t_1}^{t_2}\|\tilde{E}_{k,\tilde{k}}^{j_1,j_2}(s,\cdot) \|_{L^\infty_x}  d s \lesssim  2^{-10M_t}+ \int_{t_1}^{t_2} \int_{\R^3} \int_{|y|\leq 2^{-\tilde{k}+\epsilon M_t}} \min\{2^{2\tilde{k}}, \frac{2^{\tilde{k}+\epsilon M_t/2}}{|\slashed x - \slashed y|}\} f(s,x-y, v) \varphi_{j_1}(\slashed v ) \varphi_{j_2}(v) d y d v d  s 
\]
\be\label{2022jan18eqn86}
\lesssim  2^{-10M_t} +2^{\tilde{k}-2j_1+j_2+6\epsilon M_t} .
\ee
Alternatively, if we use the volume of support of $v$, the following estimate holds from the estimate of kernel in (\ref{june10eqn42}), 
\be\label{june10eqn54} 
\int_{s_1}^{s_2}\|\tilde{E}_{k,\tilde{k}}^{j_1,j_2}(s,\cdot) \|_{L^\infty_x} ds \lesssim 2^{\epsilon M_t} \min\{ 2^{-k+2j_1+j_2}, 2^{2\tilde{k}-j_2}, 2^{2\tilde{k}-nj_2+(n-1)M_t}\}. 
\ee
After using  the estimates (\ref{2022jan18eqn86}) and (\ref{june10eqn54}) for the case $j_2\in[0,(1+\epsilon)M_t]\cap \Z$ and using the same argument used in obtaining the estimate (\ref{2022jan18eqn30}) for the case $[1+\epsilon)M_t, \infty)\cap \Z$,  we have 
\[
\sum_{j_2\in \Z_+, j_1\in[0, j_2+2]\cap \Z} \int_{s_1}^{s_2}\|\tilde{E}_{k,\tilde{k}}^{j_1,j_2}(s,\cdot) \|_{L^\infty_x} ds \lesssim \sum_{j_2\in [0, (1+\epsilon)M_t] \cap \Z_+, j_1\in[0, j_2+2]\cap \Z}    \min\{  2^{-10M_t} +2^{\tilde{k}-2j_1+j_2+6\epsilon M_t},  2^{\epsilon M_t} 2^{-k+2j_1+j_2} \}  
\]
\be\label{2022jan29eqn1}
+1 \lesssim 1+2^{3.5\epsilon M_t +(\tilde{k}-k)/2+(1+\epsilon)M_t}\lesssim 2^{(1-10\epsilon) M_t}. 
\ee
Therefore, recall  (\ref{june10eqn43}), from the above estimate, we have 
\be\label{2022jan20eqn72}
\int_{t_1}^{t_2} \big|  {  V_3(s )}      \tilde{E}_{k,\tilde{k}}^{}(s,X(s))  \big| ds\lesssim 2^{(2\gamma-5\epsilon)M_t}. 
\ee

$\bullet$\qquad If $\tilde{k}\in [ k - 30\epsilon M_t, k+2]\cap \Z$. 

\textbf{Only for this case, we exploit the hyperbolic nature of the sRVM system (\ref{sRVM}).} Recall (\ref{sRVM}) and (\ref{june10eqn43}). Note that, the following wave equation  holds for $ \tilde{E}_{k,\tilde{k}} (s,x),$
\[
\p_s^2   \tilde{E}_{k,\tilde{k}} - \Delta  \tilde{E}_{k,\tilde{k}}  =  - 4\pi \displaystyle{\int_{\R^3}\int_{\R^3} {K}_{k,\tilde{k}}(y) \p_s f(s, x-y, v) \hat{v}_3 d v dy } - 4\pi \int_{\R^3}\int_{\R^3}  {K}_{k,\tilde{k}}(y) \p_{x_3}f(s,x-y,v) d v dy
\]
\be\label{june10eqn66}
= \mathcal{N}_{k,\tilde{k}}^1(s, x) + \mathcal{N}_{k,\tilde{k}}^2(s, x),
\ee
where
\be\label{june10eqn67}
\mathcal{N}_{k,\tilde{k}}^1(s, x)= 4\pi\big(\int_{\R^3}\int_{\R^3} {K}_{k,\tilde{k}}(y)  \hat{v}_3  \hat{v}\cdot\nabla_x f(s,x-y,v)d vdy -  \int_{\R^3} \int_{\R^3}{K}_{k,\tilde{k}}(y) \p_{x_3}f(s,x-y,v)d v dy  \big) ,
\ee
\be\label{june10eqn31}
\mathcal{N}_{k,\tilde{k}}^2(s, x)= 4\pi \int_{\R^3} \int_{\R^3} {K}_{k,\tilde{k}}(y) E(s,x-y)\cdot \nabla_v \hat{v}_3 f(s,x-y,v) d v dy . 
\ee
Let 
\be\label{2022jan20eqn51}
\mathcal{E}_{k,\tilde{k}}(s):=(\p_s + i\d) \tilde{E}_{k,\tilde{k}}(s), \quad \tilde{E}_{k,\tilde{k}}(s)=  (2i\d )^{-1}(\mathcal{E}_{k,\tilde{k}}(s)- \overline{ \mathcal{E}_{k,\tilde{k}}(s)} ).  
\ee
From the equality (\ref{june10eqn66}),  we have 
\be\label{2022jan20eqn41}
(\p_s - i\d) \mathcal{E}_{k,\tilde{k}}(t)= \mathcal{N}_{k,\tilde{k}}^1(s) + \mathcal{N}_{k,\tilde{k}}^2(s),\quad \Longrightarrow 
 \mathcal{E}_{k,\tilde{k}}(s)= e^{i s\d}   \mathcal{E}_{k,\tilde{k}}(0) + \int_{0}^s e^{i(s-\tau)\d}   \big(\mathcal{N}_{k,\tilde{k}}^1(\tau) + \mathcal{N}_{k,\tilde{k}}^2(\tau) \big) d \tau.
\ee
After localizing the size of  $v$, we have 
\begin{multline}\label{june11eqn5}
\forall i \in \{1, 2\}, \quad \int_{0}^s e^{i(s-\tau)\d} \d^{-1}\mathcal{N}_{k,\tilde{k}}^i(\tau)  d\tau  = \sum_{j \in \Z_+ } T^{ j ;i}_{k, \tilde{k}}(s,x),\\
T^{j ;1}_{k, \tilde{k}}(s,x)= \int_0^s \int_{\R^3} \int_{\R^3}\int_{\R^3}  e^{i x\cdot \xi + i(s-\tau)|\xi |} \frac{ \nabla_v \hat{v}_3 }{|\xi|} \cdot \hat{E}(\tau, \eta) \varphi_{\tilde{k}}(\slashed \xi) \varphi_k(\xi)  \varphi_{j}(v)\hat{f}(\tau, \xi-\eta, v) d\eta  d \xi d v  d\tau,\\ 
T^{j ;2}_{k, \tilde{k}}(s,x)= \int_0^s \int_{\R^3} \int_{\R^3} e^{i x\cdot \xi + i(s-\tau)|\xi |} \frac{i(\hat{v}_3 \hat{v}\cdot \xi - \xi_3) }{|\xi|} \varphi_{\tilde{k}}(\slashed \xi) \varphi_k(\xi) \varphi_{j}(v)\hat{f}(\tau, \xi, v)  d \xi d v  d \tau.
\end{multline}
From the above equality,  (\ref{2022jan20eqn51}) and the fact that $V_3(s)\in \R$, we have
\be\label{2022jan20eqn65}
 \big|  \int_{t_1}^{t_2}{ V_3(s )}       \tilde{E}_{k,\tilde{k}}^{}(s,X(s))   ds \big| \lesssim 1 + \sum_{i=1,2}\sum_{j\in \Z} \big|\int_{t_1}^{t_2}{ V_3(s )}  T^{ j ;i}_{k, \tilde{k}}(s,X(s)) d s  \big|.
\ee

We first rule out the case $j \geq (1+\epsilon)M_t$. From the volume of support of $\xi$ and the rough estimate of the electric field (\ref{june2eqn71}) in Lemma \ref{roughgeneral},  we have
 \[
\sum_{j\geq (1+ \epsilon)M_t}   \| T^{j ; 1}_{k, \tilde{k}}(s,\cdot)\|_{L^\infty_x}+ \| T^{j ; 2}_{k, \tilde{k}}(s,\cdot)\|_{L^\infty_x} 
 \lesssim \sup_{\tau\in [0, t]} \sum_{j\geq (1+\epsilon)M_t}     \|  {f}(\tau, x, v) \varphi_{j}(v)\|_{L^1_xL^1_v} \big(2^{3k+\epsilon M_t} + 2^{2k-j+\epsilon M_t}  \| E(\tau, \cdot)\|_{L^\infty_x}\big)
\]
\be\label{june11eqn11}
\lesssim  \sum_{j \geq (1+\epsilon)M_t} 2^{9M_t +2\epsilon M_t} 2^{-nj+(n-1)M_t}\lesssim 1. 
\ee

Now, we focus on the case $j\in [0, (1+\epsilon)M_t]\cap \Z$.  Recall (\ref{june11eqn5}). Note that, 
\[
e^{i X(s)\cdot \xi + i(s-\tau)|\xi |}= \frac{1}{i |\xi| + i \hat{V}(s)\cdot \xi } \p_s e^{i X(s)\cdot \xi + i(s-\tau)|\xi |}.
\]
Hence, after doing integration by parts in $s$, we have
\be\label{june14eqn93}
\int_{t_1}^{t_2}   {  V_3(s )}  T^{j ;1}_{k, \tilde{k}} (s,X(s ))ds =  \mathfrak{H}^{j ;1}_{k, \tilde{k}}(t_1,t_2)+ \mathfrak{H}^{j ;2}_{k, \tilde{k}}(t_1,t_2)   + \mathfrak{High}^{j  }_{k, \tilde{k}}(t_1,t_2)),
\ee
 \be\label{june14eqn90}
\int_{t_1}^{t_2}    {  V_3(s )}  T^{j ;2}_{k, \tilde{k}} (s,X(s ))ds =  H^{j ;1}_{k, \tilde{k}}(t_1,t_2)+ H^{j ;2}_{k, \tilde{k}}(t_1,t_2)   + High^{j  }_{k, \tilde{k}}(t_1,t_2),
\ee
where
\be\label{june14eqn100}
\mathfrak{H}^{j ;1}_{k, \tilde{k}}(t_1,t_2)= \int_{t_1}^{t_2}     \int_{(\R^3)^3}  e^{i X(s)\cdot \xi    }  \frac{ i  {  V_3(s )}   \nabla_v \hat{v}_3 \cdot \hat{E}(s, \eta)  }{|\xi|( |\xi| +  \hat{V}(s )\cdot \xi)}  \hat{f}(s, \xi-\eta, v)  \varphi_{\tilde{k}}(\slashed \xi) \varphi_k(\xi) \varphi_{j}(v) d\eta  d \xi d v  d s,
\ee
\be\label{june14eqn101}
\mathfrak{H}^{j ;2}_{k, \tilde{k}}(t_1,t_2)= \sum_{a=1,2}  \int_0^{t_a}   \int_{(\R^3)^3}   e^{i X(t_a)\cdot \xi + i(t_a-\tau)|\xi |} \frac{ i (-1)^{a-1}  {  V_3(t_a )}  \nabla_v \hat{v}_3 }{|\xi|( |\xi| +  \hat{V}(t_a )\cdot \xi)} \cdot \hat{E}(\tau, \eta) \hat{f}(\tau, \xi-\eta, v)  \varphi_{\tilde{k}}(\slashed \xi)\varphi_k(\xi) \varphi_{j}(v)  d\eta  d \xi d v  d \tau,
\ee
\be\label{june14eqn102}
\mathfrak{High}^{j  }_{k, \tilde{k}}(t_1,t_2)=  \int_{t_1}^{t_2}   \int_0^s      \int_{(\R^3)^3}  e^{i X(s)\cdot \xi + i(s-\tau)|\xi |} \p_s\big[ \frac{ i  {  V_3(s )}   \nabla_v \hat{v}_3 }{|\xi|( |\xi| +  \hat{V}(s )\cdot \xi)}\big] \cdot \hat{E}(\tau, \eta)  \hat{f}(\tau, \xi-\eta, v)  \varphi_{\tilde{k}}(\slashed \xi)   \varphi_k(\xi) \varphi_{j}(v) d\eta  d \xi d v  d \tau d s . 
 \ee
\be\label{june10eqn71}
H^{j ;1}_{k, \tilde{k}}(t_1,t_2)= \int_{t_1}^{t_2}     \int_{\R^3} \int_{\R^3} e^{i  X(s) \cdot \xi } \frac{V_3(s) (\hat{v}_3 \hat{v}\cdot \xi - \xi_3) }{|\xi|( |\xi| +  \hat{V}(s )\cdot \xi)}     \varphi_{\tilde{k}}(\slashed \xi)   \varphi_k(\xi) \varphi_{j}(v) \hat{f}(s, \xi, v)  d \xi d v   d s, 
\ee
\be\label{june11eqn33} 
H^{j ;2}_{k, \tilde{k}}(t_1,t_2)= \sum_{a=1,2}  \int_0^{t_a} \int_{\R^3} \int_{\R^3} e^{i  X(t_a)\cdot \xi + i(t_a-\tau)|\xi |} \frac{ (\hat{v}_3 \hat{v}\cdot \xi - \xi_3) }{|\xi|}  \frac{(-1)^{a-1}  V_3(t_a )}{  |\xi| +  \hat{V}(t_a)\cdot \xi }\varphi_{\tilde{k}}(\slashed \xi)  \varphi_k(\xi) \varphi_{j}(v)\hat{f}(\tau, \xi, v)  d \xi d v  d \tau,
\ee
\be\label{june10eqn72}
High^{j  }_{k, \tilde{k}}(t_1,t_2)= \int_{t_1}^{t_2}      \int_0^s \int_{\R^3} \int_{\R^3} e^{i X(s) \cdot \xi + i(s-\tau)|\xi |}  \p_s \big[\frac{V_3(s) (\hat{v}_3 \hat{v}\cdot \xi - \xi_3) }{|\xi|( |\xi| +  \hat{V}(s )\cdot \xi)}\big]   \varphi_k(\xi) \varphi_{j}(v)  \varphi_{\tilde{k}}(\slashed \xi)  \hat{f}(\tau, \xi, v)  d \xi d v  d \tau d s. 
\ee 

  \noindent $\oplus$\qquad The estimate of $\mathfrak{H}^{j ;1}_{k, \tilde{k}}(s_1,s_2)$. 

Recall (\ref{june14eqn100}). In terms of kernel, we have
\begin{multline}\label{2022jan18eqn91}
\mathfrak{H}^{j ;1}_{k, \tilde{k}}(t_1,t_2) =\int_{t_1}^{t_2}    \int_{\R^3} \int_{\R^3}  {  V_3(s )} \mathfrak{K}^{j ;0}_{k, \tilde{k}}(y, v, V(s))\cdot  E(s, X(s)-y) f(s, X(s)-y, v)   dy d v d s,\\ 
\mathfrak{K}^{j ;0}_{k, \tilde{k}}(y, v, V(s)):= \int_{\R^3} e^{i y \cdot \xi}  \frac{ i     \nabla_v \hat{v}_3   }{|\xi|( |\xi| +  \hat{V}(s )\cdot \xi)}   \varphi_{\tilde{k}}(\slashed \xi) \varphi_k(\xi) \varphi_{j}(v) d \xi.
\end{multline}
Note that
\be\label{2022jan20eqn1}
\forall v\in supp(\varphi_{\tilde{j}}(\slashed v ) \varphi_{j}(v) ), \qquad   |\nabla_v \hat{v}_3|= |(\frac{-v_3 v_1}{\langle v \rangle^3}, \frac{-v_3 v_2}{\langle v \rangle^3},  \frac{v_1^2+v_2^2}{\langle v \rangle^3} )|\lesssim 2^{\tilde{j}-2j}. 
\ee
After doing integration by parts in $\xi$ many times, from the above estimate,  the following estimate holds for the kernel $\mathfrak{K}^{j ;0}_{k, \tilde{k}}(y, v, V(s))$, 
\[
| \mathfrak{K}^{j ;0}_{k, \tilde{k}}(y, v, V(s))|\varphi_{\tilde{j}}(\slashed v )  \lesssim 2^{k+ \tilde{j}-2j +90\epsilon M_t} (1+2^{k-30\epsilon M_t}|y|)^{-N_0^3}\varphi_{\tilde{j}}(\slashed v) \varphi_{  {j}}(  v ).  
\]

From the above estimate of kernel and the estimate (\ref{june14eqn21}) in Lemma \ref{roughgeneral}, we have
\be\label{june14eqn140}
|\mathfrak{H}^{j ;1}_{k, \tilde{k}}(t_1,t_2)|\lesssim \sum_{\tilde{j}\in [0, j+2]\cap \Z}  \int_{t_1}^{t_2} 2^{k+ \tilde{j}-2j +  \gamma M_t+100\epsilon M_t}\min\{2^{-3k+2 \tilde{j} +j}, 2^{-j} \} \| E(s,\cdot)\|_{L^\infty_x}  d s \lesssim 2^{-k/2+ 2M_t + 200\epsilon M_t}\lesssim 2^{7M_t/4 }. 
\ee

\noindent $\oplus$\qquad The estimate of $\mathfrak{H}^{j ;2}_{k, \tilde{k}}(t_1,t_2)$. 

Recall (\ref{june14eqn101}). From the Kirchhoff's formulas in Lemma \ref{Kirchhoff}, in terms of kernel, we have
\[
\mathfrak{H}^{j;2}_{k, \tilde{k}}(t_1,t_2)= \sum_{\tilde{j}\in[0, j+2]\cap \Z} \mathfrak{H}^{\tilde{j}, j;2}_{k, \tilde{k}}(t_1,t_2) , \quad \mathfrak{H}^{\tilde{j}, j;2}_{k, \tilde{k}}(t_1,t_2):= \sum_{a=1,2}  \int_0^{t_a}  \int_{\R^3} \int_{\R^3} \int_{\mathbb{S}^2}  (-1)^{a-1}    V_3(t_a ) \big((t_a-\tau)\mathfrak{K}^{j ;1}_{k, \tilde{k}}(y, v, V(t_a),\omega)
\]
\be\label{june14eqn121}
 + \mathfrak{K}^{j ;2}_{k, \tilde{k}}(y, v, V(t_a), \omega)  \big)E(\tau, X(t_a)-y+(t_a-\tau)\omega) f(\tau, X(t_a)-y+(t_a-\tau)\omega, v)\varphi_{\tilde{j}}(\slashed v)  d\omega dy d v d\tau, 
\ee
where the kernels $\mathfrak{K}^{j_1,j_2;i}_{k, \tilde{k}}(y, v, V(s)), i\in \{1,2\}$,  are defined as follows, 
\be\label{june14eqn110}
\mathfrak{K}^{j ;1}_{k, \tilde{k}}(y, v, V(s), \omega)=  \int_{\R^3} e^{i y \cdot \xi}  \frac{ (\xi\cdot \omega + |\xi|)    \nabla_v \hat{v}_3   }{|\xi|( |\xi| +  \hat{V}(s )\cdot \xi)}   \varphi_{\tilde{k}}(\slashed \xi) \varphi_k(\xi)   \varphi_{j}(v) d \xi,
\ee
\be\label{june14eqn111}
\mathfrak{K}^{j ;2}_{k, \tilde{k}}(y, v, V(s), \omega)=  \int_{\R^3} e^{i y \cdot \xi}  \frac{     \nabla_v \hat{v}_3   }{|\xi|( |\xi| +  \hat{V}(s )\cdot \xi)}   \varphi_{\tilde{k}}(\slashed \xi) \varphi_k(\xi)  \varphi_{j}(v) d \xi. 
\ee
By doing integration by parts in $\xi$ many times, from the estimate (\ref{2022jan20eqn1}),  we have
\[
\big(| \mathfrak{K}^{j ;1}_{k, \tilde{k}}(y, v, V(s), \omega)|+ 2^k | \mathfrak{K}^{j ;2}_{k, \tilde{k}}(y, v, V(s), \omega)|\big)\varphi_{\tilde{j}}(\slashed v)  \varphi_{j}(v)  \lesssim 2^{2k+\tilde{j}-2j +90\epsilon M_t} (1+2^{k-30\epsilon M_t}|y|)^{-N_0^3} \varphi_{\tilde{j}}(\slashed v)  \varphi_{j}(v). 
\]

From the above estimates of kernels, the following estimate holds after using the volume of support of $v$ and the estimate  (\ref{june14eqn21}) in Lemma \ref{roughgeneral},  
\[
\big|\mathfrak{H}^{\tilde{j}, j;2}_{k, \tilde{k}}(t_1,t_2)\big|\lesssim  \sum_{a=1,2}  \int_0^{t_a}  \int_{\R^3} \int_{\R^3} \int_{0}^{2\pi} \int_0^{\pi}  2^{k+ \tilde{j}-2j +\gamma M_t+90\epsilon M_t}  ((t_a-\tau) 2^k + 1) (1+2^{k-30\epsilon M_t}|y|)^{-N_0^3}\| E(\tau, \cdot)\|_{L^\infty_x}
\]
\be\label{june14eqn112}
\times f(\tau, X(t_a)-y+(t_a-\tau)(\sin\theta\cos\phi, \sin\theta\sin \phi, \cos\theta) , v)  \varphi_{\tilde{j}}(\slashed v)  \varphi_{j}(v) \sin \theta d\theta d\phi  dy d v d\tau
\ee
\be\label{june14eqn113}
\lesssim \sum_{a=1,2}  2^{-k+2\tilde{j}+j}  2^{\tilde{j}-2j  +\gamma M_t+40\epsilon M_t} \int_0^{s_a}  \| E(\tau, \cdot)\|_{L^\infty_x}  d\tau \lesssim 2^{-k+ 3\tilde{j}-j   +2\gamma M_t +100\epsilon M_t}. 
\ee

Alternatively, if we use the cylindrical symmetry of the distribution function, the rough estimate (\ref{june2eqn71}) in Lemma \ref{roughgeneral} for the electric field,     and the estimate (\ref{jan12eqn36}) in Lemma \ref{spacetimeest} and change coordinates $\theta\rightarrow X_3(t_a)-y_3+(t_a-\tau)\cos\theta$,  the following estimate holds, 
\[
 \big|\mathfrak{H}^{\tilde{j}, j;2}_{k, \tilde{k}}(t_1,t_2)\big| \lesssim  \sum_{a=1,2}  \int_0^{t_a}  \int_{\R^3} \int_{\R^3} \int_{0}^{2\pi} \int_0^{\pi}  2^{\tilde{j}-2j +\gamma M_t+40\epsilon M_t}  ((t_a-\tau) 2^k + 1)  \varphi_{\tilde{j}}(\slashed v)  \varphi_{j}(v)  (1+2^{k-20\epsilon M_t}|y_3|)^{-N_0^3}
\] 
\[
\times \frac{ 2^{4  {\alpha}_{\tau} M_\tau /3 + M_t/3+5\epsilon M_t}}{|(z_1,z_2)|^{1-2\epsilon}} f(\tau, z_1,z_2,  X_3(t_a)-y_3+(t_a-\tau)\cos\theta) \sin \theta d \theta  d\phi d z_1 dz_2 dy d v d \tau  + 2^{-10M_t}
\]
\be\label{june14eqn114}
\lesssim 2^{4\alpha^{\star} M_t/3+ M_t/3+\gamma M_t+100\epsilon M_t}2^{\tilde{j}-2j}2^{-2\tilde{j}+j}+ 2^{-10M_t} .
\ee
After combining the estimates (\ref{june14eqn113}) and (\ref{june14eqn114}), we have
\[
 \big|\mathfrak{H}^{  j;2}_{k, \tilde{k}}(t_1,t_2)\big| \lesssim  \sum_{\tilde{j}\in[0, j+2]\cap \Z}   2^{\gamma M_t+ 100\epsilon M_t}\min\{ 2^{-k+2\tilde{j} + \gamma M_t }, 2^{4\alpha^{\star} M_t/3+  M_t/3}2^{-\tilde{j}-j} +2^{-10M_t}\}
\]
\be\label{june14eqn133}
\lesssim 2^{-k/2+5 \gamma M_t/3+2\alpha^{\star} M_t/3+200\epsilon M_t} \lesssim 2^{1.9\gamma M_t}. 
\ee

 \noindent $\oplus$\qquad The estimate of $ \mathfrak{High}^{ j  }_{k, \tilde{k}}(s_1,s_2)$. 

Recall (\ref{june14eqn102}). As a result of direct computations, we have
\be\label{2022jan29eqn5}
 \p_s\big[ \frac{ i  {  V_3(s )}   }{|\xi|( |\xi| +  \hat{V}(s )\cdot \xi)}\big] = \langle V(s)\rangle^{-1} E(s, X(s)) \cdot \mathfrak{M}(\xi, v, V(s)),
\ee
\[
\mathfrak{M}(\xi, v, V(s))= \frac{  \mathbf{e}_3 \langle V(s)\rangle   }{|\xi|( |\xi| +  \hat{V}(s )\cdot \xi)} - \frac{V_3(s)    }{|\xi|( |\xi| +  \hat{V}(s )\cdot \xi)^2} \big(\xi - \hat{V}(s) \hat{V}(s)\cdot \xi \big).
\]
Hence 
\[
\mathfrak{High}^{j   }_{k, \tilde{k}}(t_1,t_2)= \int_{t_1}^{t_2} \langle V(s)\rangle^{-1} E(s, X(s))\cdot \widetilde{\mathfrak{High}}^{j   }_{k, \tilde{k}}(s) d s, 
\]
where
\be\label{june14eqn132}
\widetilde{\mathfrak{High}}^{j  }_{k, \tilde{k}}(s) :=  \int_0^s    \int_{\R^3} \int_{\R^3}\int_{\R^3}  e^{i X(s)\cdot \xi + i(s-\tau)|\xi |} \nabla_v \hat{v}_3 \mathfrak{M}(\xi, v, V(s))\cdot \hat{E}(\tau, \eta) \hat{f}(\tau, \xi-\eta, v)  \varphi_{\tilde{k}}(\slashed \xi) \varphi_k(\xi)  \varphi_{j}(v) d\eta  d \xi d v   d\tau . 
\ee
 Note that, recall (\ref{june14eqn101}), the difference between $\widetilde{\mathfrak{High}}^{j  }_{k, \tilde{k}}(s) $ and $ \mathfrak{H}^{j ;2}_{k, \tilde{k}}(t_1,t_2)$ lies only in the symbol and the characteristic time, which don't play much role in the proof of  the obtained estimate (\ref{june14eqn133}). With minor modifications, we have
\[
 \forall s\in[t_1,t_2], \quad  | \widetilde{\mathfrak{High}}^{j   }_{k, \tilde{k}}(s)|\lesssim  2^{1.9\gamma M_t}. 
\]
From the above estimate and the estimate (\ref{june14eqn21}) in Lemma \ref{roughgeneral}, we have
\be\label{2022jan25eqn21}
\big|\mathfrak{High}^{j_1,j_2  }_{k, \tilde{k}}(t_1,t_2)\big|\lesssim  \int_{t_1}^{t_2}2^{0.9 \gamma M_t  +5\epsilon M_t} \| E(s, \cdot)\|_{L^\infty_x} d s\lesssim  2^{1.9 \gamma M_t +40\epsilon M_t}. 
\ee
 
\noindent $\oplus$\qquad The estimate of $H^{j ;1}_{k, \tilde{k}}(t_1,t_2).$

Recall (\ref{june10eqn71}). In terms of kernel, we have 
\[
H^{j ;1}_{k, \tilde{k}}(t_1,t_2)= \int_{t_1}^{t_2}  
\int_{\R^3} \int_{\R^3}   \mathcal{K}_{\tilde{k}, k}(y, v, V(s)) f(s, X(s)-y, v)  \varphi_{j}(v) d y d v d s,  
\]
where
\[
\mathcal{K}_{\tilde{k}, k}(y, v,  V(s)) := \int_{\R^3} e^{i y\cdot \xi }\frac{V_3(s) (\hat{v}_3 \hat{v}\cdot \xi - \xi_3) }{|\xi|( |\xi| +  \hat{V}(s )\cdot \xi)}     \varphi_{\tilde{k}}(\slashed \xi) \varphi_k(\xi) d \xi. 
\]
Due to the facts that $|\slashed V(s)|\leq  2^{\alpha^{\star}M_t}, |V(s)|\gtrsim  2^{(1-3\epsilon) M_t}$, and $|\slashed \xi|/|\xi|\gtrsim 2^{-30\epsilon M_t}$,  for any $(v,\xi)\in supp(    \varphi_{\tilde{k}}(\slashed \xi) \varphi_k(\xi) \varphi_{\tilde{j}}(\slashed v ) \varphi_{j}(v))$,   we have
\be\label{june11eqn31}
 | \tilde{\xi}\times \tilde{V}(s)|\gtrsim  2^{-30\epsilon M_t}, \quad |\hat{v}_3 \hat{v}\cdot \xi - \xi_3|\leq |\xi_3|(1-\hat{v}_3^2) + |\hat{v}_1\xi_1| + |\hat{v}_2\xi_2| \lesssim 2^{\tilde{j}-j+k }. 
\ee
Therefore, after doing integration by parts in $\xi$ along $V(s)$ and directions perpendicular to $V(s)$, the following estimate holds for the kernel, 
\be\label{june10eqn81}
\big| \mathcal{K}_{\tilde{k}, k}(y, v,  V(s))\big| \varphi_{\tilde{j}}(\slashed v ) \varphi_{j}(v)   \lesssim 2^{2{k}+200\epsilon M_t+\tilde{j}-j} (1+2^{{k}-30\epsilon M_t} |y|)^{-N_0^3}  \varphi_{\tilde{j}}(\slashed v ) \varphi_{j}(v).
\ee 
From the above estimate of the kernel, the volume of support of  $v$ and the estimate (\ref{outsidemajority}) if $|\slashed v|\geq 2^{(\alpha^{\star}+\epsilon)M_t}$,  after rerunning the argument used in obtaining the estimate (\ref{june10eqn54}),  we have 
 \be\label{june11eqn32}
\big| H^{j ;1}_{k, \tilde{k}}(t_1,t_2)\big| \lesssim \sum_{\tilde{j}\in [0, j+2]\cap \Z}   2^{\gamma M_t + 300\epsilon M_t+ \tilde{j}-j} \min\{2^{-\tilde{k}} 2^{2(\alpha^{\star} +\epsilon)M_t + j},  2^{\tilde{k}-2\tilde{j}+j} \} \lesssim 2^{1.8\gamma M_t}. 
\ee

For better presentation, we defer the estimate of remaining terms  $H^{j ;2}_{k, \tilde{k}}(t_1,t_2)$ and $ High^{j  }_{k, \tilde{k}}(t_1,t_2)$ to Lemma \ref{keylemma2} and Lemma \ref{keylemma3} respectively.

To sum up, recall (\ref{2022jan20eqn65}),  (\ref{june14eqn93}) and (\ref{june14eqn90}), our desired estimate (\ref{2022jan18eqn51}) holds  from the obtained estimates (\ref{2022jan20eqn71}), (\ref{2022jan20eqn72}), (\ref{2022jan20eqn65}),    (\ref{june11eqn11}),  (\ref{june14eqn140}), (\ref{june14eqn133}), (\ref{2022jan20eqn31}), (\ref{june11eqn32}), the estimate in Lemma \ref{june14eqn65}, and the estimate   (\ref{june14eqn72}) in Lemma \ref{keylemma3}.

 \qed 

 \vo 
 
 Now, we give the deferred estimate of $H^{j ;2}_{k, \tilde{k}}(t_1,t_2)$ in the following Lemma.

  \begin{lemma}\label{keylemma2}
 Let $j\in [0, (1+\epsilon) M_t]\cap \Z$,  $k\in  [4M_t/5-20\epsilon M_t,3M_t+5\epsilon M_t]\cap \Z$ and $\tilde{k}\in[0, k+2]\cap \Z$, s.t., $\tilde{k}\geq k-30\epsilon M_t$. Under the assumption of Lemma \textup{\ref{bootstraplemma2}}, we have
 \be\label{june14eqn65}
 \big| H^{j ;2}_{k, \tilde{k}}(t_1,t_2)\big|  \lesssim 2^{1.9 \gamma M_t + 10\epsilon M_t}. 
\ee
 \end{lemma}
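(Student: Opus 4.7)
\bigskip

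\noindent\textbf{Proof proposal for Lemma \ref{keylemma2}.} The plan is to follow exactly the pattern used for the analogous estimate on $\mathfrak{H}^{j;2}_{k,\tilde{k}}(t_1,t_2)$ in the derivation of (\ref{june14eqn133}), modifying the kernel estimates to account for the different symbol $(\hat{v}_3\hat{v}\cdot\xi-\xi_3)/|\xi|$ and the extra factor $V_3(t_a)$ appearing in $H^{j;2}_{k,\tilde{k}}(t_1,t_2)$. First, I would apply Kirchhoff's formula (Lemma \ref{Kirchhoff}) to the factor $e^{i(t_a-\tau)|\xi|}$ to convert the wave propagator into a spherical integration. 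After additionally decomposing in $\tilde{j}\in[0,j+2]\cap\Z$ the size of $\slashed v$, this rewrites each summand of $H^{j;2}_{k,\tilde{k}}$ in kernel form, analogous to (\ref{june14eqn121}):
\[
H^{\tilde{j},j;2}_{k,\tilde{k},a}=(-1)^{a-1}V_3(t_a)\int_0^{t_a}\!\!\int_{\R^3}\!\int_{\R^3}\!\int_{\mathbb{S}^2}\bigl[(t_a-\tau)\mathcal{K}^1+\mathcal{K}^2\bigr]f(\tau,X(t_a)-y+(t_a-\tau)\omega,v)\varphi_{\tilde{j}}(\slashed v)\,d\omega\,dy\,dv\,d\tau,
\]
with $\mathcal{K}^1,\mathcal{K}^2$ the natural analogues of (\ref{june14eqn110})--(\ref{june14eqn111}) carrying the symbol $(\hat v_3\hat v\cdot\xi-\xi_3)/|\xi|$ instead of $\nabla_v\hat{v}_3$.

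Next, I would estimate the kernels by integration by parts in $\xi$ along $V(t_a)$ and perpendicular directions. The key inputs are the symbol bound $|\hat{v}_3\hat{v}\cdot\xi-\xi_3|\lesssim 2^{\tilde{j}-j+k}$ from (\ref{june11eqn31}) and the angular lower bound $|\xi|+\hat V(t_a)\cdot\xi\gtrsim 2^{k-60\epsilon M_t}$. This latter bound comes from combining $|\slashed\xi|/|\xi|\gtrsim 2^{-30\epsilon M_t}$ (since $\tilde{k}\geq k-30\epsilon M_t$) with the fact that the hypothesis $\alpha_sM_s\leq\alpha^\star M_t$ and $|V|\sim 2^{\gamma M_t}$ force $\hat V$ to be close to $\pm\mathbf{e}_3$, so $\tilde V\cdot\tilde\xi$ stays away from $-1$ by at least $\sim 2^{-60\epsilon M_t}$, dominating the defect $1-|\hat V|\sim 2^{-2\gamma M_t}$. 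The resulting kernel estimate, analogous to (\ref{june10eqn81}), reads
\[
\bigl(|\mathcal{K}^1|+2^k|\mathcal{K}^2|\bigr)\varphi_{\tilde j}(\slashed v)\varphi_j(v)\lesssim 2^{2k+\tilde{j}-j+200\epsilon M_t}(1+2^{k-30\epsilon M_t}|y|)^{-N_0^3}\varphi_{\tilde j}(\slashed v)\varphi_j(v).
\]

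Then I would run two complementary estimates in parallel, exactly as was done for (\ref{june14eqn113}) and (\ref{june14eqn114}). Estimate (a) uses the volume of support of $v$ and the $L^1_tL^\infty_x$-bound for $E$ in (\ref{june14eqn21}); estimate (b) invokes the cylindrical symmetry of $f$ together with the weighted space-time estimate (\ref{jan12eqn36}) after the change of variables $\theta\mapsto X_3(t_a)-y_3+(t_a-\tau)\cos\theta$. Taking a geometric mean and optimizing over $\tilde j\in[0,j+2]\cap\Z$ (as in the transition from (\ref{june14eqn113})--(\ref{june14eqn114}) to (\ref{june14eqn133})), together with the hypothesis $k\geq 4M_t/5-20\epsilon M_t$ and the values $\alpha^\star=7/10+\iota,\iota=10^{-2}$, yields the claimed bound $\lesssim 2^{1.9\gamma M_t+10\epsilon M_t}$.

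The main obstacle is the robust verification of the angular lower bound $|\xi|+\hat V(t_a)\cdot\xi\gtrsim 2^{k-60\epsilon M_t}$ in the regime where $1-|\hat V|$ is exponentially small (of order $2^{-2\gamma M_t}$); this is what allows the integration by parts to succeed and is the precise mechanism through which the transversality between large $V$ and the relevant frequencies $\xi$ (guaranteed by the restriction to the high-$\slashed\xi$ case) is exploited. A secondary technical point is the careful tracking of the cumulative $2^{60\epsilon M_t}$ losses from each differentiation of $(|\xi|+\hat V\cdot\xi)^{-1}$, to ensure the final exponent remains below $1.9\gamma M_t+10\epsilon M_t$.
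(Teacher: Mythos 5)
There is a genuine gap, and it is structural rather than technical. You have transplanted the strategy used for $\mathfrak{H}^{j;2}_{k,\tilde{k}}$ (estimates (\ref{june14eqn113})--(\ref{june14eqn114})) onto $H^{j;2}_{k,\tilde{k}}$, but these two terms have different structure: $\mathfrak{H}^{j;2}$ is the boundary term coming from $T^{j;1}$ and is \emph{bilinear}, containing a factor $\hat{E}(\tau,\eta)$, whereas $H^{j;2}$, defined in (\ref{june11eqn33}), is \emph{linear} in $f$ and contains no electric field at all. Both of your proposed complementary estimates rely on that missing factor: estimate (a) invokes the $L^1_\tau L^\infty_x$ bound (\ref{june14eqn21}) for $E$ to make the $\tau$-integral over $[0,t_a]$ harmless, and estimate (b) invokes the pointwise bound (\ref{june2eqn71}) for $E(\tau,\cdot)$ inside the space-time estimate. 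With no $E$ present, the $\tau$-integration costs you the full measure of $[0,t_a]$ against a quantity that is merely $O(1)$ per unit time, and a short computation shows the resulting bounds are of order $2^{\gamma M_t+k+3\tilde{j}}$ and $2^{\gamma M_t+k-\tilde{j}}$ respectively (up to $\epsilon M_t$ losses); since $k$ can be as large as $3M_t$, no interpolation over $\tilde{j}$ recovers $2^{1.9\gamma M_t}$. Your identified ``main obstacle,'' the lower bound $|\xi|+\hat V(t_a)\cdot\xi\gtrsim 2^{k-60\epsilon M_t}$, is real but is the easy part --- it is exactly the mechanism of (\ref{june11eqn31})--(\ref{june10eqn81}) already used for $H^{j;1}$ --- and resolving it does not close the gap above.

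The paper's proof supplies the two missing ingredients. First, it further localizes the time delay $t_a-\tau\sim 2^m$ and, crucially, the angle between $\xi$ and $v$ to size $2^l$ with threshold $\bar l=-j+5M_t/18$; for the small-angle case $l=\bar l$ the improved symbol bound $|\hat v_3\hat v\cdot\xi-\xi_3|\lesssim 2^{k+l}$ together with the small volume of support of $\omega$ and $v$ (the essential/error splitting (\ref{june13eqn1})) already suffices. Second, and this is the heart of the matter, for $l>\bar l$ it integrates by parts in $\tau$ using the \emph{full} phase $e^{i(t_a-\tau)|\xi|-i\tau\hat v\cdot\xi}$ of the profile $g(\tau,x,v)=f(\tau,x+\tau\hat v,v)$, producing the non-degenerate denominator $|\xi|+\hat v\cdot\xi\gtrsim 2^{k+2l}$ and converting $\partial_\tau\hat g$ back into an expression containing $E\cdot\nabla_v f$ via the Vlasov equation. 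It is only after this normal-form step that the electric field reappears and your two complementary estimates (volume of support versus cylindrical symmetry plus (\ref{jan12eqn36})) become applicable, as in (\ref{june14eqn51})--(\ref{june14eqn52}). This integration by parts in $\tau$ is precisely the exploitation of the hyperbolic structure that the lemma is designed to capture, and it is absent from your proposal.
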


 \begin{proof}
Recall (\ref{june11eqn33}). Note that $t \leq 2^{\epsilon M_t-10}$. We localize further the size of   $t_a-\tau$ and the  angle between $\xi/|\xi|$ and $v/|v|$ as follows, 
\be\label{june14eqn63}
H^{j ;2}_{k, \tilde{k}}(t_1,t_2)=  \sum_{m\in [-10M_t, \epsilon M_t]\cap \Z }\sum_{l\in [\bar{l}, 2]\cap \Z} H^{j ;2}_{k, \tilde{k};m, l}(t_1,t_2),
\ee
where $\bar{l}:= -j + 5M_t/18$ and $H^{j ;2}_{k, \tilde{k};l}(s_1,s_2)$ is defined as follows, 
\[
H^{j ;2}_{k, \tilde{k};m,l}(t_1,t_2)=  \sum_{a=1,2} (-1)^{a-1}  V_3(t_a ) \int_0^{t_a} \int_{\R^3} \int_{\R^3} e^{i  X(t_a)\cdot \xi + i(t_a-\tau)|\xi |} \frac{ (\hat{v}_3 \hat{v}\cdot \xi - \xi_3) }{|\xi|(|\xi| +  \hat{V}(t_a)\cdot \xi)}  \varphi_{l;\bar{l}}(\frac{\xi \times v }{|\xi||v|})\varphi_{m;-10M_t}(t_a-\tau)  \] 
\be\label{june13eqn21}
\times \varphi_{\tilde{k}}(\slashed \xi) \varphi_k(\xi) \varphi_{j}(v)\hat{f}(\tau, \xi, v)  d \xi d v  d \tau.
\ee

From the Kirchhoff's formula in Lemma \ref{Kirchhoff}, after representing  $H^{j_1,j_2;2}_{k, \tilde{k};m,l}(t_1,t_2)$  in terms of kernel, the following estimate holds from the volume of support of $v$ and $\tau$, 
\[
\big|H^{j ;2}_{k, \tilde{k};m,l}(t_1,t_2)\big|\lesssim 2^{m+200\epsilon M_t}2^{3j}.
\]
From the above estimate, we can rule out the case $m=-10M_t$ and focus on the case $m\in (-10M_t, \epsilon M_t]\cap \Z$, in which we have $t_a-\tau \sim 2^m.$

Based on the possible size of $l$, we separate into two cases as follows.  

\noindent $\oplus$\qquad If $l=\bar{l}=-j + 5M_t/18$.

Recall the equality  (\ref{march4eqn41})  in the proof of Kirchhoff's formula.  From the stationary point of view, we know that the angle between $\theta$ and $\xi$ are localized around the size of $ 2^{-(m+k)/2}$.  Therefore, after localizing the angle between $v$ and $\theta$,  the following equality holds from the Kirchhoff's formulas in Lemma \ref{Kirchhoff}, 
\begin{multline}\label{june14eqn59}
H^{j ;2}_{k, \tilde{k};m,l}(t_1,t_2)= \sum_{\ast\in \{\textup{ess}, \textup{err}\}} \mathcal{H}^{\ast; j ;1 }_{k, \tilde{k};m,l}(t_1,t_2) + \mathcal{H}^{\ast; j ;2 }_{k, \tilde{k};m,l}(t_1,t_2),\quad  \mathcal{H}^{\ast; j ;i }_{k, \tilde{k};l}(t_1,t_2)=  \sum_{a=1,2}  \int_0^{t_a} \int_{\R^3} \int_{\R^3} \int_{\mathbb{S}^2}  (-1)^{a-1} (t_a-\tau) \\ 
  \times V_3(t_a )\varphi_{m;-10M_t}(t_a-\tau) \mathcal{K}^{\ast;i}_{k, \tilde{k};l}(y, v, V(t_a), \omega)   f(\tau, X(t_a)-y+(t_a-\tau)\omega)  \varphi_{j}(v)  d \omega d y d v d \tau, \quad i\in\{1,2\},
\end{multline}
where the kernels $\mathcal{K}^i_{k, \tilde{k}}(y, v, V(t_a), \omega), i\in\{1,2\}, $ are defined as follows, 
\[
\mathcal{K}^{\ast;1}_{k, \tilde{k};l}(y, v, V(t_a),\omega):=  \int_{\R^3} e^{iy \cdot \xi }\frac{i(\xi\cdot \omega + |\xi|) (\hat{v}_3 \hat{v}\cdot \xi - \xi_3) }{4\pi |\xi| (  |\xi| +  \hat{V}(t_a)\cdot \xi )}    \varphi_{\tilde{k}}(\slashed \xi) \varphi_k(\xi) \varphi_{l;\bar{l}}(\frac{\xi \times v }{|\xi||v|}) \phi_{\ast}(  \omega, v ) d \xi,
\]
\[
\mathcal{K}^{\ast;2}_{k, \tilde{k};l}(y, v, V(t_a),\omega):=  \int_{\R^3} e^{iy \cdot \xi }\frac{    (\hat{v}_3 \hat{v}\cdot \xi - \xi_3) }{4\pi|\xi| (  |\xi| +  \hat{V}(t_a)\cdot \xi )}    \varphi_{\tilde{k}}(\slashed \xi) \varphi_k(\xi)  \varphi_{l;\bar{l}}(\frac{\xi \times v }{|\xi||v|}) \phi_{\ast}(  \omega, v ) d \xi,
\]
and   the cutoff functions $\phi_{\ast}(  \omega,   v), \ast\in \{\textup{ess}, \textup{err}\}$,  are defined as follows, 
\be\label{june13eqn1}
\phi_{\textup{ess}}( \omega,  v ):=\psi_{\leq \max\{l, -(m+k)_{+}/2\}+\epsilon M_t}(  \omega \times \tilde{v}   ), \quad \phi_{\textup{err}}(  \omega,  v ) =1-\phi_{\textup{ess}}(  \omega,  v ). 
\ee

We first rule out the error type terms. Due to the fact that $\angle(\xi, \pm \omega )\geq 2^{-(k+m)/2+\epsilon M_t/2}  $, we know that we gain at least $2^{-\epsilon M_t/2}$ each time we do integration by parts in $\omega$ once. Hence, after doing integration by parts in $\omega$ many times  and then do integration by parts in $\xi$, the following estimate holds for the  error type  kernels $\mathcal{K}^{err;i}_{k, \tilde{k};l}(\cdot,\cdot, \cdot),i\in\{1,2\}$,
\be\label{june13eqn11}
|\mathcal{K}^{err;1}_{k, \tilde{k};l}(y, v, V(t_a),\omega)| +2^k |\mathcal{K}^{err;2}_{k, \tilde{k};l}(y, v, V(t_a),\omega)|  \lesssim 2^{3k+2l-100M_t}  (1+2^{k+l-30\epsilon M_t}|y |)^{-N_0^3}. 
\ee
From the above  estimate of kernels, the following estimate holds from  the volume of support of $v$, 
\be\label{2022jan19eqn21}
 \big| H^{err;j ;1}_{k, \tilde{k};m,l}(t_1,t_2)\big|+  \big| H^{err;j ;2}_{k, \tilde{k};m,l}(t_1,t_2)\big|\lesssim 2^{-50 M_t+2j_1+j_2}\lesssim 1. 
\ee

Now, we focus on the essential type terms. Note that, for any $(\xi, v)\in supp(\varphi_k(\xi) \varphi_{l;\bar{l}}( \tilde{v}\times \tilde{\xi} )$, we have 
\[
\big|\hat{v}_3 \hat{v}\cdot \xi - \xi_3\big|\lesssim \min_{\mu\in\{+,-\}}|\xi|\big(|\hat{v}_3-\mu \xi_3/|\xi|| + |\hat{v}\cdot\xi/|\xi|-\mu|\big)\lesssim 2^{k+l}. 
\]
 After doing integration by parts in $\xi$ in $v$ direction and directions perpendicular to $v$,  from the above estimate and  the estimate  (\ref{june11eqn31}),  we have 
\[
\big(|\mathcal{K}^{ess;1}_{k, \tilde{k}}(y, v, V(t_a),\omega)| +2^k |\mathcal{K}^{ess;2 }_{k, \tilde{k}}(y, v, V(t_a),\omega)|\big) \varphi_{\tilde{j}}(\slashed v)\varphi_j(v) \lesssim 2^{3k+2l+120\epsilon M_t} \min\{2^{\tilde{j}-j}, 2^l\}\phi_{\textup{ess}}( \omega,  v )
\]
\be\label{june13eqn12}  
\times (1+2^{k-30\epsilon M_t}|y\cdot v|)^{-N_0^3}(1+2^{k+l-30\epsilon M_t}|y\times v|)^{-N_0^3} \varphi_{\tilde{j}}(\slashed v)\varphi_j(v).   
\ee 
From the above estimate of kernels and  the volume of support of $\omega$ and $v$, we have  
\[
\big| H^{ess;j ;1}_{k, \tilde{k};l}(t_1,t_2)\big|+ \big| H^{ess;j ;2}_{k, \tilde{k};l}(t_1,t_2)\big|\lesssim     2^{\gamma M_t + l+150\epsilon M_t} 2^{ m}  2^{2 \max\{l, -(m+k)_{+}/2\}} \min\{ 2^{3j}, 2^{3k+2l-j}\} \lesssim 2^{\gamma M_t + 160\epsilon M_t}\big( 2^{3l }  + 2^{-k+l }\big) 
\]
\be\label{june14eqn60}
 \times \min\{ 2^{3j}, 2^{3k+2l-j }\}\lesssim   2^{\gamma M_t +160\epsilon M_t}\big( 2^{3l+ 3j } + 2^{l+ 2j}2^{(2l-j)/3}\big) \lesssim  2^{M_t +160\epsilon M_t}( 2^{3l+3j}+ 2^{5l/3+5j/3})\lesssim 2^{1.9\gamma M_t}. 
\ee
 Recall the decomposition (\ref{june14eqn59}). After combining the above estimate with  the estimate  (\ref{2022jan19eqn21}), we have
\be\label{june14eqn70}
\big| H^{j;2}_{k, \tilde{k};m,l}(t_1,t_2)\big|\lesssim 2^{1.9\gamma M_t}. 
\ee

\noindent $\oplus$\qquad If $l > \bar{l}= -j + 5M_t/18$.

Recall (\ref{june13eqn21}). For this case, to exploit the oscillation in $\tau$, we do integration by parts in $\tau$ once. As a result, we have
\be\label{june14eqn56}
\big|H^{j ;2}_{k, \tilde{k};m,l}(t_1,t_2)\big|\leq \sum_{a=1,2}\big| \widetilde{End}^{j ;a}_{k, \tilde{k};m,l}(t_1,t_2) \big|  + \big|\tilde{\mathcal{H}}^{j ;a}_{k, \tilde{k};m,l}(t_1,t_2)\big|,
\ee
where
\[
 \widetilde{End}^{j ;a}_{k, \tilde{k};m,l}(t_1,t_2)   =     
\big| \int_0^{t_a}  \int_{\R^3} \int_{\R^3} e^{i  X(t_a)\cdot \xi + i(t_a-\tau)|\xi | }  \frac{  V_3(t_a ) (\hat{v}_3 \hat{v}\cdot \xi - \xi_3) \varphi_{l;\bar{l}}(\tilde{v}\times \tilde{\xi})   \varphi_{\tilde{k}}(\slashed \xi) \varphi_{k }(\xi) \varphi_{j }(v)  }{|\xi|(|\xi| +  \hat{V}(t_a)\cdot \xi)(|\xi|+\hat{v}\cdot \xi)}      
  \] 
  \[
\times  \p_\tau  \varphi_{m;-10M_t}(t_a-\tau) \hat{f}(\tau, \xi, v)d \xi d v d\tau\big| + \big| \int_{\R^3} \int_{\R^3} e^{i  X(t_a)\cdot \xi + i(t_a-\tau)|\xi |  } \frac{  V_3(t_a ) (\hat{v}_3 \hat{v}\cdot \xi - \xi_3) \varphi_{l;\bar{l}}(\tilde{v}\times \tilde{\xi})   \varphi_{\tilde{k}}(\slashed \xi)  \varphi_{k }(\xi) \varphi_{j }(v)  }{|\xi|(|\xi| +  \hat{V}(t_a)\cdot \xi)(|\xi|+\hat{v}\cdot \xi)} \]
\be 
\times  \varphi_{m;-10M_t}(t_a-\tau)\hat{f}(\tau, \xi, v)  d \xi d v \big|_{\tau=0}^{t_a}\big|,
\ee
 \[
\tilde{\mathcal{H}}^{j;a}_{k, \tilde{k};m,l}(t_1,t_2) =  \int_0^{t_a}  \int_{\R^3} \int_{\R^3} e^{i  X(t_a)\cdot \xi + i(t_a-\tau)|\xi |- i \tau \hat{v}\cdot \xi }  \frac{  V_3(t_a ) (\hat{v}_3 \hat{v}\cdot \xi - \xi_3) \varphi_{l;\bar{l}}(\tilde{v}\times \tilde{\xi})   \varphi_{\tilde{k}}(\slashed \xi)  \varphi_{k }(\xi) \varphi_{j }(v)   }{|\xi|(|\xi| +  \hat{V}(t_a)\cdot \xi)(|\xi|+\hat{v}\cdot \xi)}   \]
\be\label{2022jan19eqn31}
\times \varphi_{m;-10M_t}(t_a-\tau) \p_\tau \hat{g}(\tau, \xi, v)    d \xi d v d\tau.
\ee

From the Kirchhoff's formula in Lemma \ref{Kirchhoff}, we can represent  $ \widetilde{End}^{j_1,j_2;a}_{k, \tilde{k};m,l}(t_1,t_2)  $   in terms of kernel as follows, 
\[ 
 \big| \widetilde{End}^{j ;a}_{k, \tilde{k};m,l}(t_1,t_2)  \big|\lesssim 1+ \big|   \int_{\R^3} \int_{\R^3} \tilde{\mathcal{K}}^{ 0}_{k, \tilde{k};l}(y, v, V(t_a)) f(t_a, X(t_a)-y, v)  \varphi_{j }(v) dy d v\big| + \big|\int_0^{t_a}  \int_{\R^3} \int_{\R^3} \int_{\mathbb{S}^2}   \p_\tau  \varphi_{m;-10M_t}(t_a-\tau) 
\]
 \be\label{jan19eqn36}
 \times \big( \tilde{\mathcal{K}}^{ 0}_{k, \tilde{k};l}(y, v, V(t_a))+ (t_a-\tau) \tilde{\mathcal{K}}^{1}_{k, \tilde{k};l}(y, v, V(s_a), \omega )  \big)   \varphi_{j}(v) {f}(\tau, X(t_a)-y+(t_a-\tau)\omega , v)d \omega d y  d v d\tau \big|, 
\ee
where    the  kernels $\tilde{\mathcal{K}}^{0}_{k, \tilde{k};l}(y, v, V(t_a))$ and $\tilde{\mathcal{K}}^{1}_{k, \tilde{k};l}(y, v, V(t_a), \omega)$ are defined as follows, 
\begin{multline}\label{2022jan19eqn40}
\tilde{\mathcal{K}}^{0}_{k, \tilde{k};l}(y, v, V(t_a)) : =\int_{\R^3} e^{i y\cdot \xi}  \frac{ V_3(t_a ) (\hat{v}_3 \hat{v}\cdot \xi - \xi_3) }{|\xi|(|\xi| +  \hat{V}(t_a)\cdot \xi)(|\xi|+\hat{v}\cdot \xi)}     \varphi_{l;\bar{l}}(\tilde{v}\times \tilde{\xi})   \varphi_{\tilde{k}}(\slashed \xi) \varphi_k(\xi) d \xi, \\ 
\tilde{\mathcal{K}}^{1}_{k, \tilde{k};l}(y, v, V(t_a), \omega) : =\int_{\R^3} e^{i y\cdot \xi}  \frac{  V_3(t_a )  (|\xi|+\xi\cdot \omega) (\hat{v}_3 \hat{v}\cdot \xi - \xi_3) }{|\xi|(|\xi| +  \hat{V}(s_a)\cdot \xi)(|\xi|+\hat{v}\cdot \xi)}   \varphi_{l;\bar{l}}(\tilde{v}\times \tilde{\xi})   \varphi_{\tilde{k}}(\slashed \xi) \varphi_k(\xi) d \xi. 
\end{multline}

Recall (\ref{sRVM}). We have
 \[
\p_t g(t,x-t\hat{v}, v) = (\p_t +\hat{v}\cdot\nabla_x) f(t,x,v) =-  \nabla_v\cdot \big(E(t,x) f(t,x,v) \big). 
 \]
 Hence, from the above equality,   the Kirchhoff's formula in Lemma \ref{Kirchhoff},  after doing integration by parts in $v$,  we can represent  $  \tilde{\mathcal{H}}^{j;a}_{k, \tilde{k};m,l}(t_1,t_2)$   in terms of kernel as follows, 
\[
 \tilde{\mathcal{H}}^{j ;a}_{k, \tilde{k};m,l}(t_1,t_2)  =  \int_0^{t_a}  \int_{\R^3} \int_{\R^3} \int_{\mathbb{S}^2}  E(\tau,X(t_a)-y+(t_a-\tau)\omega)\cdot \nabla_v\big[\big(\tilde{\mathcal{K}}^{ 0}_{k, \tilde{k};l}(y, v, V(t_a)) + (t_a-\tau) \tilde{\mathcal{K}}^{1}_{k, \tilde{k};l}(y, v, V(t_a)) \big)  \varphi_{j}(v) \big] \] 
 \be\label{jan19eqn35}
 \times   \varphi_{m;-10M_t}(t_a-\tau) f(\tau, X(t_a)-y+(t_a-\tau)\omega, v )    d \omega d y d v  d \tau.
\ee
Recall (\ref{2022jan19eqn40}). After doing integration by parts in $\xi$ in $v$ direction and directions perpendicular to $v$,  we have 
\[
|\tilde{\mathcal{K}}^{ 1}_{k, \tilde{k};l}(y, v, V( t_a), \omega)| + 2^k|\tilde{\mathcal{K}}^{ 0}_{k, \tilde{k};l}(y, v, V(t_a))|
\]
\be\label{june13eqn41}
 \lesssim 2^{2 k+l+ \gamma M_t+150\epsilon M_t}  (1+2^{k-30\epsilon M_t}|y\cdot \tilde{v}|)^{-N_0^3}(1+2^{k+l-30\epsilon M_t}|y\times  \tilde{v}|)^{-N_0^3}, 
\ee 
\[
 \big(|\nabla_v \tilde{\mathcal{K}}^{ 1}_{k, \tilde{k};l}(y, v, V(t_a), \omega)| +  2^k|\nabla_v \tilde{\mathcal{K}}^{ 0}_{k, \tilde{k};l}(y, v, V(t_a))| \big)\varphi_j(v)
\]
\be\label{2022jan19eqn61}
 \lesssim 2^{ 2k +l + \gamma M_t+150\epsilon M_t}  2^{-j-l}    (1+2^{k-30\epsilon M_t}|y\cdot \tilde{v}|)^{-N_0^3}(1+2^{k+l-30\epsilon M_t}|y\times  \tilde{v}|)^{-N_0^3} \varphi_j(v).
\ee

Recall (\ref{jan19eqn36}). Similar to the decomposition we did in (\ref{june14eqn59}) and the obtained estimate (\ref{2022jan19eqn21}), we can also rule out the case when the angle between  $\omega$ and $\pm v$ is relatively big, i.e., the error type. As a result,   from the   estimate  of kernels in (\ref{june13eqn41}) and the estimate (\ref{outsidemajority}) if $|\slashed v|\geq 2^{(\alpha^{\star}+\epsilon)M_t}$,  we have
\[
 \big| \widetilde{End}^{j;a}_{k, \tilde{k};m,l}(t_1,t_2)  \big|\lesssim 1 + 2^{\gamma M_t+  400\epsilon M_t}  \min\{2^{k+l-j}, 2^{-2k-l+\alpha^{\star}M_t+2j} \}  + \big|\int_0^{t_a}  \int_{\R^3} \int_{\R^3} \int_{\mathbb{S}^2}  \varphi_{j}(v)  \p_\tau  \varphi_{m;-10M_t}(t_a-\tau)
\]
\[
 \times \big( \tilde{\mathcal{K}}^{ 0}_{k, \tilde{k};l}(y, v, V(t_a))+ (t_a-\tau) \tilde{\mathcal{K}}^{ 1}_{k, \tilde{k};l}(y, v, V(s_a), \omega )  \big)  {f}(\tau, X(t_a)-y+(t_a-\tau)\omega , v) \phi_{\textup{ess}}( \omega,  v ) d \omega d y  d v d\tau \big|.
\]
From the above estimate and  the Jacobian of changing coordinates $(y_1, y_2, \theta)\longrightarrow X(t_a)-y+(t_a-\tau) \omega$ for the above integral with the kernel $\tilde{\mathcal{K}}^{ 1}_{k, \tilde{k};l}(y, v, V(s_a), \omega )$, where $\omega=(\sin\theta \cos\phi, \sin \theta \sin\phi, \cos\theta)$,  we have
\[
|  \widetilde{End}^{j ;a}_{k, \tilde{k};m,l}(t_1,t_2)  |\lesssim 1+2^{\gamma M_t+  400\epsilon M_t} 2^{\alpha^{\star}M_t/3} + 2^{\gamma M_t+  400\epsilon M_t} \min\{2^{k-j}, 2^{-k-l+2\alpha^{\star}M_t+j}  (2^{m+2l}+2^{-k})\big)\}
\]
\be\label{june14eqn55}
\lesssim   1 + 2^{\gamma M_t+\alpha^{\star}M_t+  400\epsilon M_t}   \lesssim 2^{1.8\gamma M_t}. 
\ee

Now, we focus on the estimate of $\tilde{\mathcal{H}}^{j ;a}_{k, \tilde{k};m,l}(t_1,t_2)$. Recall (\ref{jan19eqn35}). After doing dyadic decomposition for the size of $\slashed v$,   from the estimate of kernels in (\ref{2022jan19eqn61}),  we have
\be\label{2022jan20eqn31}
|\tilde{\mathcal{H}}^{j ;a}_{k, \tilde{k};m,l}(t_1,t_2)|\lesssim  \sum_{\tilde{j}\in [0, j+2]\cap \Z} |\tilde{\mathcal{H}}^{\tilde{j}, j ;a}_{k, \tilde{k};m,l}(t_1,t_2)|, 
\ee
\[
 \tilde{\mathcal{H}}^{\tilde{j}, j ;a}_{k, \tilde{k};m,l}(t_1,t_2):=   \int_0^{t_a} \int_{\R^3} \int_{\R^3}\int_{0}^{2\pi} \int_{0}^{\pi} 2^{  k-j+ \gamma M_t+200\epsilon M_t } \big(2^{m+k} + 1 \big)   \varphi_{m;-10M_t}(t_a-\tau) \| E(\tau,\cdot)\|_{L^\infty_x}  (1+2^{k-30\epsilon M_t}|y\cdot \tilde{v}|)^{-N_0^3} \]
\be\label{june13eqn51}
  \times (1+2^{k+l-40\epsilon M_t}|y\times  \tilde{v}|)^{-N_0}  f(\tau, X(t_a)-y+(t_a-\tau)(\sin\theta\cos\phi,\sin\theta\sin\phi,\cos\theta), v )  \varphi_{\tilde{j}}(\slashed v ) \varphi_{j}(v)  \sin \theta d \theta d \phi  dy d vd \tau. 
\ee

Similar to the obtained estimate  (\ref{june14eqn114}), by using the cylindrical symmetry of solution, the following estimate holds from changing coordinate $\theta\rightarrow X_3(t_a)-y_3 + (t_a-\tau)\cos\theta$,  the space-time estimate (\ref{jan12eqn36}) in Lemma \ref{spacetimeest} and the rough estimate of the electric field (\ref{june2eqn71}) in Lemma \ref{roughgeneral}, 
\[
 \big|  \tilde{\mathcal{H}}^{\tilde{j}, j ;a}_{k, \tilde{k};m,l}(t_1,t_2)\big|\lesssim   \int_0^{t_a} \int_{\R^3} \int_{\R^3}\int_{0}^{2\pi} \int_{0}^{\pi} 2^{  \gamma  M_t+210\epsilon M_t }   2^{-j-l}  \big(2^k(t_a-\tau) + 1 \big)   \| E(\tau,\cdot)\|_{L^\infty_x}  \varphi_{\tilde{j}}(\slashed v ) \varphi_{j}(v) \varphi_{m;-10M_t}(t_a-\tau)
\]
\[
\times \frac{ (1+2^{k+l-40\epsilon M_t}|y_3|)^{-N_0^3}}{|(z_1, z_2)|^{1-2\epsilon}} f(\tau, z_1, z_2, X_3(t_a)-y_3 + (t_a-\tau)\cos\theta)  \sin \theta d \theta d \phi  dz_1 dz_2 dy_3 d vd \tau + 2^{-10M_t}
\]
 \be\label{june14eqn51}
\lesssim    2^{4 {\alpha}^{\star} M_t/3 + M_t/3 +220\epsilon M_t}  2^{-j-2l}    2^{  -2\tilde{j}+j+ \gamma M_t }  + 2^{-10M_t}. 
\ee

Alternatively, from the volume of support of ``$v$'', the estimate (\ref{outsidemajority}) if $|\slashed v|\geq 2^{(\alpha^{\star}+\epsilon)M_t}$,  and the estimate (\ref{june14eqn21}) in Lemma \ref{roughgeneral}, we have
\be\label{june14eqn52}
\textup{(\ref{june13eqn51})} \lesssim 2^{-k- l +\gamma M_t+210\epsilon M_t}  2^{-j-l} \min\{2^{2\tilde{j}+j}, 2^{2\alpha^{\star}M_t + j}\} \int_{0}^{t_a } \| E(\tau, \cdot)\|_{L^\infty_x} d \tau \lesssim  2^{-k- 2l   +(2\gamma +220\epsilon)M_t} \min\{2^{2\tilde{j}}, 2^{2\alpha^{\star}M_t }\}.
\ee

Based on the size of $l$, we split further into two sub-cases as follows. 

$\dagger$\qquad If $l \in [-j + 5M_t/18, -40\epsilon M_t]\cap \Z$.

Note that, for any $(v,\xi)\in supp( \varphi_{\tilde{j}}(\slashed v ) \varphi_{j}(v)   \varphi_{\tilde{k}}(\slashed \xi ) \varphi_{k}(\xi) )$, we have 
\[
\big|\frac{\slashed \xi}{|\xi|}\big| \gtrsim 2^{-30\epsilon M_t}, \quad  |\tilde{v}\times \tilde{\xi}|\sim 2^l, \Longrightarrow 2^{\tilde{j}-j}\sim \big|\frac{\slashed \xi}{|\xi|}\big| \gtrsim 2^{-30\epsilon M_t}-2^{l}\gtrsim  2^{-30\epsilon M_t}.  
\]
 From the above estimate and   the obtained estimate (\ref{june14eqn51}),   we have
\be\label{june14eqn57}
  \big|  \tilde{\mathcal{H}}^{\tilde{j}, j ;a}_{k, \tilde{k};m,l}(t_1,t_2)\big| \lesssim 2^{4  {\alpha}^{\star}  M_t/3 + 4M_t/3 +300\epsilon M_t-2l-2\tilde{j}}\lesssim 2^{2j-2\tilde{j}+4  {\alpha}^{\star}   M_t/3 + 4M_t/3- 5M_t/9 +400\epsilon M_t}     \lesssim 2^{1.8\gamma M_t}.
\ee

$\dagger$\qquad If $l \in [  -40\epsilon M_t,2]\cap \Z$.

After combining the obtained estimates (\ref{june14eqn51}) and  (\ref{june14eqn52}),     we have
\[
  \big|  \tilde{\mathcal{H}}^{\tilde{j}, j ;a}_{k, \tilde{k};m,l}(t_1,t_2)\big| \lesssim \min\{  2^{-k-2l +(2\gamma+220\epsilon)M_t}  \min\{2^{2\tilde{j}}, 2^{2\alpha^{\star}M_t }\},2^{4 {\alpha}^{\star} M_t/3 + 4M_t/3 +200\epsilon M_t-2l- 2\tilde{j} } \}
\]
\be\label{june14eqn58}
\lesssim 2^{-2l-k/2+2{\alpha}^{\star}  M_t/3 + 2M_t/3 +(\gamma+220\epsilon)M_t } \lesssim 2^{1.9\gamma M_t}.  
\ee
Recall (\ref{june14eqn56}) and (\ref{2022jan20eqn31}). After combining the estimates (\ref{june14eqn55}), (\ref{june14eqn57}), and (\ref{june14eqn58}), we have
\be
 \big|H^{j ;2}_{k, \tilde{k};m,l}(t_1,t_2)\big| \lesssim  2^{1.9\gamma M_t+5\epsilon M_t}.  
\ee
Recall the decomposition (\ref{june14eqn63}). Our desired estimate (\ref{june14eqn65}) holds from the above estimate and the estimate (\ref{june14eqn70}). 
 \end{proof}

  Lastly, we give the deferred estimate of $High^{j}_{k, \tilde{k}}(t_1,t_2)$ in the following Lemma. 
 
  \begin{lemma}\label{keylemma3}
Let $j\in [0, (1+\epsilon M_t)]\cap \Z$,  $k\in  [4M_t/5-20\epsilon M_t,3M_t+5\epsilon M_t]\cap \Z$ and $\tilde{k}\in[0, k+2]\cap \Z$, s.t., $\tilde{k}\geq k-30\epsilon M_t$. Under the assumption of Lemma \textup{\ref{bootstraplemma2}}, we have we have
 \be\label{june14eqn72}
 \big| High^{j}_{k, \tilde{k}}(t_1,t_2)\big| \lesssim 2^{(1.9\gamma    +20\epsilon )M_t}. 
\ee
 \end{lemma}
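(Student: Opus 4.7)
The plan is to parallel the treatment of $\mathfrak{High}^{j}_{k,\tilde k}$ given in (\ref{2022jan25eqn21}): reduce $High^{j}_{k,\tilde k}$ to a pointwise-in-$s$ analogue of Lemma \ref{keylemma2} by differentiating the symbol and factoring out $\langle V(s)\rangle^{-1} E(s, X(s))$. Since $\dot V_3(s) = E_3(s,X(s))$ and $\dot{\hat V}(s) = \langle V(s)\rangle^{-1}(I - \hat V(s)\otimes \hat V(s)) E(s,X(s))$, a direct computation analogous to (\ref{2022jan29eqn5}) yields
\[
\p_s\Bigl[\frac{V_3(s)(\hat v_3 \hat v\cdot \xi - \xi_3)}{|\xi|(|\xi| + \hat V(s)\cdot\xi)}\Bigr] = \langle V(s)\rangle^{-1} E(s,X(s))\cdot \mathcal{M}(\xi, v, V(s)),
\]
where $\mathcal{M}$ preserves the three key structural features of the original symbol: the angular factor $\hat v_3 \hat v\cdot \xi - \xi_3$ that is of size $2^{k+l}$ on the relevant angular cone, the resonance denominator $|\xi|+\hat V(s)\cdot \xi$ (now appearing with an extra power), and the cutoffs $\varphi_{\tilde{k}}(\slashed \xi)\varphi_k(\xi)\varphi_j(v)$. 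Repeated integration by parts in $\xi$ in the direction of $v$ and its orthogonal complement therefore yields essentially the same kernel bounds as in (\ref{june13eqn41}) and (\ref{2022jan19eqn61}), up to the additional factor of $\langle V(s)\rangle^{-1}$ carried outside.

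Substituting, we rewrite
\[
High^{j}_{k,\tilde k}(t_1,t_2) = \int_{t_1}^{t_2}\langle V(s)\rangle^{-1} E(s, X(s))\cdot \widetilde{High}^{j}_{k,\tilde k}(s)\,ds,
\]
where $\widetilde{High}^{j}_{k,\tilde k}(s)$ has precisely the same structure as $H^{j;2}_{k,\tilde k}(t_1,t_2)$ from (\ref{june11eqn33}) except for the new symbol $\mathcal{M}$ and the outer time being fixed at $s$ instead of integrated. Rerunning the proof of Lemma \ref{keylemma2} pointwise in $s$ — that is, the dyadic decomposition in $m := \log_2(s-\tau)$ and in $l := \log_2|\tilde\xi\times \tilde v|$, the two-case split at $l = \bar l = -j+5M_t/18$, the integration by parts in $\tau$ with Kirchhoff representation when $l > \bar l$, and the cylindrical-symmetry argument paired with the space-time estimate (\ref{jan12eqn36}) — gives, with only cosmetic modifications coming from the replacement of the symbol by $\mathcal{M}$, the pointwise bound
\[
\sup_{s\in [t_1,t_2]} \big|\widetilde{High}^{j}_{k,\tilde k}(s)\big| \lesssim 2^{1.9\gamma M_t + 10\epsilon M_t}.
\]

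Combining this with the crude bound $\langle V(s)\rangle^{-1}\lesssim 2^{-\gamma M_t}$ coming from the assumption $|V(s)|\sim 2^{\gamma M_t}$ and the $L^1_tL^\infty_x$-bound $\int_{t_1}^{t_2}\|E(s,\cdot)\|_{L^\infty_x}\,ds\lesssim 2^{(1+5\epsilon)M_t}$ from (\ref{june14eqn21}), we obtain
\[
\big|High^{j}_{k,\tilde k}(t_1,t_2)\big|\lesssim 2^{-\gamma M_t}\cdot 2^{1.9\gamma M_t+10\epsilon M_t}\cdot 2^{(1+5\epsilon)M_t} = 2^{(0.9\gamma+1+15\epsilon)M_t}\leq 2^{(1.9\gamma+20\epsilon)M_t},
\]
where the final inequality uses $1-\gamma\leq 3\epsilon$, so that $0.9\gamma+1 = 1.9\gamma + (1-\gamma)\leq 1.9\gamma+5\epsilon$. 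The main obstacle is bookkeeping: one must verify that every kernel estimate and every integration-by-parts step in the proof of Lemma \ref{keylemma2} survives both the replacement of the symbol by $\mathcal{M}$ and the upgrade from a time-integrated to a pointwise-in-$s$ statement. No genuinely new oscillatory-integral ideas are needed; the gain relative to the trivial bound comes entirely from the extra $\langle V(s)\rangle^{-1}$ factor generated by the $\p_s$ differentiation, mirroring the gain exploited in (\ref{2022jan25eqn21}).
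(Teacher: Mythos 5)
Your proposal is correct and follows essentially the same route as the paper: write $High^{j}_{k,\tilde k}(t_1,t_2)=\int_{t_1}^{t_2}\langle V(s)\rangle^{-1}E(s,X(s))\cdot\widetilde{High}^{j}_{k,\tilde k}(s)\,ds$ via the computation (\ref{2022jan29eqn5}), observe that $\widetilde{High}^{j}_{k,\tilde k}(s)$ differs from $H^{j;2}_{k,\tilde k}$ only in the symbol and the fixed evaluation time so that the proof of Lemma \ref{keylemma2} gives the pointwise bound $2^{1.9\gamma M_t+10\epsilon M_t}$, and then conclude with the factor $\langle V(s)\rangle^{-1}\lesssim 2^{-\gamma M_t}$ and the $L^1_tL^\infty_x$ estimate (\ref{june14eqn21}), using $\gamma\geq 1-3\epsilon$ to absorb the exponent. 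This matches the paper's argument, including the final numerology.
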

\begin{proof}

Recall (\ref{june10eqn72}) and  (\ref{2022jan29eqn5}). We have 
\begin{multline}\label{june14eqn74}
High^{j  }_{k, \tilde{k}}(t_1,t_2)= \int_{t_1}^{t_2} \langle V(s)\rangle^{-1} E(s, X(s))\cdot  \widetilde{High}^{j  }_{k, \tilde{k}}(s ) d s, \\ 
 \widetilde{High}^{j  }_{k, \tilde{k}}(s ):=    \int_0^s \int_{\R^3} \int_{\R^3} e^{i X(s) \cdot \xi + i(s-\tau)|\xi |}   \mathfrak{M}(\xi, v, V(s))(\hat{v}_3 \hat{v}\cdot \xi - \xi_3)
   \varphi_{\tilde{k}}(\slashed \xi) \varphi_k(\xi) \varphi_{j}(v)\hat{f}(\tau, \xi, v)  d \xi d v  d \tau.
\end{multline}

Recall (\ref{june11eqn33}). Note that, the difference between $ \widetilde{High}^{j  }_{k, \tilde{k}}(s )$ and $H^{j ;2}_{k, \tilde{k}}(t_1,t_2)$ lies only in the symbols and the characteristic time evaluated, which don't play much role in the proof of  the estimate (\ref{june14eqn65}). With minor modifications, we have
\[
\forall s\in [s_1, s_2], \quad | \widetilde{High}^{j}_{k, \tilde{k}}(s )|\lesssim 2^{1.9 \gamma M_t +10\epsilon M_t}. 
\]
Recall (\ref{june14eqn74}). From the above estimate and the estimate (\ref{june14eqn21}) in Lemma \ref{roughgeneral}, we have 
\be
 \big| High^{j  }_{k, \tilde{k}}(s_1,s_2)\big| \lesssim \int_{s_1}^{s_2} 2^{-\gamma M_t  }\| E(s, \cdot)\|_{L^\infty} 2^{1.9 \gamma M_t +10\epsilon M_t} d s \lesssim 2^{(1.9\gamma    +20\epsilon )M_t}. 
\ee
Hence finishing the proof of our desired estimate (\ref{june14eqn72}).

 \end{proof}

 \end{document}